\newtheorem{thm}{Theorem}[section]
\newtheorem{prop}[thm]{Proposition}
\newtheorem{lem}[thm]{Lemma}
\newtheorem{cor}[thm]{Corollary}
\newtheorem{prop-def}{Proposition-Definition}[section]
\theoremstyle{definition}
\newtheorem{defn}[thm]{Definition}
\newtheorem{remark}[thm]{Remark}
\newtheorem{exam}[thm]{Example}
\newcommand{\nc}{\newcommand}
\nc{\delete}[1]{{}}
\nc{\mmargin}[1]{}
\nc{\Alg}{\mathrm{Alg}}
\nc{\rmH}{\mathrm{H}}
\nc{\DT}{\mathrm{DT}}
\nc{\C}{\mathrm{C}}
\nc{\ac}{\mathrm{\textup{!`}}}
\nc{\mlabel}[1]{\label{#1}}  % Use this to suppress names
\nc{\mcite}[1]{\cite{#1}}  % Use this to suppress names
\nc{\mref}[1]{\ref{#1}}  % Use this to suppress names
\nc{\mbibitem}[1]{\bibitem{#1}} % Use this to show number
	\nc{\mlabel}[1]{\label{#1}  % Use the next two lines to show names
		{\hfill \hspace{1cm}{\bf{{\ }\hfill(#1)}}}}
	\nc{\mcite}[1]{\cite{#1}{{\bf{{\ }(#1)}}}}  % Use this lines to show names
	\nc{\mref}[1]{\ref{#1}{{\bf{{\ }(#1)}}}}  % Use this lines to show names
	\nc{\mbibitem}[1]{\bibitem[\bf #1]{#1}} % Use this to show name
 \font\cyrs=wncyr7
\nc{\vep}{\varepsilon}
\nc{\bin}[2]{ (_{\stackrel{\scs{#1}}{\scs{#2}}})}  %binomial coeff
\nc{\binc}[2]{(\!\! \begin{array}{c} \scs{#1}\\
		\scs{#2} \end{array}\!\!)}  %binomial coeff
\nc{\bincc}[2]{  ( {\scs{#1} \atop
		\vspace{-1cm}\scs{#2}} )}  %binomial coeff
\nc{\oline}[1]{\overline{#1}}
\nc{\mapm}[1]{\lfloor\!|{#1}|\!\rfloor}
\nc{\bs}{\bar{S}}
\nc{\la}{\longrightarrow}
\nc{\ot}{\otimes}
\nc{\rar}{\rightarrow}
\nc{\lon }{\,\rightarrow\,}
\nc{\dar}{\downarrow}
\nc{\dap}[1]{\downarrow \rlap{$\scriptstyle{#1}$}}
\nc{\defeq}{\stackrel{\rm def}{=}}
\nc{\dis}[1]{\displaystyle{#1}}
\nc{\dotcup}{\ \displaystyle{\bigcup^\bullet}\ }
\nc{\hcm}{\ \hat{,}\ }
\nc{\hts}{\hat{\otimes}}
\nc{\hcirc}{\hat{\circ}}
\nc{\lleft}{[}
\nc{\lright}{]}
\nc{\curlyl}{\left \{ \begin{array}{c} {} \\ {} \end{array}
	\right .  \!\!\!\!\!\!\!}
\nc{\curlyr}{ \!\!\!\!\!\!\!
	\left . \begin{array}{c} {} \\ {} \end{array}
	\right \} }
\nc{\longmid}{\left | \begin{array}{c} {} \\ {} \end{array}
	\right . \!\!\!\!\!\!\!}
\nc{\ora}[1]{\stackrel{#1}{\rar}}
\nc{\ola}[1]{\stackrel{#1}{\la}}%${\Bbb Z}$
\nc{\scs}[1]{\scriptstyle{#1}} \nc{\mrm}[1]{{\rm #1}}
\nc{\dirlim}{\displaystyle{\lim_{\longrightarrow}}\,}
\nc{\invlim}{\displaystyle{\lim_{\longleftarrow}}\,}
\nc{\dislim}[1]{\displaystyle{\lim_{#1}}} \nc{\colim}{\mrm{colim}}
\nc{\mvp}{\vspace{0.3cm}} \nc{\tk}{^{(k)}} \nc{\tp}{^\prime}
\nc{\ttp}{^{\prime\prime}} \nc{\svp}{\vspace{2cm}}
\nc{\vp}{\vspace{8cm}}
\nc{\modg}[1]{\!<\!\!{#1}\!\!>}
\nc{\intg}[1]{F_C(#1)}
\nc{\lmodg}{\!<\!\!}
\nc{\rmodg}{\!\!>\!}
\nc{\cpi}{\widehat{\Pi}}
\nc{\ssha}{{\mbox{\cyrs X}}} %sha as product
\nc{\tsha}{{\mbox{\cyrt X}}}
\nc{\shpr}{\diamond}    %Shuffle product
\nc{\labs}{\mid\!}
\nc{\rabs}{\!\mid}
\nc{\RBO}{{\mathrm{RBO}_\lambda}}
\nc{\Sh}{\mathrm{Sh}}
\nc{\RBA}{{\mathrm{RBA}_\lambda}}
\nc{\sgn}{\mathrm{sgn}}
\nc{\rH}{\mathrm{H}}
\nc{\ad}{\mrm{ad}}
\nc{\ann}{\mrm{ann}}
\nc{\Aut}{\mrm{Aut}}
\nc{\bim}{\mbox{-}\mathsf{Bimod}}
\nc{\br}{\mrm{bre}}
\nc{\can}{\mrm{can}}
\nc{\Cont}{\mrm{Cont}}
\nc{\rchar}{\mrm{char}}
\nc{\cok}{\mrm{Coker}}
\nc{\de}{\mrm{dep}}
\nc{\dtf}{{R-{\rm tf}}}
\nc{\dtor}{{R-{\rm tor}}}
\nc{\Div}{{\mrm Div}}
\nc{\Diff}{\mrm{DA}}
\nc{\Diffl}{\mathsf{DA}_\lambda}
\nc{\diffo}{{\mathsf{DO}_\lambda}}
\nc{\alg}{\mathsf{Alg}}
\nc{\End}{\mrm{End}}
\nc{\Ext}{\mrm{Ext}}
\nc{\Fil}{\mrm{Fil}}
\nc{\Fr}{\mrm{Fr}}
\nc{\Frob}{\mrm{Frob}}
\nc{\Gal}{\mrm{Gal}}
\nc{\GL}{\mrm{GL}}
\nc{\Hom}{\mrm{Hom}}
\nc{\Hoch}{\mrm{Hoch}}
\nc{\hsr}{\mrm{H}}
\nc{\hpol}{\mrm{HP}}
\nc{\id}{\mrm{Id}}
\nc{\im}{\mrm{im}}
\nc{\Id}{\mrm{Id}}
\nc{\ID}{\mrm{ID}}
\nc{\Irr}{\mrm{Irr}}
\nc{\incl}{\mrm{incl}}
\nc{\Ker}{\mrm{Ker}}
\nc{\length}{\mrm{length}}
\nc{\NLSW}{\mrm{NLSW}}
\nc{\Lie}{\mrm{Lie}}
\nc{\mchar}{\rm char}
\nc{\mpart}{\mrm{part}}
\nc{\ql}{{\QQ_\ell}}
\nc{\qp}{{\QQ_p}}
\nc{\rank}{\mrm{rank}}
\nc{\rcot}{\mrm{cot}}
\nc{\rdef}{\mrm{def}}
\nc{\rdiv}{{\rm div}}
\nc{\rtf}{{\rm tf}}
\nc{\rtor}{{\rm tor}}
\nc{\res}{\mrm{res}}
\nc{\SL}{\mrm{SL}}
\nc{\Spec}{\mrm{Spec}}
\nc{\tor}{\mrm{tor}}
\nc{\Tr}{\mrm{Tr}}
\nc{\tr}{\mrm{tr}}
\nc{\wt}{\mrm{wt}}
\def\ot{\otimes}
\def\ac{{\rotatebox[origin=c]{180}{!}}}
\nc{\bfk}{{\bf k}}
\nc{\bfone}{{\bf 1}}
\nc{\bfzero}{{\bf 0}}
\nc{\detail}{\marginpar{\bf More detail}
	\noindent{\bf Need more detail!}
	\svp}
\nc{\gap}{\marginpar{\bf Incomplete}\noindent{\bf Incomplete!!}
	\svp}
\nc{\FMod}{\mathbf{FMod}}
\nc{\Int}{\mathbf{Int}}
\nc{\Mon}{\mathbf{Mon}}
 \nc{\sproof}{\noindent{  \textit{Sketch of Proof:} }}
\nc{\remarks}{\noindent{\bf Remarks: }}
\nc{\Rep}{\mathbf{Rep}}
\nc{\Rings}{\mathbf{Rings}}
\nc{\Sets}{\mathbf{Sets}}
\nc{\ob}{\mathsf{Ob}}
\nc\blue{\color{blue}}
\nc{\BA}{{\mathbb A}}   \nc{\CC}{{\mathbb C}}
\nc{\DD}{{\mathbb D}}   \nc{\EE}{{\mathbb E}}
\nc{\FF}{{\mathbb F}}   \nc{\GG}{{\mathbb G}}
    \nc{\LL}{{\mathbb L}}
\nc{\NN}{{\mathbb N}}   \nc{\PP}{{\mathbb P}}
\nc{\QQ}{{\mathbb Q}}   \nc{\RR}{{\mathbb R}}
\nc{\TT}{{\mathbb T}}   \nc{\VV}{{\mathbb V}}
\nc{\ZZ}{{\mathbb Z}}   \nc{\TP}{\widetilde{P}}
\nc{\m}{{\mathbbm m}}
\nc{\cala}{{\mathcal A}}    \nc{\calc}{{\mathcal C}}
\nc{\cald}{\mathcal{D}}     \nc{\cale}{{\mathcal E}}
\nc{\calf}{{\mathcal F}}    \nc{\calg}{{\mathcal G}}
\nc{\calh}{{\mathcal H}}    \nc{\cali}{{\mathcal I}}
\nc{\call}{{\mathcal L}}    \nc{\calm}{{\mathcal M}}
\nc{\caln}{{\mathcal N}}    \nc{\calo}{{\mathcal O}}
\nc{\calp}{{\mathcal P}}    \nc{\calr}{{\mathcal R}}
\nc{\cals}{{\mathcal S}}    \nc{\calt}{{\Omega}}
\nc{\calv}{{\mathcal V}}    \nc{\calw}{{\mathcal W}}
\nc{\calx}{{\mathcal X}}
\nc{\fraka}{{\mathfrak a}}
\nc{\frakb}{\mathfrak{b}}
\nc{\frakC}{\mathfrak{C}}
\nc{\frakg}{{\frak g}}
\nc{\frakl}{{\frak l}}
\nc{\fraks}{{\frak s}}
\nc{\frakB}{{\frak B}}
\nc{\frakm}{{\frak m}}
\nc{\frakM}{{\frak M}}
\nc{\frakp}{{\frak p}}
\nc{\frakW}{{\frak W}}
\nc{\frakX}{{\frak X}}
\nc{\frakS}{{\frak S}}
\nc{\frakt}{{\mathfrak{T}}}
\nc{\frakA}{{\frak A}}
\nc{\frakx}{{\frakx}}
\nc{\red}{\color{red}}
\nc{\RB}{{{}_\lambda\mathfrak{RBA}}}
\nc{\RBinfty}{{{}_\lambda\mathfrak{RBA}_\infty}}
\begin{document}

\title[Homotopy Rota-Baxter   algebras]{The minimal model of  Rota-Baxter operad  with arbitrary  weight}

\dedicatory{Dedicated to Alexander Zimmermann on the occasion of his 60th birthday}

\author{Kai Wang and Guodong Zhou}
\address{School of Mathematical Sciences, Key Laboratory of Mathematics and Engineering Applications (Ministry of Education), Shanghai Key Laboratory of PMMP,  East China Normal University,
 Shanghai 200241,
   P.R.China}

\email{wangkaimath@hotmail.com }

\email{gdzhou@math.ecnu.edu.cn}

\date{\today}

\begin{abstract} This paper investigates    Rota-Baxter   algebras of of arbitrary  weight, that is, associative algebras endowed with   Rota-Baxter operators of arbitrary  weight, from an operadic viewpoint. Denote by $\RB$  the operad of Rota-Baxter  associative algebras   of weight $\lambda$.
A homotopy cooperad is explicitly constructed,      which can be seen as the Koszul dual of $\RB$  as  it is proven  that   the  cobar construction of this homotopy cooperad  is exactly the minimal model of $\RB$. This   enables us to   introduce  the notion  of homotopy  Rota-Baxter   algebras.  The deformation complex of a Rota-Baxter   algebra and the underlying $L_\infty$-algebra structure over it are exhibited as well.
%At last  we show that   our absolute version could also deduce the $L_\infty$-structure for relative version obtained by other authors using derived brackets in case of zero weight.

\end{abstract}

\subjclass[2010]{
18M70   % Algebraic operads, cooperads, and Koszul duality
 17B38  %Yang-Baxter equations and Rota-Baxter operators
16E40   %(co)homology of rings and algebras
%16S80   %deformations of rings
%12H05   %differential algebra
%12H10   %difference algebra
%16W25   %derivations
%16S70   %extensions of rings by ideas
}

\keywords{cohomology,   deformation complex,  homotopy cooperad, homotopy Rota-Baxter   algebra,  Koszul dual,  $L_\infty$-algebra,  minimal model, operad, Rota-Baxter   algebra }

\maketitle

 \tableofcontents

\allowdisplaybreaks

\section*{Introduction}

A general philosophy of deformation theory of mathematical structures, as evolved from ideas of Gerstenhaber, Nijenhuis, Richardson, Deligne, Schlessinger, Stasheff, Goldman,   Millson etc,  is that  the deformation theory of any given
mathematical object can be described  by
a certain differential graded (=dg) Lie algebra or more generally an $L_\infty$-algebra associated to the
mathematical object  (whose underlying complex is called the deformation complex).  This philosophy has been made into a theorem in characteristic zero by Lurie \cite{Lur}  and Pridham \cite{Pri10}, expressed in terms of infinity categories.    It is an important problem   to construct explicitly the dg Lie algebra or $L_\infty$-algebra governing  deformation theory of the mathematical object under consideration.

Another important problem   about  algebraic structures is to study their homotopy versions, just like   $A_\infty$-algebras for  usual associative algebras.    From the perspective of operad theory, specifically, the task is to formulate a cofibrant resolution for the operad of an algebraic structure.  The most desirable outcome  would be providing a minimal model of the operad governing the algebraic structure.  When this operad  is Koszul, there exists a  general theory, the so-called Koszul duality for operads \cite{GK94, GJ94}, which defines a   homotopy version of this algebraic structure via the cobar construction of the Koszul dual cooperad, which, in this case,  is  a minimal model. However, when an  operad   is NOT Koszul, essential difficulties arise and   few examples of minimal models   have been  worked out.
For instance, G\'alvez-Carrillo,  Tonks, and   Vallette \cite{GCTV12}  gave a cofibrant resolution of the Batalin-Vilkovisky operad using inhomogeneous Koszul duality theory. However, their cofibrant resolution is not minimal and in another paper of Drummond-Cole and  Vallette \cite{DCV13}, the authors succeeded in finding a minimal model which is a deformation retract of the cofibrant resolution found in the previous paper.  Dotsenko and Khoroshkin \cite{DK13} constructed cofibrant resolutions for
shuffle monomial operads by the inclusion-exclusion principle and for operads presented by a Gr\"{o}bner
basis \cite{DK10} they suggested a method to find cofibrant resolutions  by deforming those of the corresponding monomial operads.

These two problems, say, describing controlling $L_\infty$-algebras  and constructing homotopy versions,  are closed related. In fact, given a cofibrant resolution, in particular  a minimal model,  of the operad in question, one can form the deformation complex of the algebraic structure and construct its $L_\infty$-structure as explained by Kontsevich and Soibelman \cite{KS00} and  van der Laan \cite{VdL02, VdL03}.  This method has been generalised to properads by Markl \cite{Mar10}, Merkulov and Vallette \cite{MV09a, MV09b}, and to colored props by  Fr\'{e}gier, Markl, and Yau \cite{FMY09}.
However, in practice, a minimal model or a small cofibrant resolution is not known a priori.

In this paper, we resolve completely the above two problems   for  Rota-Baxter associative  algebras of arbitrary weight.
 We found the minimal model and the Koszul dual homotopy cooperad of the operad of Rota-Baxter   algebras of arbitrary weight. Using the method given in Kontsevich and Soibelman \cite{KS00} and  van der Laan \cite{VdL02, VdL03}, we    exhibit  the deformation complex as well as  its $L_\infty$-structure.

 %The algebraic structure  investigated in this paper  is  Rota-Baxter   algebras of arbitrary weight.
%Rota-Baxter   algebras (previously known as  Baxter algebras) originated with the work of    Baxter \cite{Bax60} in his study on probability theory.
%Baxter's work was further investigated by,  among others,   Rota \cite{Rot69}  (hence the name  ``Rota-Baxter   algebras''),   Cartier \cite{Car72} etc. The subject was revived beginning with the  work of Guo et al. \cite{GK00a, GK00b,  Agu01}.
%Nowadays, Rota-Baxter   algebras have  numerous  applications and connections to many  mathematical branches,  to name a few, such as combinatorics \cite{Rot95},  renormalization
%in quantum field theory   \cite{CK00}, multiple zeta values in number theory \cite{GZ08},   operad theory \cite{Agu01, BBGN13},
%Hopf algebras \cite{CK00}, Yang-Baxter equations \cite{Bai07} etc. For basic theory about Rota-Baxter   algebras, we refer the reader to the short introduction \cite{Guo09b} and to  the comprehensive monograph \cite{Guo12}.

Rota-Baxter algebras, formerly referred to as Baxter algebras, emerged from Baxter's research in probability theory \cite{Bax60}. Subsequent to Baxter's work, scholars like Rota \cite{Rot69}, and Cartier \cite{Car72}, among others, delved into this area, hence the appellation ``Rota-Baxter algebras''. From 2000, renewed interest in the subject arose. % with the research of Connes and Kreimer , Guo and Keigher \cite{GK00a, GK00b}, Hoffmann \cite{Hof00},    Aguiar \cite{Agu01}, and  Ebrahimi-Fard \cite{EF02}.
  Connes and Kreimer  \cite{CK00} established a significant connection between Rota-Baxter algebras and mathematical physics by utilizing Hopf algebra techniques in the renormalization of quantum field theory, leading to an algebraic framework for the BPHZ renormalization process \cite{CK00}. Guo and Keigher \cite{GK00a, GK00b} realised free commutative Rota-Baxter algebras via mixable shuffles, which, under the name of quasi-shuffles, are closed related to multi-zeta values as shown by Hoffmann \cite{Hof00}. Semenov-Tian-Shansky demonstrated in \cite{Sem83} that a skew-symmetric solution to the classical Yang-Baxter equation within a Lie algebra is essentially a Rota-Baxter operator of weight zero on this Lie algebra. This correspondence extends to associative algebras, as illustrated by Aguiar \cite{Agu00} and Bai \cite{Bai07}. Moreover,  Ebrahimi-Fard discovered  that a Rota-Baxter algebra induces a dendriform algebra in the sense of Loday \cite{Lod01}.   Presently, Rota-Baxter algebras find myriad applications and connections across various mathematical domains such as combinatorics \cite{Rot95}, multiple zeta values in number theory \cite{GZ08}, operad theory \cite{Agu01, BBGN13}, and Hopf algebras \cite{CK00}. For basic theory of Rota-Baxter algebras, readers are directed to the concise introduction \cite{Guo09b} and the comprehensive monograph \cite{Guo12}. In this paper, we will introduce the concepts of cohomology of Rota-Baxter algebras and their natural homotopy versions. Exploring the applications of these concepts in the aforementioned areas related to Rota-Baxter algebras will be an intriguing question for the future.

The deformation theory  and cohomology theory of  Rota-Baxter   algebras had been absent for a long time despite   the   importance of Rota-Baxter   algebras.  Recently there are some breakthroughs in this direction.
  Tang,  Bai,  Guo and  Sheng \cite{TBGS19} developed the deformation theory  and cohomology theory of $\mathcal{O}$-operators (also called relative Rota-Baxter operators) on Lie algebras, with applications
to  Rota-Baxter Lie algebras in mind.  Das \cite{Das20}  developed a similar theory for Rota-Baxter   algebras of weight zero.  Lazarev,  Sheng and   Tang  \cite{LST21} succeeded  in establishing   deformation theory  and cohomology theory of  relative  Rota-Baxter Lie algebras of weight zero   and found  applications to triangular Lie bialgebras.   They determined the $L_\infty$-algebra that controls deformations of a relative Rota-
Baxter Lie algebra and  introduced  the notion
of a homotopy relative Rota-Baxter Lie algebra.  The same group of authors also related homotopy relative Rota-Baxter Lie algebras and triangular $L_\infty$-bialgebras via  a functorial approach to Voronov's higher derived brackets construction \cite{LST20}.  Later Das and Misha also  determined the $L_\infty$-structures controlling deformation of  Rota-Baxter operators of weight zero  on  associative algebras \cite{DM20}.
These work all concern    Rota-Baxter  operators of weight zero.

A recent paper by  Pei,  Sheng,  Tang and   Zhao \cite{PSTZ19} considered  cohomologies of crossed homomorphisms for Lie algebras and they found a dg Lie algebra controlling deformations of crossed homomorphisms. Another  exciting progress in this subject is the introduction of the  notion of Rota-Baxter Lie groups by Guo, Lang and Sheng \cite{GLS21}; as a successor to  this work, Jiang, Sheng and Zhu considered cohomology of Rota-Baxter operators of weight $1$ on Lie groups and Lie algebras and relationship between them \cite{JS21, JSZ21}.   Das \cite{Das21} investigated cohomology of Rota-Baxter operators of arbitrary weight on associative algebras.  It seems that these are the only papers which investigates Rota-Baxter operators of nonzero weight (for a related work on differential algebras of nonzero weight, see \cite{GLSZ20}).   In these papers, the authors   dealt with  the deformations of only the Rota-Baxter operators with the Lie algebra or associative algebra structure unchanged. %There are some other related work  \cite{TSZ20, THS21, Das20b,Das20c,Das20d, DG21a, DG21b,JS21, LLS21}.

The goal of the present paper is to study simultaneous deformations of Rota-Baxter operators of nonzero weight and of    associative algebra structures.  One of the reasons is that when  one structure remains undeformed, the homotopy version obtained could not be a minimal model  of the operad of Rota-Baxter Lie algebras or Rota-Baxter associative  algebras.

\medskip

Several remarks are in order.

We worked out the minimal model for Rota-Baxter   algebras  in a  way which is somehow converse to the classical approach.
  Grosso modo, our method is as follows: Given an algebraic structure on a space $V$ realised as an algebra over an operad, by considering the  formal deformations of this algebraic structure, we first  construct the deformation complex  and using ad hoc method,  find an $L_\infty$-structure on the underlying graded space of this complex such that   the Maurer-Cartan elements are in bijection with the algebraic structures on $V$;
when $V$ is graded, we define a homotopy version of this algebraic structure as Maurer-Cartan elements in the $L_\infty$-algebra constructed above; finally under suitable conditions,  we could  show that the operad governing the homotopy version is a minimal model of the original operad.
However, this paper, as a completely new version of  \cite{WZ21},  is written   in operadic language which does not reflect completely our original method. For instance, in the  previous version    \cite{WZ21}, our proof of the  constructed $L_\infty$-structure is elementary and ad hoc.
For the reader less prepared in operad theory, we suggest to  her/him the reading of  \cite{WZ21}.

It should be mentioned that there is another way to derive the $L_\infty$ structure in the literature, say, the derived bracket technique  \cite{Kos04, Vor05a, Vor05b}.
The above mentioned papers of Sheng et al.  use this method as a main tool.

It might be appropriate to explain  here the relationship of our result with the   paper of  Dotsenko and  Khoroshkin \cite{DK13}. In that paper, the authors tried to deform the minimal model of the corresponding monomial operads obtained by Gr\"{o}bner basis of  the Rota-Baxter operad and they got the generators of the operad of homotopy Rota-Baxter   algebras. It seems that it is not easy to obtain all the relations.
While our generators of homotopy Rota-Baxter   algebras are the same, we could determine all the relations in an indirect way with the aid  of  the  $L_\infty$-structure on the deformation complex  found initially using our ad hoc method. However, it is fair to say that   our method to verify the minimal model has been  inspired from Dotsenko and  Khoroshkin \cite{DK13}.  Dotsenko kindly pointed out another proof based on the paper \cite{DK13}; see Remark~\ref{rem: dotsenko}.

%As asked  by Vallette,  a more direct method would be the following:  Consider the bar construction of the operad of Rota-Baxter   algebras, compute its homology and  derive the structure of homotopy cooperad on its homology by using  homotopy transfer technique. The main technical difficulties lie in two points: the first one is to determine suitable representatives for each cohomology class (this problem has been solved before by  Dotsenko and  Khoroshkin \cite{DK13} for Rota-Baxter   algebras); the second one is to establish the deformation retract datum which enabling the use of homotopy transfer technique, this point being rather involved. We are working on this method with Chen  by making use of algebraic Morse theory \cite{Sko06, Koz05, JW09, CLZ21}.

  There remain several   problems unsolved so far. It is still  open how to define  homotopy morphisms  between homotopy Rota-Baxter   algebras and establish a homotopy version of the descendent property for Rota-Baxter   algebras (see Propositions~\ref{Prop: new RB algebra} and \ref{Prop: new-bimodule}), that is, to found the minimal model of the corresponding coloured operad. We also need to define homotopy Rota-Baxter bimodules over homotopy Rota-Baxter   algebras.  These problems will be attacked in a forthcoming paper.

It would be an interesting problem  to give a general approach for  operated algebras in the sense of Guo \cite{Guo09a} and other algebraic structures.     There are at least two interesting  concrete problems.
  Loday \cite{Lod10} asked to develop a Koszul duality theory for differential algebras with  nonzero weight, whose  operad  is not Koszul. Using our method,  we succeeded in finding the minimal model of the operad  of differential algebras with nonzero weight \cite{CGWZ}, thus   continuing \cite{GLSZ20} and answering  the question of Loday \cite{Lod10}.
  Another problem is  to show that the (coloured) operad of  (relative) homotopy Rota-Baxter Lie algebras
introduced by   Lazarev,  Sheng and   Tang  \cite{LST21} is the minimal model of that of (relative) Rota-Baxter Lie algebras. We are working on this   project \cite{CQWZ}.

\medskip

This paper is organised as follows. In the first section, we collection relevant notions and facts about $L_\infty$-algebras and homotopy (co)operads scattered in the literature in order to fix the notations and we also present an account about classical theory of Rota-Baxter   algebras.
 In the second section
  a homotopy cooperad is introduced and in the third section,  the  cobar construction of this homotopy cooperad   is shown to be  the minimal model of the operad of Rota-Baxter   algebras of arbitrary weight, so this homotopy cooperad can be considered as the Koszul dual homotopy cooperad of the operad of Rota-Baxter   algebras.  The notion of homotopy Rota-Baxter   algebras of arbitrary weight is made explicit in the fourth section.
In the fifth section,  the deformation complex and its $L_\infty$-algebra structure are derived.
 In the sixth section, the   cohomology theory of  Rota-Baxter   algebras introduced in \cite{Das20, Das21, DM20, WZ21} is recovered and an example is also included.

Throughout this paper, let $\bfk$ be a field of characteristic $0$.  All vector spaces are defined over $\bfk$,  all  tensor products and Hom-spaces  are taken over $\bfk$. We assume that the reader is familiar with the theory of operads \cite{MSS02, LV12, BD16}.

\bigskip

\section{Preliminaries}

%\subsection{Notations}\

% Throughout this paper, let $\bfk$ be a field of characteristic $0$.  All vector spaces are defined over $\bfk$,  all  tensor products and Hom-spaces  are taken over $\bfk$.

% A   (homologically) graded   vector space  is a family of vector spaces $V=\{V_n\}_{n\in \ZZ}$  indexed by integers.   Elements of $\cup_{n\in \ZZ} V_n$ are called homogeneous and
%the  degree  of $v\in V_n$ is written as $|v|:=n$.

% We use both homological and cohomological  gradings. For a homologically graded   space $V=\bigoplus_{n\in \mathbb{Z}} V_n$, write $V^n=V_{-n}$ will transform homological grading to cohomological grading and vice versa.

 \subsection{$L_\infty$-algebras and Maurer-Cartan elements}\ \label{Subsect: DGLAs and Linfinity algebras}

In this subsection, we will recall some preliminaries on differential graded Lie algebras and $L_\infty$-algebras. For more background  on differential graded Lie algebras and $L_\infty$-algebras, we refer the reader to \cite{Sta92, LS93, LM95,Get09}.
%Now, let's recall some basics on $L_\infty$-algebras.

%Let $V$ be a graded vector space. Define the graded symmetric algebra $S(V)$ of $V$ to be $T(V)/I$ where the two-sided ideal $I$ is generated by
%$x\ot y -(-1)^{|x||y|}y\ot x$ for all homogeneous elements $x, y\in V$.
%For homogeneous elements $x_1,\dots,x_n \in V$ and $\sigma\in S_n$, the Koszul sign $\epsilon(\sigma;  x_1,\dots, x_n)$ is defined by
%\begin{equation} \label{Eq: epsilon sign} x_1\odot x_2\odot\dots\odot x_n=\epsilon(\sigma;  x_1,\dots,x_n)x_{\sigma(1)}\odot x_{\sigma(2)}\odot\dots\odot x_{\sigma(n)}\in S(V),\end{equation}
%where $\odot$ is the multiplication in $S(V)$;
%For instance, $x\cdot y=(-1)^{|x||y|}y\cdot x$, so $\epsilon((1\ 2), x, y)=(-1)^{|x||y|}.$
%we also define
%\begin{equation}\label{Eq: chi sign} \chi(\sigma;  x_1,\dots,x_n)=\sgn(\sigma)\epsilon(\sigma;  x_1,\dots,x_n),\end{equation}
%where $\sgn(\sigma)$ is the sign of the permutation $\sigma$.

 We shall use the homological grading and employ everywhere the Koszul rule to determine  signs.
For a graded space  $V=\{V_n\}_{n\in \ZZ}$,  its suspension $sV$ is defined to be   $ (sV)_p=V_{p-1}, p\in \ZZ$  and its desuspension $s^{-1}V$ is   $ (s^{-1}V)_p=V_{p+1}$.
For homogeneous elements $x_1,\dots,x_n \in V$ and $\sigma\in S_n$, the Koszul sign $\varepsilon(\sigma;  x_1,\dots, x_n)$ is defined by
\begin{equation} \label{Eq: epsilon sign} x_1  x_2  \dots  x_n=\varepsilon(\sigma;  x_1,\dots,x_n)x_{\sigma(1)}  x_{\sigma(2)} \dots  x_{\sigma(n)}\end{equation}
in the graded symmetric algebras generated by $V$,
and introduce also \begin{equation} \label{Eq: chi sign} \chi(\sigma;  x_1,\dots,x_n)=\sgn(\sigma)\varepsilon(\sigma;  x_1,\dots,x_n),\end{equation}
where $\sgn(\sigma)$ is the sign of $\sigma$.

\begin{defn}\label{Def:L-infty}
	Let $L=\bigoplus\limits_{i\in\mathbb{Z}}L_i$ be a graded space over $\bfk$. Assume that $L$ is endowed with a family of graded linear operators $l_n:L^{\ot n}\rightarrow L, n\geqslant 1$ with  $|l_n|=n-2$, subject to  the following conditions:
	for arbitrary  $n\geqslant 1$,  $ \sigma\in S_n$ and $x_1,\dots, x_n\in L$,
	\begin{enumerate}
		\item[(i)](generalised anti-symmetry) $$l_n(x_{\sigma(1)}\ot \cdots \ot x_{\sigma(n)})=\chi(\sigma; x_1,\dots,   x_n)\ l_n(x_1 \ot \cdots  \ot x_n);$$

		\item[(ii)](generalised Jacobi identity)
		$$\sum\limits_{i=1}^n\sum\limits_{\sigma\in \Sh(i,n-i)}\chi(\sigma; x_1, \dots, x_n)(-1)^{i(n-i)}l_{n-i+1}(l_i(x_{\sigma(1)}\ot \cdots \ot x_{\sigma(i)})\ot x_{\sigma(i+1)}\ot \cdots \ot x_{\sigma(n)})=0,$$
		where    $\Sh(i,n-i)$ is the set of $(i,n-i)$
		shuffles, that is, permutations $\sigma\in S_n$ such that
$$\sigma(1)<\cdots< \sigma(i)  \ \mathrm{and}\ \sigma(i+1)<\cdots< \sigma(n).$$
	\end{enumerate}
	Then $(L,\{l_n\}_{n\geqslant1})$ is called an $L_\infty$-algebra.
\end{defn}

\begin{remark} \label{Rem: L-infinity for small n}   Let us consider the generalised Jacobi identity for   $n\leqslant 3$ with the assumption of generalised anti-symmetry.
	
	\begin{enumerate}
		\item[(i)]  $n=1$,  $l_1\circ l_1=0$, that is,  $l_1$ is a differential,

		\item[(ii)]  $n=2$, $l_1\circ l_2=l_2\circ (l_1\ot\Id+\Id\ot l_1)$, that is   $l_1$ is a derivation for  $l_2$,

		\item[(iii)] $n=3$, for homogeneous elements $x_1, x_2, x_3\in L$
		$$\begin{array}{ll} &l_2(l_2(x_1\ot x_2)\ot x_3)+(-1)^{|x_1|(|x_2|+|x_3|)} l_2(l_2(x_2\ot x_3)\ot x_1)+
			(-1)^{|x_3|(|x_1|+|x_2|)} l_2(l_2(x_3\ot x_1)\ot x_2)
			\\
			=&-\Big(l_1(l_3(x_1\ot x_2\ot x_3))+ l_3(l_1 (x_1)\ot x_2\ot x_3 )+(-1)^{|x_1|} l_3(x_1\ot l_1 (x_2)\ot x_3 )+\\
			&(-1)^{|x_1|+|x_2|} l_3(x_1\ot x_2\ot l_1 (x_3) )\Big),\end{array}$$
		that is, $l_2$ satisfies the   Jacobi identity up to homotopy.
	\end{enumerate}
	
	In particular,    if all   $l_n=0$ with  $n\geqslant 3$, then $(L,l_1,l_2)$ is just a dg Lie algebra.
	
\end{remark}

\begin{defn}\label{Def: weakly complete L-infinity algebra}\cite{ LST21}A weakly filtered $L_\infty$-algebra is a pair $(L, \mathcal{F}_\bullet L)$, where $L$ is an $L_\infty$-algebra and $\mathcal{F}_\bullet L$ is a descending filtration  $L=\mathcal{F}_1L\supset \cdots \supset \mathcal{F}_nL\supset \cdots$  of graded subspaces in $L$ satisfying
	\begin{itemize}
		\item[(1)]there exists $n\geqslant 1$ such that $l_k(L^{\ot k})\subset \mathcal{F}_kL$ holds for all $k\geqslant n$.
		\item[(2)] the graded space $L$ is complete with respect to this filtration, i.e., there is an isomorphism of graded spaces $L\cong \varprojlim\limits_n ( L/\mathcal{F}_n L)$.
		\end{itemize}
	\end{defn}

One can also define Maurer-Cartan elements in  weakly complete  $L_\infty$-algebras.
\begin{defn}\cite{ LST21}\label{Def: MC element in L infinity algebra}
	Let $(L,\{l_n\}_{n\geqslant1},\mathcal{F}_\bullet L)$ be a   weakly filtered  $L_\infty$-algebra. An element $\alpha\in L_{-1}$ is called a Maurer-Cartan element if it satisfies the Maurer-Cartan equation:
	\begin{eqnarray}\label{Eq: mc-equation}\sum_{n=1}^\infty\frac{1}{n!}(-1)^{\frac{n(n-1)}{2}} l_n(\alpha^{\ot n})=0.\end{eqnarray}

\end{defn}

%Lemma~\ref{Lem: twist dgla} can be generalised to $L_\infty$-algebras.
\begin{prop}[Twisting procedure]\cite{ LST21}\label{Prop: deformed-L-infty}
	Given a Maurer-Cartan element $\alpha$ in the  weakly filtered  $L_\infty$-algebra $L$, one can introduce  a new $L_\infty$-structure $\{l_n^\alpha\}_{n\geqslant 1}$ on graded space $L$, where $l_n^{\alpha}: L^{\ot n}\rightarrow L$ is defined as :
	\begin{eqnarray}\label{Eq: twisted L infinity algebra} l^\alpha_n(x_1\ot \cdots\ot x_n)=\sum_{i=0}^\infty\frac{1}{i!}(-1)^{in+\frac{i(i-1)}{2}}l_{n+i}(\alpha^{\ot i}\ot x_1\ot \cdots\ot x_n),\ \forall x_1, \dots, x_n\in L,\end{eqnarray}
%	{\blue (we always assume that  these infinite sums exist, see  Remark~\ref{Rmk: remakrs on L-infinity algebras} (ii) below)}.
	The new $L_\infty$-algebra $(L, \{l_n^\alpha\}_{n\geqslant 1})$ is called the twisted $L_\infty$-algebra (by the Maurer-Cartan element $\alpha$).
\end{prop}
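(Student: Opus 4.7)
The plan is to prove this standard twisting result via the coalgebra picture of $L_\infty$-algebras, where both the Maurer-Cartan condition and the twisting procedure become clean algebraic manipulations.

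First I would recall the well-known correspondence: an $L_\infty$-structure $\{l_n\}_{n\geqslant 1}$ on $L$ is the same data as a square-zero degree $-1$ coderivation $Q$ on the cofree cocommutative coalgebra $\Lambda^c(sL)$, with $l_n$ recovered (up to the shift $s$ and the usual Koszul sign) from the corestriction of $Q$ to cogenerators. Under this dictionary, an element $\alpha\in L_{-1}$ satisfies the Maurer-Cartan equation (1.4) if and only if $Q(e^{s\alpha})=0$, where $e^{s\alpha}=\sum_{n\geqslant 0}\frac{(s\alpha)^{\wedge n}}{n!}$ is the group-like element in the completion of $\Lambda^c(sL)$.

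Next I would introduce a candidate coderivation $Q^\alpha$ by translation along the group-like element, namely $Q^\alpha(\xi):=Q(e^{s\alpha}\cdot \xi)-e^{s\alpha}\cdot Q(\xi)$, and verify that its Taylor components on cogenerators $sx_1\wedge\cdots\wedge sx_n$ reproduce exactly the operators $l_n^\alpha$ of (1.5). The prefactor $\tfrac{1}{i!}$ comes from the $i$-th symmetric power of $s\alpha$ in the expansion of $e^{s\alpha}$, and the sign $(-1)^{in+i(i-1)/2}$ arises from moving the $i$ copies of $s\alpha$ (each of degree $0$) past the inputs $sx_1,\dots,sx_n$ and from the desuspension applied to the output. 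That $Q^\alpha$ is again a coderivation is automatic because multiplication by a group-like element is a coalgebra morphism; generalised anti-symmetry of each $l_n^\alpha$ is inherited from that of $l_{n+i}$ since $x_1,\dots,x_n$ are permuted exactly as before while the $\alpha$'s remain in fixed initial slots.

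Finally I would check $(Q^\alpha)^2=0$, which translates into the generalised Jacobi identities for $\{l_n^\alpha\}_{n\geqslant 1}$. Expanding gives schematically
\begin{equation*}
(Q^\alpha)^2(\xi)\;=\;Q^2\bigl(e^{s\alpha}\cdot\xi\bigr)\;-\;\bigl(Q(e^{s\alpha})\bigr)\cdot Q^\alpha(\xi),
\end{equation*}
and both summands vanish, the first by $Q^2=0$ and the second by the Maurer-Cartan equation $Q(e^{s\alpha})=0$. The main technical obstacle will be the Koszul-sign bookkeeping: matching the paper's homological grading, the suspension $s$, and the symmetric convention for $\Lambda^c$ can cost a sign if handled carelessly, so I would fix conventions at the outset and, for readers less comfortable with the coalgebra formalism, indicate an elementary parallel proof that expands (1.4) and (1.5) and collects terms according to the number of $\alpha$-inputs, with the Maurer-Cartan equation used precisely to cancel the summand whose arguments are entirely $\alpha$'s.
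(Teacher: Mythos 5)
The paper does not actually prove this proposition: it is stated as a known result, with the remark immediately afterwards deferring to \cite[Section 4]{Get09}. So your proposal is not competing with an argument in the text, and the coalgebraic strategy you choose (square-zero coderivation $Q$ on $\Lambda^c(sL)$, Maurer--Cartan elements as $Q(e^{s\alpha})=0$, twisting via the group-like element) is indeed the standard and correct route to this result. However, the execution has two concrete problems.

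First, your candidate $Q^\alpha(\xi):=Q(e^{s\alpha}\cdot\xi)-e^{s\alpha}\cdot Q(\xi)$ is not the twisted coderivation. Using the sum-over-subsets formula for a coderivation one finds $Q(e^{s\alpha}\cdot\xi)=e^{s\alpha}\cdot\widetilde{Q}(\xi)$, where $\widetilde{Q}$ is the (a priori curved) twist; projecting your expression to the cogenerators $sL$ therefore yields Taylor components $l_n^\alpha-l_n$ rather than $l_n^\alpha$, so the promised verification that you ``reproduce exactly the operators $l_n^\alpha$'' fails. Moreover your $Q^\alpha$ is a coderivation along the coalgebra morphism $m_{e^{s\alpha}}$, not a coderivation of $\Lambda^c(sL)$ to itself, so it does not encode an $L_\infty$-structure. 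The correct definition is the conjugation $Q^\alpha:=e^{-s\alpha}\wedge Q(e^{s\alpha}\wedge -)=m_{e^{-s\alpha}}\circ Q\circ m_{e^{s\alpha}}$, which is a genuine coderivation because $m_{e^{s\alpha}}$ is a coalgebra automorphism. Second, with that correct definition the identity $(Q^\alpha)^2=m_{e^{-s\alpha}}\circ Q^2\circ m_{e^{s\alpha}}=0$ holds automatically, with no appeal to the Maurer--Cartan equation; your displayed expansion of $(Q^\alpha)^2$ is not a valid identity for either formula. The Maurer--Cartan equation is needed at a different point: it is equivalent to the vanishing of the arity-zero Taylor component $l_0^\alpha=\mathrm{pr}_{sL}\big(e^{-s\alpha}\wedge Q(e^{s\alpha})\big)$, i.e.\ it guarantees that the twist is a flat (uncurved) $L_\infty$-structure, which is exactly what the proposition asserts. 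Both issues are repairable, but as written the proof does not go through.
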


\begin{remark}{\label{Rmk: remakrs on L-infinity algebras}}

\begin{itemize}
	
	\item[(i)]The signs in Definition~\ref{Def: MC element in L infinity algebra} and Proposition~\ref{Prop: deformed-L-infty} are different from those appearing in \cite{LST21}, as the conventions in \cite{LST21} are essentially about $L_\infty[1]$-algebras \cite{Vor05a, Vit15}.   We refer the reader to \cite{Vit15} for the translation  between $L_\infty$-structures and $L_\infty[1]$-structures.
	\item[(ii)]  The condition of being weakly filtered ensures the convergence of the infinite sums in Definition~\ref{Def: MC element in L infinity algebra} and Proposition~\ref{Prop: deformed-L-infty}.
	
%	\item[(ii)] Proposition~\ref{Prop: deformed-L-infty} is essentially contained in \cite[Section 4]{Get09}.
%	Notice that here  we only ask the existence of the infinite sums, although in many references, nilpotent  or weakly filtered  $L_\infty$ algebras \cite{Get09, LST21} are used to guarantee the convergence of these sums.
	
\end{itemize}

\end{remark}

\bigskip

\subsection{Homotopy  (co)operads}\label{Subsection: Homotopy  (co)operads}\

In this subsection, we collect  some basics  on nonsymmetric homotopy  (co)operads, as they are scattered in several references \cite{Mar96, MV09a, MV09b, DP16}; in particular, we also explain how to obtain $L_\infty$-structures from homotopy operads, in particular, from convolution homotopy operads.

Since we will work with nonsymmetric homotopy (co)operads, we omit the adjective ``nonsymmetric'' everywhere.

  Recall that a graded collection $\calo=\{\calo(n)\}_{n\geqslant 1}$ is a  family of  graded space indexed by positive integers, i.e.,  each $\calo(n)$ itself being a graded space. For any $v\in \calo(n)$, the index $n$ is called the arity of $v$. The suspension of $\calo$, denoted by $s\calo$ is defined to be the graded collection $\{s\calo(n)\}_{n\geqslant 1}$. In the same way, one has the desuspension $s^{-1}\calo$ of the graded collection $\calo$.

We need some preliminaries about    trees.
For any   tree $T$,  denote the weight (=number of vertices)  and arity (=number of leaves)  of $T$ to be $\omega(T)$ and $\alpha(T)$  respectively. A planar tree is said to be reduced if each of its vertices has at least one leaf.  As we  only consider planar reduced  trees,  we also remove the adjectives ``planar reduced'' everywhere.  Write $\mathfrak{T}$ to be the set of all   trees with weight $\geqslant 1$ and for arbitrary $n\geqslant 1$, denote by  $\mathfrak{T}(n)$   the set of trees of weight $n$.
Since  trees are planar, each vertex in a tree has a total order on its inputs which will be  drawn clockwisely.
 By the existence of the root, there is a natural induced total order on the set of all    vertices of a given tree $T\in \mathfrak{T}$, which is given by counting the   vertices  starting from the root clockwisely along the tree.   We call this order the planar order.

 Let $T'$ be a divisor of $T$. Define $T/T'$ to be the tree obtained from $T$ by replacing $T'$ by a corolla (say,  a tree with only one vertex)  of arity $\alpha(T')$. There is a natural permutation $\sigma=\sigma(T, T')\in S_{\omega(T)}$ associated to the pair  $(T, T')$ defined as follows.  Assume the ordered set $\{v_1<\dots<v_n\}$ to be the sequence of all   vertices of $T$ in the planar order and $\omega(T')=j$. Let $v'$ be the vertex in $T/T'$ corresponding to the divisor $T'$ in $T$ and the serial number of $v'$ in $T/T'$ is $i$ in the planar order (so there are $i-1$ vertices ``before'' $T'$). Then define $\sigma=\sigma(T, T')\in S_{n}$ to be the unique permutation which does not permute the vertices $v_1, \dots, v_{i-1}$, and   such that   the  ordered set $\{v_{\sigma(i)}<\dots<v_{\sigma(i+j-1)}\}$ is exactly the planar ordered set of all   vertices  of $T'$ and the ordered set $\{v_{1}<\dots<v_{i-1}<v'<v_{\sigma(i+j)}<\dots< v_{\sigma(n)}\}$ is exactly the  planar ordered   set of all   vertices  in the tree $T/T'$. %Denote the pemutation associated the pair $(T,T')$ by $\sigma(T,T')$.

Let $\calp=\{\calp(n)\}_{n\geqslant 1}$ be a graded collection. Let  $T\in \mathfrak{T}$ and $\{v_1<\dots<v_n\}$ be the set of all   vertices  of $T$ in the planar order. Define $\calp^{\otimes T}$ to be   $\calp(\alpha(v_1))\ot \cdots \ot \calp(\alpha(v_n))$; {  morally an element in $\calp^{\ot T}$ is a linear combination of decorated trees whose underlying tree is the tree $T$ and each vertex  $v_i$ is decorated by an element  of  $\calp(\alpha(v_i))$.}
\begin{defn} \label{Def: homotopy operad}
A homotopy operad structure on a graded collection $\calp=\{\calp(n)\}_{n\geqslant 1}$ consists of a family of operations $$\{m_T: \calp^{\ot T}\rightarrow \calp(\alpha(T))\}_{T\in \mathfrak{T}}$$ with $|m_T|=\omega(T)-2$ such that the equation	
\[\sum_{T'\subset{T}}(-1)^{i-1+jk}\sgn(\sigma(T,T'))\ m_{T/T'}\circ(\id^{\ot {i-1}}\ot m_{T'}\ot \id^{\ot k})\circ r_{\sigma(T,T')}=0\]
holds for any $T\in \mathfrak{T}$, where $T'$ runs through the set of all subtrees of $T$,  $i$ is the serial number of the vertex  $v'$ in $T/T'$, $j=\omega(T')$, $k=\omega(T)-i-j$, and
	  where $r_{\sigma(T,T')}$ denoted  the right action by $\sigma=\sigma(T,T')$, that is,  $$r_\sigma(x_1\ot \cdots\ot x_n)=\varepsilon(\sigma; x_1,\dots, x_{n})x_{\sigma(1)}\ot \cdots  \ot x_{\sigma(n)}.$$	
\end{defn}

 Given two homotopy operads, a strict morphism between them is a morphism of graded collections compatible with all operations $m_T, T\in \mathfrak T$.

 Let $\cali$ be the collection with $\cali(1)=\bfk$ and $\cali(n)=0$ for any $n\ne1$. The collection $\cali$ is endowed with a homotopy operad structure in the natural way, that is, $m_T: \cali(1)\ot \cali(1)\to \cali(1)$ is given by the identity,   when $T$ is the tree with two vertices and one unique leaf, and $m_T$ vanishes otherwise.

  A homotopy operad $\calp$ is called strictly unital if there exists a strict morphism of homotopy operads $\eta: \cali\rightarrow \calp$ such that for each $n\geqslant 1$,  the compositions  $$\calp(n)\cong \calp(n)\ot \cali(1)\xrightarrow{\id\ot \eta}\calp(n)\ot \calp(1)\xrightarrow{m_{T_{1, i}}}\calp(n)$$ and
  $$\calp(n)\cong \cali(1)\ot \calp(n)\xrightarrow{\eta\ot \id}\calp(1)\ot \calp(n)\xrightarrow{m_{T_2}} \calp(n)$$ are identity maps on $\calp(n)$, where $T_{1,i}$  with $1\leqslant i\leqslant n$ is the tree of weight $2$, arity $n$ with its second vertex having arity $1$ and connecting to the first vertex on its $i$-th leaf, and $T_2$ is the tree of weight $2$, arity $n$ whose first vertex has arity $1$. {  Furthermore, for any tree $T$ {with $\omega(T)\ne 2$}, $m_T(\Id^{\ot i-1}\ot \eta\ot \Id^{\ot \omega(T)-i})$ is required to be $0$ for all $1 \leqslant i \leqslant \omega(T)$.}
A strictly unital homotopy operad $\calp$ is called augmented if there exists a strict morphism of homotopy operads $\varepsilon: \calp\rightarrow \cali$ such that $\varepsilon \circ \eta=\id_{\cali}$.

{   If a homotopy operad $\calp$ satisfies $m_T=0$ for all $T\in \frakt$ with $\omega(T)\geqslant 3$, then $\calp$ is just a nonunital dg operad in the sense of Markl \cite{Mar08}.  Let's recall the construction of free operads here. Let $``\ |\ "$ represent the trivial tree with weight $0$. For a graded collection $\calp$, define $\calp^{\ot |\ } = \cali$.  Denote $\mathfrak{T}^+=\{|\}\cup \mathfrak{T}$. Then the unital free graded operad generated by $\calp$ is exactly the graded collection $\oplus_{T\in \mathfrak{T}^+}\calp^{\ot T}$ equipped with the inherent composition operations induced by tree grafting, where $\calp^{\ot |}$ serves as the operadic unit.}

   There is a natural  $L_\infty$-algebra  associated with a homotopy operad $\calp=\{\calp(n)\}_{n\geqslant 1}$.
 Denote $\calp^{\prod}:= \prod\limits_{n=1}^\infty\calp(n)$. For each $n\geqslant 1$, define operations $m_n=\sum\limits_{T\in \mathfrak{T}(n)}m_T: (\calp^{\prod})^{\ot n}\to \calp^{\prod} $
  and $l_n$ is the anti-symmetrization of $m_n$, i.e., $$l_n(x_1\ot \cdots \ot x_n)=\sum\limits_{\sigma\in S_n}\chi(\sigma; x_1,\dots, x_n)m_n(x_{\sigma(1)}\ot \cdots \ot x_{\sigma(n)}).$$

{
\begin{prop}\cite{VdL02, MV09a}
	Let $\calp$ be a homotopy operad. Then $(\calp^{\prod}, \{l_n\}_{n\geqslant 1})$ is an $L_\infty$-algebra. In particular, if $\calp$ is a dg operad, $\calp^{\prod}$ is just a dg Lie algebra.
\end{prop}

}
We shall need a fact about dg operads.
\begin{defn}Let $\calp$ be a (nonunital) dg operad in the sense of Markl \cite{Mar08}.
 For any $f\in \calp(m)$ and $g_1\in\calp(l_1),\dots,g_n\in \calp(l_n) $ with $1\leqslant n\leqslant m$, define
\[f\{g_1,\dots,g_n\}=\sum_{i_j\geqslant l_{j-1}+i_{j-1},n\geqslant j\geqslant 2, i_1\geqslant 1}\Big(\big((f\circ_{i_1} g_1)\circ_{i_2}g_2\big)\dots\Big)\circ_{i_n}g_n.\]
It is called the brace operation on $\calp^{\prod}$. For $f\in \calp(m), g\in \calp(m)$, define
\[[f,g]_{G}=f\{g\}-(-1)^{|f||g|}g\{f\}\in \calp(m+n-1),\]
called the Gerstenhaber bracket of $f$ and $g$.
\end{defn}
 The operation $l_2$ in the dg Lie algebra $\calp^{\prod}$ is exactly the Gerstenhaber bracket defined above.

The brace operation in a dg operad $\calp$ satisfies the following pre-Jacobi identity:
	\begin{prop}[\cite{Ger63, Get93, GV95}]
		For any homogeneous elements $f, g_1,\dots, g_m,  h_1,\dots,h_n$ in $\calp^{\prod}$, we have
		\begin{eqnarray}
			\label{Eq: pre-jacobi}	&&\Big(f \{g_1,\dots,g_m\}\Big)\{h_1,\dots,h_n\}=\\
			\notag &&\quad  \sum\limits_{0\leqslant i_1\leqslant j_1\leqslant i_2\leqslant j_2\leqslant \dots \leqslant i_m\leqslant  j_m\leqslant n}(-1)^{\sum\limits_{k=1}^m(|g_k|)(\sum\limits_{j=1}^{i_k}(|h_j|))}
			f\{h_{1, i_1},  g_1\{h_{i_1+1, j_1}\},\dots,   g_m\{h_{i_m+1, j_m} \}, h_{j_m+1,  n}\}.
		\end{eqnarray}
	
\end{prop}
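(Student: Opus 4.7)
The plan is to prove this pre-Jacobi identity by unfolding both sides in terms of the partial compositions $\circ_i$ and matching summands on the nose. The core observation is that each brace expression $f\{g_1,\dots,g_m\}$ is, by definition, a sum over tuples $(i_1,\dots,i_m)$ of iterated partial compositions, so iterating braces once more simply yields a sum over all ways of inserting the $h_j$'s into the already-constructed tree. The right-hand side is the reorganization of this sum according to which ``subtree'' (the body of $f$, or one of the grafted $g_k$'s) absorbs each $h_j$.

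First I would spell out the left-hand side: expanding $f\{g_1,\dots,g_m\}$ produces, for every increasing insertion sequence, a tree with $m$ copies of the $g_k$'s grafted onto $f$. Inserting the $h_1,\dots,h_n$ into this tree means choosing, for each $h_j$, whether it is grafted on a leaf of $f$ itself (not below any $g_k$) or on a leaf lying inside exactly one $g_k$. Grouping the $h_j$'s by this choice — those below $g_k$ forming a contiguous block $h_{i_k+1,j_k}$ in the planar order, and the remaining $h_j$'s being distributed in blocks $h_{1,i_1}$, $h_{j_1+1,i_2}$, $\dots$, $h_{j_m+1,n}$ between successive $g_k$-slots on $f$ — gives precisely the indexing set on the right-hand side. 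Conversely, each such configuration arises uniquely. This is a purely combinatorial bijection on the two sides, making crucial use of the sequential and parallel axioms $(x\circ_i y)\circ_j z = \pm (x\circ_j z)\circ_{i+\alpha(z)-1}y$ etc.\ to commute the relevant partial compositions so that insertions into distinct $g_k$-subtrees can be performed in the order in which they appear in the brace $g_k\{h_{i_k+1,j_k}\}$.

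The main obstacle, as always with brace-type identities, is sign bookkeeping. The Koszul rule accumulates signs every time a $g_k$ is moved past an $h_j$ during the rearrangement. My strategy is to fix the canonical order on the left as ``insert all $g$'s first, then all $h$'s'' and on the right as ``for each $k$, insert the $g_k\{h_{i_k+1,j_k}\}$ block at its place on $f$, plus the free $h_j$ inserted directly into $f$.'' The discrepancy between these two orders is exactly the transpositions moving each $g_k$ rightward past the $h_j$'s with $j\leq i_k$ (those inserted into $f$ before the $g_k$-slot), which produces the sign $(-1)^{\sum_k |g_k|\sum_{j\leq i_k}|h_j|}$. The signs $(-1)^{i-1+jk}\sgn(\sigma(T,T'))$ arising from the partial compositions cancel in matching pairs because the underlying trees on both sides coincide, so no additional signs appear.

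Finally I would verify the boundary cases $m=0$ (which reduces to $f\{h_1,\dots,h_n\}$ and is tautological) and $n=0$ (where only the configuration $i_k=j_k=0$ survives, giving $f\{g_1,\dots,g_m\}$). With the bijection and the sign computation in hand, termwise equality of the two sides follows and the identity is proved. I note that this is a classical result due to Gerstenhaber, Getzler and Voronov, so the above plan is essentially a recollection of their argument adapted to the current sign conventions.
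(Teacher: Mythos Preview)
The paper does not give its own proof of this proposition: it is stated with citations to Gerstenhaber, Getzler, and Gerstenhaber--Voronov and used as a known fact. Your sketch is essentially the standard combinatorial argument from those references --- expand both sides in iterated partial compositions, set up the bijection between insertion patterns, and track the Koszul sign from commuting each $g_k$ past the $h_j$'s with $j\le i_k$ --- so there is nothing to compare against and your approach is the expected one.
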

In particular, we have
	\begin{eqnarray}
			\label{Eq: pre-jacobi1}
(f \{g\})\{h\}=f\{g\{h\}\}+f\{g, h\}+(-1)^{|g||h|}f\{h, g\}
\end{eqnarray}

Given a dg operad $\calp$, for each tree $T$, one can define  the composition $m_\calp^T:\calp^{\ot T}\to \calp(\alpha(T))$ in $\calp$ along  $T$ as follows:  for $\omega(T)=1$, $m_\calp^T=\Id$; for  $\omega(T)=2$, $m_\calp^T=m_T$;
 for $\omega(T)\geqslant 3$, write $T$ as the grafting  of a subtree $T'$,   whose vertex set is that of $T$ except the last one, with the corolla whose unique vertex is exactly the last vertex of $T$ in the planar order, then define
 $m_\calp^T=m_{T/T'}\circ (m_\calp^{T'}\ot \Id) $, where $m_\calp^{T'}$ is obtained by induction.

\bigskip

Dualizing the definition of homotopy operads, one has the notion of homotopy cooperads.

\begin{defn}Let $\calc=\{\calc(n)\}_{n\geqslant 1}$ be a graded collection. A homotopy cooperad structure on $\calc$ consists of a family of operations $\{\Delta_T: \calc(n)\rightarrow \calc^{\ot T }\}_{T\in\mathfrak{T}}$ with $|\Delta_T|=\omega(T)-2$ such that for any $c\in \calc$, $\Delta_T(c)=0$ for almost all but finitely many $T\in\frakt$ , and  the family of operations $\{\Delta_T\}_{T\in\frakt}$ satisfies the following identity:
	$$\sum_{T'\subset T}\sgn(\sigma(T,T')^{-1})(-1)^{i-1+jk} r_{\sigma(T,T')^{-1}}\circ (\id^{\ot i-1}\ot \Delta_{T'}\ot \id^{\ot k})\circ  \Delta_{T/T'}=0$$
 for any $T\in\frakt$, where  $T', i, j, k$ have the same meanings as for  homotopy operads.
\end{defn}

The graded collection   $\cali$ has a natural  homotopy cooperad structure, that is, $\Delta_T: \cali(1)\to  \cali(1)\ot \cali(1)$ is given by the identity,   when $T$ is the tree with two vertices and one unique leaf, and $\Delta_T$ vanishes otherwise.
A homotopy cooperad $\calc$ is called strictly counital if there exists a strict morphism of homotopy cooperad $\varepsilon: \calc\rightarrow \cali$ such that $$\calc(n)\xrightarrow{\Delta_{T_{1, i}}}\calc(n)\ot \calc(1)\xrightarrow{\id\ot \varepsilon}\calc(n)\ot \cali(1)\cong \calc(n)$$ and $$\calc(n)\xrightarrow{T_2}\calc(1)\ot \calc(n)\xrightarrow{\varepsilon\ot \id}\cali(1)\ot \calc(n)\cong \calc(n)$$ are identity maps on $\calc(n)$, where $T_{1,i}$  with $1\leqslant i\leqslant n$  and $T_2$ are the notations used before. {  Additionally,  for any planar tree $T$ with $\alpha(T)=n,\omega(T)=m\ne 2, 1\leqslant j\leqslant m$, the  composition \\{\small
$\calc(n)\xrightarrow{\Delta_T} \calc(\alpha(v_1))\ot \dots\ot  \calc(\alpha(v_m))\xrightarrow{\id^{\ot j-1}\ot \varepsilon\ot \id^{\ot m-j}}$ $$\quad  \quad\quad\quad\quad\quad\quad\quad \quad\quad\quad\quad \calc(\alpha(v_1))\ot \cdots \ot \calc(\alpha(v_{j-1}))\ot \cali(1)\ot \calc(\alpha(v_{j+1}))\ot \cdots  \ot \calc(\alpha(v_m))$$  }   is required to be $0$, where $v_1,\cdots, v_m$ are vertices of the tree $T$.}

  A homotopy cooperad $\calc$ is called coaugmented if there exists  a strict morphism of homotopy cooperads $\eta:\cali\rightarrow \calc$ such that $\varepsilon\circ \eta=\id_{\cali}$. For a coaugmented homotopy cooperad $\calc$, the graded collection $\overline{\calc}=\Ker(\varepsilon)$ endowed with operations $\{\overline{\Delta}_T\}_{T\in\frakt}$ is naturally a homotopy cooperad, where $\overline{\Delta}_T$ is the  the restriction of operation $\Delta_T$ on $\overline{\calc}$.

A homotopy cooperad  $\cale=\{\cale(n)\}_{n\geqslant 1}$  such that   $\{\Delta_T\}$ vanish for all $\omega(T)\geqslant 3$ is  exactly a noncounital dg cooperad in the sense of Markl \cite{Mar08}.

For  a  (noncounital) dg cooperad $\cale$, on can define  the cocomposition  $\Delta^{T}_\cale:\cale(\alpha(T))\to \cale^{\ot T}$  along a tree $T$ in the dual way as the composition $m^T_\calp$ along $T$ for a dg operad $\calp$.

%In fact, for $\omega(T)=1$, $\Delta^T_\calc=\Id$; for  $\omega(T)=2$, $\Delta^T_\calc=\Delta_T$;
% for $\omega(T)\geqslant 3$, write $T$ as the juxation of a subtree $T'$ with a corolla whose unique vertex is exacly the last vertex of $T$ in the planar order, then define
% $\Delta^T_\calc=(\Delta^{T'}_\calc\ot \Id)\circ \Delta_{T/T'}$, where $\Delta^{T}_\calc$ is obtained by induction.

\begin{prop-def}\cite{MV09a} Let $\calc$ be a homotopy cooperad and $\cale$ be a dg cooperad. Then the graded collection $\calc\ot \cale$ with $(\calc\ot\cale)(n):=\calc(n)\ot \cale(n), \forall n\geqslant 1$ has a natural structure of homotopy cooperad as follows:
	\begin{itemize}
		\item[(i)] For any tree $T\in \frakt$ of weight $1$ and arity $n$, $$\Delta_T^H(c\ot e)=\Delta_T^\calc(c)\ot e+(-1)^{|c|}c\ot d_{\cale}e$$ for arbitrary homogeneous elements $c\in \calc(n), e\in \cale(n)$;
		\item[(ii)]for any tree $T$ of weight $n\geqslant 2$, define $$\Delta_T^H(c\ot e)=(-1)^{\sum\limits_{k=1}^{n-1}\sum\limits_{j=k+1}^n|e_k||c_j|}(c_1\ot e_1)\ot \cdots \ot (c_n\ot e_n)\in (\calc\ot \cale )^{\ot T},$$ with $c_1\ot \cdots\ot c_n=\Delta_T^\calc(c)\in \calc^{\ot T}$  and $e_1\ot \cdots \ot e_n=\Delta^T_\cale(e)\in \cale^{\ot T}$, where $\Delta^T_\cale$ is the cocomposition  in $\cale$  along   $T$.
	\end{itemize}
The new homotopy cooperad  is called the Hadamard product of $\calc$ and $\cale$,  and denoted by $\calc\ot_{\rH}\cale$.
	\end{prop-def}
%Notice that $\calc\ot_{\mathrm{H}} \mathbf{As}^\vee\cong \calc$ as homotopy cooperads for any homotopy cooperad $\calc$, where $\mathbf{As}^\vee$ is the dual of the operad for associative algebras.

Define $\cals=\mathrm{End}_{\bfk s}^c$ to be the graded cooperad whose underlying graded collection is given as $\cals(n)=\Hom((\bfk s)^{\ot n},\bfk s),n\geqslant 1$. Denote $\delta_n$ to be the map in $\cals(n)$ which takes $s^{\ot n}$ to $s$. The cooperad structure is given as $$\Delta_T(\delta_n)=(-1)^{(j-1)(i-1)}\delta_{n-i+1}\ot \delta_i\in (\cals)^{\ot T} $$ for any tree $T$ of weight $2$ whose second vertex is connected with the $j$-the leaf of its first vertex. We also define $\cals^{-1}$ to be the graded cooperad whose underlying graded collection is $\cals^{-1}(n)=\Hom((\bfk s^{-1})^{\ot n},s^{-1})$ for all $n\geqslant 1$ and the cooperad structure is given as $$\Delta_T(\varepsilon_n)=(-1)^{(i-1)(n-i+1-j)}\varepsilon_{n-i+1}\ot \varepsilon_i\in (\cals^{-1})^{\ot T},$$ where $\varepsilon_n\in \cals^{-1}(n)$ is the map which takes $(s^{-1})^{\ot n}$ to $s^{-1}$ and $T$ is the same as before.
 It is easy to see $\cals\ot_{\mathrm{H}}\cals^{-1}\cong \cals^{-1}\ot_{\mathrm{H}}\cals=:\mathbf{As}^\vee$. Notice that for any homotopy cooperad $\calc$, we have $ \calc\ot_\rH \mathbf{As}^\vee\cong\calc\cong \mathbf{As}^\vee\ot_\rH \calc.$

\begin{defn}Let $\calc$ be a homotopy cooperad. Define the operadic suspension (resp. desuspension)  of $\calc$ to be the homotopy cooperad $\calc\ot _{\mathrm{H}} \cals$ (resp. $\calc\ot_{\mathrm{H}}\cals^{-1}$),  denoted as $\mathscr{S}\calc$ (resp. $\mathscr{S}^{-1}\calc$).
	\end{defn}

\begin{defn}
	Let $\calc=\{\calc(n)\}_{n\geqslant 1}$ be a coaugmented homotopy cooperad. The cobar construction of $\calc$, denoted by $\Omega\calc$,  is the free graded operad generated by the graded collection $s^{-1}\overline{\calc}$, endowed with the differential $\partial$ which is lifted from $\partial: s^{-1}\overline{\calc}\to \Omega\calc$ given by   for any $f\in \overline{\calc}(n)$,
	$$\partial(s^{-1}f)=-\sum_{T\in \mathfrak{T}(n)} (s^{-1})^n \circ \Delta_T(f),$$
 \end{defn}

This provides an alternative definition for   homotopy operads. In fact, a   graded collection  $\overline{\calc}=\{\overline{\calc}(n)\}_{n\geqslant 1}$ is a     homotopy cooperad if and only if   the free graded operad generated by $s^{-1}\overline{\calc}$ {    (also called the cobar construction of $\calc=\overline{C}\oplus  \cali$)} is endowed with a differential such that it becomes a dg operad.

\begin{prop}\cite{VdL02,MV09a}
    	Let $\calc$ be a homotopy cooperad and $\calp$ be a dg operad. Then the graded collection $\mathbf{Hom}(\calc,\calp)=\{\Hom(\calc(n), \calp(n))\}_{n\geqslant 1}$ has a natural homotopy operad structure, which is defined in the following way:
	 \begin{itemize}
	 	\item[(i)] For any $T\in \mathfrak{T}$ with $\omega(T)=1$, $$m_T(f)=d_{\calp}\circ f-(-1)^{|f|}f\circ \Delta_T$$ for  $f\in\mathbf{Hom}(\calc,\calp)(n)$;
	 	\item[(ii)] for any $T\in \mathfrak{T}$ with $\omega(T)\geqslant 2$, $$m_T(f_1\ot \cdots \ot f_n)=(-1)^{\frac{n(n-1)}{2}+1+n\sum_{i=1}^n |f_i|}m_{\calp}^T\circ(f_1\ot \cdots\ot f_n)\circ \Delta_T,$$  where $m_\calp^T$ is the composition in $\calp$ along   $T$.
	 \end{itemize}
\end{prop}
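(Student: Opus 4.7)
The plan is to verify directly that the collection $\{m_T\}_{T\in\mathfrak{T}}$ defined in (i) and (ii) satisfies the homotopy operad relation of Definition~\ref{Def: homotopy operad}. I would begin with a quick degree check: for $\omega(T)=1$, both $d_\calp$ and the internal $\calc$-differential $\Delta_T$ have degree $-1=\omega(T)-2$; for $\omega(T)\geqslant 2$ the iterated composition $m_\calp^T$ has degree $0$ (it is assembled from the degree-zero partial compositions of the dg operad $\calp$), while $|\Delta_T|=\omega(T)-2$, so $|m_T|=\omega(T)-2$ as required.

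Fixing a tree $T$ of weight $n$, I would partition the sum
\[\sum_{T'\subset T}(-1)^{i-1+jk}\sgn(\sigma(T,T'))\,m_{T/T'}\circ(\id^{\ot i-1}\ot m_{T'}\ot \id^{\ot k})\circ r_{\sigma(T,T')}\]
into three groups according to the weights of $T'$ and $T/T'$: \textbf{(I)} $\omega(T')=1$; \textbf{(II)} $T'=T$ (so $\omega(T/T')=1$); and \textbf{(III)} $2\leqslant\omega(T')\leqslant n-1$. In (I) the inner $m_{T'}$ is the Hom-differential acting on one factor; in (II) the outer $m_{T/T'}$ is the Hom-differential applied to the single output of $m_T$; in (III) both operations come from the $\omega\geqslant 2$ clause of the definition.

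For groups (I) and (II) I would observe that their $d_\calp$-components assemble to the Leibniz identity expressing that $d_\calp$ is a derivation of every iterated composition $m_\calp^T$, which follows immediately from $\calp$ being a dg operad. The remaining $\calc$-components of (I) and (II), together with all of (III), are then treated uniformly as follows: using the associativity of composition in $\calp$, namely $m_\calp^{T/T'}\circ(\id^{\ot i-1}\ot m_\calp^{T'}\ot \id^{\ot k})=m_\calp^T$ for every $T'\subsetneq T$, I would factor every surviving term as $m_\calp^T\circ(f_1\ot\cdots\ot f_n)$ composed on the right with an expression of the form $r_{\sigma(T,T')^{-1}}\circ(\id^{\ot i-1}\ot \Delta_{T'}\ot \id^{\ot k})\circ\Delta_{T/T'}$, times an appropriate sign. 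Pulling out the common left factor $m_\calp^T\circ(f_1\ot\cdots\ot f_n)$, what remains summed over all $T'\subset T$ is precisely the left-hand side of the homotopy cooperad axiom for $\calc$ applied to $T$, which vanishes by assumption.

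The main obstacle will be the sign bookkeeping: one has to verify that the Koszul signs generated when $r_{\sigma(T,T')}$ permutes the $f_i$ among themselves, combined with the combinatorial factor $(-1)^{i-1+jk}\sgn(\sigma(T,T'))$ from the homotopy operad relation and the prefactor $(-1)^{\frac{n(n-1)}{2}+1+n\sum_{i}|f_i|}$ appearing in the definition of $m_T$, together reproduce exactly the signs $\sgn(\sigma(T,T')^{-1})(-1)^{i-1+jk}$ that appear in the homotopy cooperad axiom for $\calc$ (using $\sgn(\sigma)=\sgn(\sigma^{-1})$). This is a lengthy but entirely mechanical verification using the Koszul sign rule and the definition of the permutation $\sigma(T,T')$.
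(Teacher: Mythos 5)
The paper offers no proof of this proposition: it is recalled as standard background in Subsection 1.2, with the reader referred to the literature cited there, so there is no argument of the authors' to compare yours against. That said, your plan is the standard direct verification and its architecture is correct: the two boundary groups ($\omega(T')=1$ and $T'=T$) contribute the $d_\calp$-terms, which cancel among themselves by the Leibniz rule for the dg operad $\calp$, and every remaining term factors as $m_\calp^T\circ(f_1\ot\cdots\ot f_n)$ applied to one summand of the homotopy cooperad axiom for $\calc$, whose total vanishing finishes the argument.

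Two caveats. First, the associativity identity you invoke is not literally $m_\calp^{T/T'}\circ(\id^{\ot i-1}\ot m_\calp^{T'}\ot\id^{\ot k})=m_\calp^T$: the planar order on the vertices of $T$ is not the concatenation of the orders on $T/T'$ and $T'$, so the identity only holds after precomposing with $r_{\sigma(T,T')}$; you implicitly use this when you discuss the Koszul signs produced by permuting the $f_i$, but the identity should be stated with the permutation built in. Second, and more substantially, the entire content of the proposition is that the specific prefactor $(-1)^{\frac{n(n-1)}{2}+1+n\sum_i|f_i|}$ is the one for which the Koszul signs from $r_{\sigma(T,T')}$, the factors $(-1)^{i-1+jk}\sgn(\sigma(T,T'))$, and the signs in the cooperad axiom all conspire to cancel. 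Declaring this check ``mechanical'' and omitting it means the one step that could actually fail --- a wrong exponent would break precisely this cancellation and nothing else --- has not been verified; to count as a proof you would need to carry out the sign computation at least for a representative term of each of your three groups.
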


\begin{prop}\cite{VdL02,MV09a}\label{Prop: Linfinity give MC}
	Let $\calc$ be a coaugmented homotopy cooperad and $\calp$ be a unital dg operad. Then there is a natural bijection:
	\[\Hom_{udgOp}(\Omega\calc, \calp)\cong \calm\calc\Big(\mathbf{Hom}(\overline{\calc},\calp)^{\prod}\Big),\]
	where the {  left-hand} side is the set of morphisms of unital dg operads from $\Omega C$ to $\calp$ and the {  right-hand} side is the set of Maurer-Cartan elements in the $L_\infty$-algebra $\mathbf{Hom}(\overline{\calc},\calp)^{\prod}$.
\end{prop}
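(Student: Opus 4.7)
The plan is to exploit the universal property of the cobar construction as a free graded operad, reduce the dg-compatibility condition to an equation on generators, and then verify that this equation is precisely the Maurer-Cartan equation in the convolution $L_\infty$-algebra, with the main labour being sign bookkeeping.

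First, since $\Omega\calc$ is the free (unital) graded operad generated by the graded collection $s^{-1}\overline{\calc}$, any morphism of unital graded operads $F: \Omega\calc \to \calp$ is uniquely determined by its restriction to generators $F|_{s^{-1}\overline{\calc}}: s^{-1}\overline{\calc} \to \calp$. Via the desuspension, such a restriction corresponds bijectively to a degree $-1$ element $\alpha \in \mathbf{Hom}(\overline{\calc},\calp)^{\prod}$ by the formula $\alpha(f) = F(s^{-1}f)$ (up to a fixed Koszul sign). Conversely, every degree $-1$ element $\alpha$ extends uniquely to a unital graded operad morphism $F_\alpha$. This gives a bijection between $\Hom_{ugOp}(\Omega\calc,\calp)$ and degree $-1$ elements of $\mathbf{Hom}(\overline{\calc},\calp)^{\prod}$.

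Next, $F_\alpha$ is a dg morphism if and only if $F_\alpha \circ \partial_{\Omega\calc} = d_\calp \circ F_\alpha$. Both sides are derivations for the operadic composition on $\Omega\calc$ (the target inherits a derivation structure via $F_\alpha$), and $\Omega\calc$ is freely generated by $s^{-1}\overline{\calc}$, so it suffices to check the identity on generators. Evaluating on $s^{-1}f$ with $f \in \overline{\calc}(n)$, the definition of $\partial$ gives
\[F_\alpha(\partial(s^{-1}f)) = -\sum_{T \in \mathfrak{T}(n)} F_\alpha\bigl((s^{-1})^{\otimes T} \circ \Delta_T(f)\bigr),\]
and after pushing each generator through $F_\alpha$ this becomes, up to a universal Koszul sign depending only on $\omega(T)$, a sum of terms of the form $m_\calp^T \circ (\alpha \otimes \cdots \otimes \alpha) \circ \Delta_T(f)$; the right-hand side $d_\calp \circ F_\alpha(s^{-1}f)$ contributes the $\omega(T)=1$ piece.

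The third step is to recognise the resulting equation as the Maurer-Cartan equation for $\alpha$ in the convolution $L_\infty$-algebra $\mathbf{Hom}(\overline{\calc},\calp)^{\prod}$. By construction the operation $m_T$ of the convolution homotopy operad is, up to the sign $(-1)^{n(n-1)/2 + 1 + n\sum |f_i|}$, given by $m_\calp^T \circ (-) \circ \Delta_T$; since $\alpha$ is of degree $-1$, antisymmetrising $m_n = \sum_{T \in \mathfrak{T}(n)} m_T$ at $\alpha^{\otimes n}$ produces $l_n(\alpha^{\otimes n}) = n!\, m_n(\alpha^{\otimes n})$ because the Koszul factor and the signature cancel when all arguments coincide and are odd. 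Consequently
\[\sum_{n\geqslant 1} \frac{1}{n!}(-1)^{\frac{n(n-1)}{2}} l_n(\alpha^{\otimes n}) = \sum_{n\geqslant 1}\sum_{T \in \mathfrak{T}(n)} (-1)^{\frac{n(n-1)}{2}} m_T(\alpha^{\otimes n}),\]
and a direct sign comparison with the unfolded equation of the previous step shows these two expressions coincide (the $\omega(T)=1$ part corresponding to the linear term involving $d_\calp$ and $\Delta_T$ for corollas). Thus $F_\alpha$ is a dg morphism if and only if $\alpha$ is Maurer-Cartan.

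Finally, compatibility with the (co)augmentations is handled by noting that the coaugmentation of $\calc$ is absorbed into the fact that $\alpha$ takes values on $\overline{\calc}$ only, and the unit of $\calp$ is respected automatically by any unital graded operad morphism out of $\Omega\calc$. The bijection is natural in the obvious sense. The main obstacle is the sign comparison in the third step; in particular, one must verify that the sign $(-1)^{\frac{n(n-1)}{2}+1+n\sum|f_i|}$ built into $m_T$, combined with the $(s^{-1})^{\otimes T}$ coming from the cobar differential and the overall $(-1)^{n(n-1)/2}$ of the MC equation, align term by term with the sign produced by pushing $F_\alpha$ past the iterated desuspensions in $\partial(s^{-1}f)$.
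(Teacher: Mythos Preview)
The paper does not actually prove this proposition: it is stated in the preliminaries subsection on homotopy (co)operads, where the authors explicitly say they are ``collecting some basics \ldots\ scattered in several references \cite{Mar96, MV09a, MV09b, DP16}'', and no proof is supplied. So there is no paper proof to compare against.

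Your argument is the standard one and is essentially correct in outline: free operad universal property reduces to a degree $-1$ map on generators, the dg condition is checked on generators since both sides are derivations, and unwinding the cobar differential against the convolution $m_T$'s yields the Maurer-Cartan equation. The only caveat is that the sign comparison, which you yourself flag as the main obstacle, is sketched rather than carried out: phrases like ``up to a fixed Koszul sign'' and ``a direct sign comparison \ldots\ shows these two expressions coincide'' are exactly where the work lies, and in a fully rigorous write-up you would need to track the sign from $(s^{-1})^{\otimes \omega(T)}$ applied to $\Delta_T(f)$, the Koszul sign from permuting the $s^{-1}$'s past the cooperad outputs, the sign $(-1)^{\frac{n(n-1)}{2}+1+n\sum|f_i|}$ in the definition of $m_T$, and the $(-1)^{\frac{n(n-1)}{2}}$ in the Maurer-Cartan equation, and show they match. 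This is routine but not automatic; as a proof sketch your proposal is fine, but as a complete proof it would need that verification written out.
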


\subsection{Rota-Baxter   algebras and their bimodules}\
\label{Rota-Baxter   algebras and their bimodules}

\begin{defn}\label{Def: Rota-Baxter   algebra}
Let $(A, \mu=\cdot)$ be an associative algebra over field $\bfk$ and
$\lambda\in \bfk$. A linear
operator $T: A\rightarrow A$ is said to be a Rota-Baxter operator of
weight $\lambda$ if it satisfies
\begin{eqnarray}\label{Eq: Rota-Baxter relation}
T(a)\cdot T(b)=T\big(a\cdot T(b)+T(a)\cdot b+\lambda\  a\cdot
b\big)\end{eqnarray}	
for any $a,b \in A$, or in terms of maps
\begin{eqnarray}\label{Eq: Rota-Baxter relation in terms of maps}
\mu\circ (T \ot T)=T\circ (\Id\ot T +T \ot \Id)+\lambda\ T\circ \mu.\end{eqnarray} In this case,  $(A,\mu,T)$ is called a Rota-Baxter
algebra of weight $\lambda$. Denote by $\RBA$ the category of Rota-Baxter   algebras of weight $\lambda$ with obvious morphisms.
\end{defn}

\begin{remark}\label{Remark: Koszul duality not applied to RB algebras}

The  operad for Rota-Baxter   algebras of weight $\lambda$, denoted by $\RB$,  is generated by a unary operator $T$ and a binary operator $\mu$ with   operadic relations generated by $$ \mu\circ_1\mu-\mu\circ_2\mu\quad \mathrm{and}\quad (\mu\circ_1T)\circ_2T-(T\circ_1\mu)\circ_1T-(T\circ_1\mu)\circ_2T-\lambda T\circ_1\mu.$$
It is obvious that  the operad $\RB$ is not a Koszul operad, not even  a quadratic operad.

Recall that  for a Koszul operad $\calp$, the minimal model of $\calp$ is exactly $\Omega \calp^{\ac}$, the cobar construction of its Koszul dual cooperad $\calp^\ac$. However, since  the operad $\RB$ is not Koszul, its Koszul dual ${\RB^\ac}$ should be  a homotopy cooperad, rather than a genuine cooperad; we will exhibit this homotopy cooperad in  Section~\ref{Section: Koszul dual cooperad}.

\end{remark}

% \subsection{Rota-Baxter bimodules}

\begin{defn}\label{Def: Rota-Baxter bimodules}
Let $(A,\mu,T)$ be a Rota-Baxter   algebra and $M$ be a bimodule
over associative algebra $(A,\mu)$. We say that $M$ is a bimodule
over Rota-Baxter   algebra $(A,\mu, T)$  or a Rota-Baxter bimodule if  $M$ is endowed with a
linear operator $T_M: M\rightarrow M$ such that the following
equations
\begin{eqnarray}T(a) T_M(m)&=&T_M\big(aT_M(m)+T(a)m+\lambda am\big),\\
T_M(m)T(a)&=&T_M\big(mT(a)+T_M(m)a+\lambda ma\big).
\end{eqnarray}
hold for any $a\in A$ and $m\in M$.
\end{defn}
Of course, $(A,\mu, T)$ itself is a bimodule over the Rota-Baxter   algebra $(A,\mu,T)$, called the regular Rota-Baxter bimodule.

%There is a  definition of bimodules over two Rota-Baxter   algebras in \cite{QGG19}.

%\begin{remark} One can use   monoid objects in certain slice categories to justify Definition~\ref{Def: Rota-Baxter bimodules} following \cite{DMZ}. In fact, one can show an equivalence
%  between the category of monoids in the slice category $\RBA/A$ and that of Rota-Baxter bimodules over   a   Rota-Baxter  algebra $A$.
%\end{remark}

Rota-Baxter   algebras and Rota-Baxter bimodules have some descendent properties.
\begin{prop}[{\cite[Theorem 1.1.17]{Guo12}}]\label{Prop: new RB algebra}
	Let $(A,\mu,T)$ be a Rota-Baxter   algebra. Define a new binary operation as:
\begin{eqnarray}a\star  b:=a\cdot T(b)+T(a)\cdot b+\lambda a\cdot b\end{eqnarray}
for any $a,b\in A$. Then
\begin{itemize}

\item[(i)]   the operation $\star $ is associative and  $(A,\star )$ is a new  associative algebra;

    \item[(ii)] the triple  $(A,\star ,T)$ also forms a Rota-Baxter   algebra of weight $\lambda$  and denote it by $A_\star $;

\item[(iii)] the map $T:(A,\star , T)\rightarrow (A,\mu, T)$ is a  morphism of Rota-Baxter  algebras.
    \end{itemize}
	\end{prop}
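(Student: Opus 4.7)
The plan is to prove all three parts by direct computation, anchored on the key observation that the Rota-Baxter relation~(\ref{Eq: Rota-Baxter relation}) can be rephrased as
$$T(a\star b) \;=\; T(a)\cdot T(b),$$
i.e.\ the new product $\star$ is precisely the one whose image under $T$ is the ordinary product of the images. Granting this reformulation, part~(iii) is essentially immediate: $T(a\star b)=T(a)\cdot T(b)$ is the identity just displayed, and $T$ trivially commutes with itself, so $T$ is a morphism of Rota-Baxter algebras from $(A,\star,T)$ to $(A,\mu,T)$.

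For part~(i), I would expand $(a\star b)\star c$ by applying the definition of $\star$ twice, then use $T(a\star b)=T(a)\cdot T(b)$ to remove the inner $T$ of a non-basic argument. This produces a sum of seven monomials in $a,b,c$ with various insertions of $T$ (including one $\lambda^2 abc$ term). Performing the analogous expansion on $a\star(b\star c)$ yields the same seven monomials, now by associativity of the original product $\mu$. Matching the two lists term by term establishes associativity of $\star$.

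Part~(ii) is the heart of the matter. I would expand the left-hand side of the Rota-Baxter relation for $(A,\star,T)$,
$$T(a)\star T(b) \;=\; T(a)\cdot T^2(b) + T^2(a)\cdot T(b) + \lambda\,T(a)\cdot T(b),$$
and then use~(\ref{Eq: Rota-Baxter relation}) three times to absorb each product of two $T$-images into a single application of $T$; this produces a single $T$ applied to a sum with several nested $T$'s. Separately, I would expand the right-hand side $T(a\star T(b)+T(a)\star b+\lambda a\star b)$ by merely unfolding the definition of $\star$, giving $T$ applied to nine monomials. A careful accounting shows the two expressions inside $T$ coincide (certain terms appear with coefficient~$2$, matching the coefficient~$2$ produced on the LHS by applying the Rota-Baxter relation to $T(a)\cdot T(b)$ and then multiplying by $\lambda$), and the equality follows.

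The only obstacle is bookkeeping; no conceptual difficulty is involved. The trickiest step is ensuring that, in part~(ii), the six-term expansion arising from the LHS after three applications of the Rota-Baxter relation matches the nine-term expansion arising from the RHS, in particular that the $\lambda$- and $\lambda^2$-coefficients line up. Organising the monomials by which variables carry $T$ makes the comparison routine.
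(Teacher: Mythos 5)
Your proposal is correct: the identity $T(a\star b)=T(a)\cdot T(b)$ is just the Rota--Baxter relation~\eqref{Eq: Rota-Baxter relation} restated, and the term-by-term expansions you describe for (i) and (ii) do match (I checked: both sides of (ii) reduce to $T\big(aT^2(b)+2T(a)T(b)+2\lambda aT(b)+T^2(a)b+2\lambda T(a)b+\lambda^2 ab\big)$, with the coefficient-$2$ terms arising from overlaps among all three applications of the relation, not only from the $\lambda T(a)T(b)$ summand as you suggest --- a harmless imprecision in the bookkeeping narrative). The paper itself gives no proof, deferring to \cite[Theorem 1.1.17]{Guo12}, where essentially this same direct computation is carried out, so your argument fills in exactly what the citation delegates.
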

One can also construct new Rota-Baxter bimodules from old ones.
\begin{prop}\label{Prop: new-bimodule}
Let $(A,\mu,T)$ be a Rota-Baxter   algebra of weight $\lambda$ and $(M,T_M)$ be a Rota-Baxter bimodule over it. We define a left action $``\rhd "$ and a right action $``\lhd"$ of $A$ on $M$ as follows: for any $a\in A,m\in M$,
\begin{eqnarray}
a\rhd m:&=& T(a)m-T_M(am),\\
m\lhd a:&=& mT(a)-T_M(ma).
\end{eqnarray}
Then these actions make $M$ into a Rota-Baxter bimodule over $A_\star $ and denote this new  bimodule by $_\rhd M_\lhd$.
\end{prop}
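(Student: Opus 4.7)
The plan is to verify directly that the quadruple $(M, \rhd, \lhd, T_M)$ satisfies all the axioms of a Rota-Baxter bimodule over $A_\star=(A,\star,T)$. There are two groups of conditions to check: first, that $\rhd$ and $\lhd$ make $M$ into an $(A,\star)$-bimodule, i.e.\ the three associativity-type identities
\[(a\star b)\rhd m=a\rhd(b\rhd m),\quad m\lhd(a\star b)=(m\lhd a)\lhd b,\quad a\rhd(m\lhd b)=(a\rhd m)\lhd b;\]
and second, that $T_M$ is a Rota-Baxter operator on $M$ of weight $\lambda$ with respect to $(\rhd,\lhd)$, that is,
\[T(a)\rhd T_M(m)=T_M\bigl(a\rhd T_M(m)+T(a)\rhd m+\lambda\,a\rhd m\bigr)\]
and the analogous right-handed identity.

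The engine driving every computation is the pair of rewriting rules coming from Definition~\ref{Def: Rota-Baxter   algebra} and Definition~\ref{Def: Rota-Baxter bimodules}, which I will use in the mnemonic form
\[T(a)T(b)=T(a\star b),\qquad T(a)T_M(m)=T_M\bigl(a\cdot T_M(m)+T(a)\cdot m+\lambda\, a\cdot m\bigr),\]
together with its right-module analogue. For the first associativity identity, I would expand $(a\star b)\rhd m=T(a\star b)m-T_M((a\star b)m)$, rewrite $T(a\star b)=T(a)T(b)$, and expand the right side by iterating the definition of $\rhd$; the cross-term $T(a)T_M(bm)$ is then eliminated by the Rota-Baxter bimodule relation, leaving precisely $T(a)T(b)m-T_M((a\cdot T(b)+T(a)\cdot b+\lambda ab)m)$. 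The right-action identity is entirely symmetric, and the mixed identity $a\rhd(m\lhd b)=(a\rhd m)\lhd b$ unfolds into a sum of four $T_M$-terms on each side, which match term-by-term after a single application of each Rota-Baxter bimodule rule.

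For the second group, I would compute $T(a)\rhd T_M(m)=T(T(a))T_M(m)-T_M(T(a)T_M(m))$ and expand each piece using the Rota-Baxter bimodule axiom for $T_M$. On the other side, $T_M(a\rhd T_M(m)+T(a)\rhd m+\lambda\,a\rhd m)$ also expands into terms of the form $T_M(\,\cdot\,)$ and $T_M(T_M(\,\cdot\,))$ after substituting the definition of $\rhd$. A careful bookkeeping, using nothing beyond the Rota-Baxter identities for $T$ and $T_M$, reveals a complete cancellation matching the two sides; the right-handed axiom is symmetric.

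I do not anticipate any conceptual obstruction: every identity reduces to a finite combinatorial matching of $T_M$-expressions in which the Rota-Baxter relations act as straightening rules. The main practical difficulty is just sign- and term-tracking, especially for the mixed bimodule relation where the weight-$\lambda$ term must be correctly absorbed on both sides. Once this bookkeeping is done, the statement is immediate, in complete parallel with Proposition~\ref{Prop: new RB algebra}, of which this result is the module-theoretic shadow.
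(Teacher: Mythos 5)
Your proposal is correct: the paper explicitly leaves this proof to the reader, and your direct verification of the three bimodule associativity identities plus the two Rota--Baxter conditions for $T_M$ over $(A,\star,T)$ is exactly the intended routine check. All the cancellations you describe do go through (e.g.\ $T(a\star b)=T(a)T(b)$ handles the left action, and $T(a)\rhd T_M(m)=T(T(a))T_M(m)-T_M(T(a)T_M(m))$ collapses to $T_M\bigl(T(T(a))m+\lambda T(a)m\bigr)$ on both sides), so the argument is complete.
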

The easy proof of the above result is left to the reader.

%These new structures will reappear in the deformation complex of Rota-Baxter   algebras; see Section~\ref{Section: deformation complex}.
\bigskip

\section{The Koszul dual homotopy cooperad}\label{Section: Koszul dual cooperad}

In this section,   we will   construct a homotopy cooperad which can be considered as the Koszul dual of the operad for Rota-Baxter   algebras, and we will prove that the cobar construction of this homotopy cooperad is exactly the minimal model for the operad of Rota-Baxter   algebras in Section~\ref{Section: minimal model}.

%Recall that the  operad for Rota-Baxter   algebras of weight $\lambda$, denoted by $\RB$,  is generated by a unary operator $T$ and a binary operatore $\mu$ with the operadic relation generated by $$ \mu\circ_1\mu-\mu\circ_2\mu\quad \mathrm{and}\quad (\mu\circ_1T)\circ_2T-(T\circ_1\mu)\circ_1T-(T\circ_1\mu)\circ_2T-\lambda T\circ_1\mu.$$

%\smallskip

%Now, let's construct the homotopy cooperad ${\RB^\ac}$ explicitly and we will prove that its cobar construction is exactly the minimal model for the operad $\RB$.

 Define a graded collection $\mathscr{S}(\RB^\ac)$ by   $$\mathscr{S}(\RB^\ac)(n)=\bfk u_n\oplus \bfk v_n, \ \mathrm{with}\   |u_n|=0, |v_n|=1$$ for $n\geqslant 1$. Now, we put a coaugmented homotopy cooperad structure on   $\mathscr{S}(\RB^\ac)$. Firstly, consider  trees of arity $n\geqslant 1$ in the following list:
\begin{itemize}[keyvals]
	
	\item[(I)] Trees of weight two:     for each $1\leqslant j\leqslant n$ and   $1\leqslant i\leqslant n-j+1$, there exists such a tree, which can be visualized as
	\begin{eqnarray*}
		\begin{tikzpicture}[scale=0.75,descr/.style={fill=white}]
			\tikzstyle{every node}=[thick,minimum size=3pt, inner sep=1pt]
			\node(r) at (0,-0.5)[minimum size=0pt,circle]{};
			\node(v0) at (0,0)[fill=black, circle,label=right:{\tiny $ n-j+1$}]{};
			\node(v1-1) at (-1.5,1){};
			\node(v1-2) at(0,1)[fill=black,circle,label=right:{\tiny $\tiny j$}]{};
			\node(v1-3) at(1.5,1){};
			\node(v2-1)at (-1,2){};
			\node(v2-2) at(1,2){};
			\draw(v0)--(v1-1);
			\draw(v0)--(v1-3);
			\draw(v1-2)--(v2-1);
			\draw(v1-2)--(v2-2);
			\draw[dotted](-0.4,1.5)--(0.4,1.5);
			\draw[dotted](-0.5,0.5)--(-0.1,0.5);
			\draw[dotted](0.1,0.5)--(0.5,0.5);
			\path[-,font=\scriptsize]
			(v0) edge node[descr]{{\tiny$i$}} (v1-2);
		\end{tikzpicture}
	\end{eqnarray*}
	
	\item[(II)] Trees of weight $k+1\geqslant 3$ and of ``height'' two: in these trees,  the first vertex in the planar order has arity $k$, the vertex connected to the $t$-th leaf of the first vertex has arity $r_t$ for each $1\leqslant t\leqslant k$ (so $2\leqslant k\leqslant n$ and $r_1+\cdots +r_k=n$); these trees have the following picture:
	\begin{eqnarray*}
		\begin{tikzpicture}[scale=0.8,descr/.style={fill=white}]
			\tikzstyle{every node}=[thick,minimum size=3pt, inner sep=1pt]
			\node(v0) at (0,-1.5)[circle, fill=black,label=right:$k$]{};
			\node(v1) at(-1.2,-0.3)[circle, fill=black,label=right:$r_1$]{};
			\node(v1-1) at(-2,0.8){};
			\node(v1-2) at (-1,0.8){};
			\node(v3) at (1.2,-0.3)[circle, fill=black,label=right:$r_k$]{};
			\node(v3-1) at (1,0.8){};
			\node(v3-2) at (2,0.8){};
			\node(v2-1) at (-0.5,-0.3) [circle,fill=black,label=right:$r_2$]{};
			\node(v2-1-1) at (-0.8,0.8){};
			\node(v2-1-2) at (0.2, 0.8){};
			%\draw [dotted,line width=1pt] (-0.7,-0.5)--(-0.2,-0.5);
			\draw [dotted,line width=0.5pt] (0,-0.6)--(0.6,-0.6);
			\draw [dotted,line width=0.5pt] (-0.5,0.5)--(-0.1,0.5);
			\draw [dotted,line width=0.5pt] (-1.6,0.5)--(-1.2, 0.5);
			\draw [dotted, line width=0.5pt] (1.2,0.5)--(1.6,0.5);
			\draw        (v0)--(v1);
			\draw         (v0)--(v3);
			\draw         (v1)--(v1-1);
			\draw          (v1)--(v1-2);
			\draw        (v2-1)--(v2-1-1);
			\draw        (v2-1)--(v2-1-2);
			\draw        (v3)--(v3-1);
			\draw        (v3)--(v3-2);
            \draw        (v0)--(v2-1);
			%\path[-,font=\scriptsize]
			%(v0) edge node[descr]{{\tiny$t$}} (v2-1);
		\end{tikzpicture}
	\end{eqnarray*}
	 where $k\geqslant 2$, $r_t\geqslant 1$ for all $k\geqslant t\geqslant 1$.
	\item[(III)] Trees of weight $q+1\geqslant 3$ and of ``height'' three and there exists a unique vertex in the first two levels: in these trees, there are numbers $2\leqslant q\leqslant p\leqslant n$, $1\leqslant k_1<\dots<k_{q-1}\leqslant p$, $1\leqslant i\leqslant r_1$ and $r_j\geqslant 1$ for all $1\leqslant j\leqslant q$ such that the first vertex has arity $r_1$, the second vertex has arity $p$ and is connected to the $i$-th leaf of the first vertex and the other vertices connected to the $k_1$-th (resp. $k_2$-th, $\dots, k_{q-1}$-th) leaf of the second vertex and with arity $r_2$ (resp. $r_3, \dots, r_q$), so $r_1+\cdots+r_q+p-q=n$; these trees are drawn in this way:
	\begin{eqnarray*}
		\begin{tikzpicture}[scale=0.8,descr/.style={fill=white}]
			\tikzstyle{every node}=[thick,minimum size=3pt, inner sep=1pt]
			\node(v0) at (0,0)[circle,fill=black,label=below:{\tiny $r_1$}]{};
			\node(v1-1) at (-2,1){};
			\node(v1-2) at(0,1.2)[circle,fill=black,label=right:{\tiny $p$}]{};
			\node(v1-3) at(2,1){};
			\node(v2-1) at(-1.9,2.45){};
			\node(v2-2) at (-0.9, 2.8)[circle,fill=black,label=right:{\tiny $r_2$}]{};
			\node(v2-3) at (0,2.9){};
			\node(v2-4) at(0.9,2.8)[circle,fill=black,label=right:{\tiny $r_q$}]{};
			\node(v2-5) at(1.9,2.45){};
			\node(v3-1) at (-1.5,3.5){};
			\node(v3-2) at (-0.3,3.5){};
			\node(v3-3) at (0.3,3.5){};
			\node(v3-4) at(1.5,3.5){};
			\draw(v0)--(v1-1);
			\draw(v0)--(v1-3);
			\path[-,font=\scriptsize]
			(v0) edge node[descr]{{\tiny$i$}} (v1-2);
			\draw(v1-2)--(v2-1);
			\draw(v1-2)--(v2-3);
			\draw(v1-2)--(v2-5);
			\path[-,font=\scriptsize]
			(v1-2) edge node[descr]{{\tiny$k_1$}} (v2-2)
			edge node[descr]{{\tiny$k_{q-1}$}} (v2-4);
			\draw(v2-2)--(v3-1);
			\draw(v2-2)--(v3-2);
			\draw(v2-4)--(v3-3);
			\draw(v2-4)--(v3-4);
			\draw[dotted](-1,0.7)--(-0.1,0.7);
			\draw[dotted](0.1,0.7)--(1,0.7);
			\draw[dotted](-0.5,2.4)--(-0.1,2.4);
			\draw[dotted](0.1,2.4)--(0.5,2.4);
			\draw[dotted](-1.4,2.4)--(-0.8,2.4);
			\draw[dotted](1.4,2.4)--(0.8,2.4);
			\draw[dotted](-1.1,3.2)--(-0.6,3.2);
			\draw[dotted](1.1,3.2)--(0.6,3.2);
		\end{tikzpicture}
	\end{eqnarray*}
	%where $2\leqslant q\leqslant p$, $1\leqslant k_1<\dots<k_{q-1}\leqslant p$, $1\leqslant i\leqslant r_1$ and $r_j\geqslant 1$ for all $1\leqslant j\leqslant q$.
\end{itemize}

Now, we define a family of operations $\{\Delta_T: \mathscr{S}(\RB^\ac)\rightarrow \mathscr{S}({\RB^\ac})^{\ot T}\}_{T\in \frakt}$ as follows:
\begin{itemize}
	\item[(1)] For a tree $T$ of type $\mathrm{(I)}$ with $1\leqslant j\leqslant n, 1\leqslant i\leqslant n-j+1$,   define $\Delta_T(u_n)=u_{n-j+1}\ot u_j$, which can be drawn as
	\begin{eqnarray*}
		\begin{tikzpicture}[scale=0.8,descr/.style={fill=white}]
			\tikzstyle{every node}=[thick,minimum size=5pt, inner sep=1pt]
			\node(r) at (0,-0.5)[minimum size=0pt,rectangle]{};
			\node(v-2) at(-2,0.5)[minimum size=0pt, label=left:{$\Delta_T(u_n)=$}]{};
			\node(v0) at (-0.5,0)[draw,rectangle]{{\small $u_{n-j+1}$}};
			\node(v1-1) at (-2,1){};
			\node(v1-2) at(-0.5,1)[draw,rectangle]{\small$u_j$};
			\node(v1-3) at(1,1){};
			\node(v2-1)at (-1.5,2){};
			\node(v2-2) at(0.5,2){};
			\draw(v0)--(v1-1);
			\draw(v0)--(v1-3);
			\draw(v1-2)--(v2-1);
			\draw(v1-2)--(v2-2);
			\draw[dotted](-0.9,1.5)--(-0.1,1.5);
			\draw[dotted](-1,0.5)--(-0.5,0.5);
			\draw[dotted](-0.5,0.5)--(0,0.5);
			\path[-,font=\scriptsize]
			(v0) edge node[descr]{{\tiny$i$}} (v1-2);
		\end{tikzpicture}
	\end{eqnarray*}
and
$$\Delta_T(v_n)=\left\{\begin{array}{ll} v_n\ot u_1, & j=1,\\
\lambda^{j-1}v_{n-j+1}\ot u_j, & 2\leqslant j\leqslant n-1,\\
\lambda^{n-1}v_{1}\ot u_n+u_1\ot v_n, & j=n, \end{array}\right.$$
which can be pictured as
	\begin{eqnarray*}
		\begin{tikzpicture}[scale=0.9,descr/.style={fill=white}]
			\tikzstyle{every node}=[thick,minimum size=5pt, inner sep=1pt]
			\node(r) at (0,-0.5)[minimum size=0pt,rectangle]{};
			\node(v-1) at(-2,0.5)[minimum size=0pt, label=left:{when $j=1$, $\Delta_T(v_n)=$}]{};
			\node(v0) at (-0.5,0)[draw,rectangle]{\small$v_n$};
			\node(v1-1) at (-1.8,1){};
			\node(v1-2) at(-0.5,1)[draw,rectangle]{\small$u_1$};
			\node(v1-3) at(0.8,1){};
			\node(v2-1)at (-0.5,1.8){};
			\draw(v0)--(v1-1);
			\draw(v0)--(v1-3);
			\draw(v1-2)--(v2-1);
			\draw[dotted](-1,0.5)--(-0.6,0.5);
			\draw[dotted](-0.4,0.5)--(0,0.5);
			\path[-,font=\scriptsize]
			(v0) edge node[descr]{{\tiny$i$}} (v1-2);
		\end{tikzpicture}
	\end{eqnarray*}
	\begin{eqnarray*}
		\begin{tikzpicture}[scale=0.8,descr/.style={fill=white}]
			\tikzstyle{every node}=[thick,minimum size=5pt, inner sep=1pt]
			\node(r) at (0,-0.5)[minimum size=0pt,rectangle]{};
			\node(v-2) at(-3,0.5)[minimum size=0pt, label=left:{when $2\leqslant j\leqslant n-1$,\  $\Delta_T(v_n)=$}]{};
			\node(v-1) at(-1.75,0.5)[minimum size=0pt,label=left:{$\lambda^{j-1}$}]{};
			\node(v0) at (-0.5,0)[draw, rectangle]{$v_{n-j+1}$};
			\node(v1-1) at (-2,1){};
			\node(v1-2) at(-0.5,1.25)[draw,rectangle]{\small $u_j$};
			\node(v1-3) at(1,1){};
			\node(v2-1)at (-1.5,2.25){};
			\node(v2-2) at(0.5,2.25){};
			\draw(v0)--(v1-1);
			\draw(v0)--(v1-3);
			\draw(v1-2)--(v2-1);
			\draw(v1-2)--(v2-2);
			\draw[dotted](-0.9,1.75)--(-0.1,1.75);
			\draw[dotted](-1,0.5)--(-0.6,0.5);
			\draw[dotted](-0.4,0.5)--(0,0.5);
			\path[-,font=\scriptsize]
			(v0) edge node[descr]{{\tiny$i$}} (v1-2);
		\end{tikzpicture}
	\end{eqnarray*}
	\begin{eqnarray*}
		\begin{tikzpicture}[scale=0.8,descr/.style={fill=white}]
			\tikzstyle{every node}=[thick,minimum size=5pt, inner sep=1pt]
			\node(r) at (0,-0.5)[minimum size=0pt,rectangle]{};
			\node(va) at(-3,0.5)[minimum size=0pt, label=left:{when $j=n$, \ $\Delta_T(v_n)=$}]{};
			\node(vb) at(-2,0.5)[minimum size=0pt,label=left:{$\lambda^{n-1}$}]{};
			\node(vc) at (-1.5,0)[draw, rectangle]{\small $v_1$};
			\node(v1) at(-1.5,1)[draw,rectangle]{\small $u_n$};
			\node(v2-1)at (-2.5,2){};
			\node(v2-2) at(-0.5,2){};
			\node(vd) at(0,0.5)[minimum size=0, label=right:$+$]{};
			\node(ve) at (2,0)[draw, rectangle]{\small $u_1$};
			\node(ve1) at (2,1)[draw,rectangle]{\small $v_n$};
			\node(ve2-1) at(1,2){};
			\node(ve2-2) at(3,2){};
			\draw(v1)--(v2-1);
			\draw(v1)--(v2-2);
			\draw[dotted](-1.9,1.5)--(-1.1,1.5);
			\draw(vc)--(v1);
			\draw(ve)--(ve1);
			\draw(ve1)--(ve2-1);
			\draw(ve1)--(ve2-2);
			\draw[dotted](1.6,1.5)--(2.4,1.5);
		\end{tikzpicture}
	\end{eqnarray*}
	
	\item[(2)] For a tree $T$ of type $\mathrm{(II)}$ with  $2\leqslant k\leqslant n,   r_1+\cdots +r_k=n, r_1, \dots, r_k\geqslant 1$,  define
	\begin{eqnarray*}
		\begin{tikzpicture}[scale=0.8,descr/.style={fill=white}]
			\tikzstyle{every node}=[thick,minimum size=5pt, inner sep=1pt]
			\node(v-2) at (-5,-0.5)[minimum size=0pt, label=left:{$\Delta_T(v_n)=$}]{};
			\node(v-1) at(-5,-0.4)[minimum size=0pt,label=right:{$(-1)^{\frac{k(k-1)}{2}}$}]{};
			\node(v0) at (-1,-1.5)[rectangle, draw]{\small $u_k$};
			\node(v1) at(-2.2,-0.3)[rectangle,draw]{\small $v_{r_1}$};
			\node(v1-1) at(-3,0.8){};
			\node(v1-2) at (-2,0.8){};
			\node(v3) at (0.2,-0.3)[rectangle, draw]{\small $v_{r_k}$};
			\node(v3-1) at (0,0.8){};
			\node(v3-2) at (1,0.8){};
			\node(v2-1) at (-1.4,-0.3) [rectangle,draw]{\small $v_{r_2}$};
			\node(v2-1-1) at (-1.8,0.8){};
			\node(v2-1-2) at (-0.8, 0.8){};
			%\draw [dotted,line width=1pt] (-0.7,-0.5)--(-0.2,-0.5);
			\draw [dotted,line width=0.5pt] (-1,-0.6)--(-0.4,-0.6);
			\draw [dotted,line width=0.5pt] (-1.5,0.5)--(-1.1,0.5);
			\draw [dotted,line width=0.5pt] (-2.6,0.5)--(-2.2, 0.5);
			\draw [dotted, line width=0.5pt] (0.2,0.5)--(0.6,0.5);
			\draw        (v0)--(v1);
			\draw         (v0)--(v3);
            \draw         (v0)--(v2-1);
			\draw         (v1)--(v1-1);
			\draw          (v1)--(v1-2);
			\draw        (v2-1)--(v2-1-1);
			\draw        (v2-1)--(v2-1-2);
			\draw        (v3)--(v3-1);
			\draw        (v3)--(v3-2);
			%\path[-,font=\scriptsize]
			%(v0) edge node[descr]{{\tiny$t$}} (v2-1);
		\end{tikzpicture}
	\end{eqnarray*}
	\item[(3)] For a tree $T$ of type $\mathrm{(III)}$ with $2\leqslant q\leqslant p\leqslant n, 1\leqslant k_1<\dots<k_{q-1}\leqslant p, r_1+\cdots +r_q+p-q=n, 1\leqslant i\leqslant r_1, r_1, \dots, r_q\geqslant 1$, define
	\begin{eqnarray*}
		\begin{tikzpicture}[scale=0.8,descr/.style={fill=white}]
			\tikzstyle{every node}=[minimum size=4pt, inner sep=1pt]
			\node(v-2) at (-5,1.2)[minimum size=0pt, label=left:{$\Delta_T(v_n)=$}]{};
			\node(v-1) at(-5,1.3)[minimum size=0pt,label=right:{$(-1)^\frac{q(q-1)}{2}\lambda^{p-q}$}]{};
			\node(v0) at (0,0)[rectangle,draw]{\small $v_{r_1}$};
			\node(v1-1) at (-2,1){};
			\node(v1-2) at(0,1.2)[rectangle,draw]{\small $u_p$};
			\node(v1-3) at(2,1){};
			\node(v2-1) at(-1.9,2.4){};
			\node(v2-2) at (-0.9, 2.8)[rectangle,draw]{\small$v_{r_2}$};
			\node(v2-3) at (0,2.9){};
			\node(v2-4) at(0.9,2.8)[rectangle,draw]{\small $v_{r_q}$};
			\node(v2-5) at(1.9,2.4){};
			\node(v3-1) at (-1.5,3.5){};
			\node(v3-2) at (-0.3,3.5){};
			\node(v3-3) at (0.3,3.5){};
			\node(v3-4) at(1.5,3.5){};
			\draw(v0)--(v1-1);
			\draw(v0)--(v1-3);
			\path[-,font=\scriptsize]
			(v0) edge node[descr]{{\tiny$i$}} (v1-2);
			\draw(v1-2)--(v2-1);
			\draw(v1-2)--(v2-3);
			\draw(v1-2)--(v2-5);
			\path[-,font=\scriptsize]
			(v1-2) edge node[descr]{{\tiny$k_1$}} (v2-2)
			edge node[descr]{{\tiny$k_{q-1}$}} (v2-4);
			\draw(v2-2)--(v3-1);
			\draw(v2-2)--(v3-2);
			\draw(v2-4)--(v3-3);
			\draw(v2-4)--(v3-4);
			\draw[dotted](-1,0.7)--(-0.1,0.7);
			\draw[dotted](0.1,0.7)--(1,0.7);
			\draw[dotted](-0.5,2.3)--(-0.1,2.3);
			\draw[dotted](0.1,2.3)--(0.5,2.3);
			\draw[dotted](-1.4,2.3)--(-0.8,2.3);
			\draw[dotted](1.4,2.3)--(0.8,2.3);
			\draw[dotted](-1.1,3.2)--(-0.6,3.2);
			\draw[dotted](1.1,3.2)--(0.6,3.2);
		\end{tikzpicture}
	\end{eqnarray*}
	\item[(4)]All other components of $\Delta_T, T\in \frakt$ vanish.
\end{itemize}

\begin{prop}{\label{Prop: Homotopy cooperad }}
The graded collection $\mathscr{S}(\RB^\ac)$ endowed with the  operations $\{\Delta_T\}_{T\in \frakt}$ introduced above forms a coaugmented homotopy cooperad, whose  strict counit is the natural projection $\varepsilon:\mathscr{S}(\RB^\ac)\twoheadrightarrow \bfk u_1\cong \cali$ and the coaugmentation is just the natural embedding $\eta:\cali\cong \bfk u_1\hookrightarrow \mathscr{S}(\RB^\ac)$. 	
\end{prop}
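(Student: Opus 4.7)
The plan is to verify each clause of the definition of a coaugmented homotopy cooperad for $(\mathscr{S}(\RB^\ac),\{\Delta_T\}_{T\in\frakt})$ with the stated $\varepsilon$ and $\eta$. First I would check the easy conditions. The degree $|\Delta_T|=\omega(T)-2$ follows by a direct count using $|u_n|=0$ and $|v_n|=1$: in type (I) the image carries at most one odd generator, matching degree $0$; in type (II) the image $u_k\otimes v_{r_1}\otimes\cdots\otimes v_{r_k}$ has total degree $k$ against an input of degree $1$, so $|\Delta_T|=k-1=\omega(T)-2$; in type (III) the same count gives $q-1=\omega(T)-2$. The finiteness of $\{T:\Delta_T(x)\neq 0\}$ for each generator $x$ is clear since the three families of trees (I)--(III) with bounded output arity are finite. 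The strict counit and coaugmentation properties reduce to inspecting $\Delta_T$ on the trees $T_{1,i}$ and $T_2$, where the type-(I) formulas with $j=1$, or with $j=n$ and $n=1$, yield exactly $x\otimes u_1$ and $u_1\otimes x$.

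The substantive content is the generalised coassociativity
$$\sum_{T'\subset T} \sgn(\sigma(T,T')^{-1})(-1)^{i-1+jk}\, r_{\sigma(T,T')^{-1}}\circ(\id^{\otimes i-1}\otimes \Delta_{T'}\otimes \id^{\otimes k})\circ \Delta_{T/T'}=0$$
to be checked for every $T\in\frakt$. The strategy rests on the extreme sparsity of $\Delta_R$: a term survives only when both $\Delta_{T/T'}$ and the $\Delta_{T'}$ applied to an output of $\Delta_{T/T'}$ are supported on types (I)--(III). Consequently I would reduce to a finite list of shapes of $T$, namely those obtained by grafting two trees from the list (I)--(III). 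On the input $u_n$ the only contributing trees are of type (I), and the identity reduces to the coassociativity of the coproduct dual to associative multiplication, with two terms cancelling by the standard $\mathbf{As}^\vee$ sign. On the input $v_n$ the relevant $T$ fall into four families: (a) a type-(I) tree stacked on a type-(I) tree; (b) a type-(I) tree inserted at a $v$-vertex of a type-(II) or type-(III) tree; (c) a type-(II) or type-(III) tree inserted at the $v$-vertex of a type-(I) tree; and (d) a type-(I) tree inserted at the $u$-vertex of a type-(III) tree, producing another type-(III)-shaped $T$.

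For each such $T$ I would enumerate the contributing decompositions, collect the three sign contributions $\sgn(\sigma(T,T')^{-1})$, $(-1)^{i-1+jk}$, and the Koszul sign from permuting the odd $v$-generators, and track the $\lambda$-powers coming both from the type-(I) coproduct of $v_m$ and from the global factor $\lambda^{p-q}$ in type (III); the cancellation then follows by pairing off the two or three contributing subdivisions. The main obstacle will be coherent sign bookkeeping in the mixed cases, especially the interaction of the global sign $(-1)^{q(q-1)/2}$ of types (II)--(III) with the shuffle sign $\sgn(\sigma(T,T')^{-1})$, and the matching of $\lambda$-powers of different origin. A useful organising principle is that, by the remark following the definition of the cobar construction, the identity to be proved is equivalent to $\partial^{2}=0$ for the cobar differential on $\Omega(\mathscr{S}(\RB^\ac))$ with generators $s^{-1}u_n, s^{-1}v_n$; under this reformulation the pre-Jacobi identity of Subsection~\ref{Subsection: Homotopy  (co)operads} renders each pairwise cancellation transparent on tree-monomial representatives.
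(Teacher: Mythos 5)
Your proposal is correct and, in its decisive step, coincides with the paper's proof: the paper establishes the proposition precisely by reducing the homotopy-cooperad axiom to $\partial^2=0$ on the generators $s^{-1}u_n$, $s^{-1}v_n$ of the cobar construction and cancelling the resulting terms via the pre-Jacobi identity \eqref{Eq: pre-jacobi}, which is the "organising principle" you arrive at in your final paragraph. The direct tree-by-tree sign bookkeeping you sketch first would also work but is subsumed by that reformulation, so no further comparison is needed.
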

\begin{proof} One needs to show that the induced derivation $\partial$ on the cobar construction of $\mathscr{S}(\RB^\ac)$, i.e., the free operad generated by $s^{-1}\overline{\mathscr{S}(\RB^\ac)}$ is a differential, that is, $\partial^2=0$.

 Denote $s^{-1}u_n, n\geqslant 2$ (resp. $s^{-1}v_n, n\geqslant 1$) by $x_n$  (resp. $y_n$) which are  the generators of $\Omega\big(\mathscr{S}(\RB^\ac)\big)$. Notice that $|x_n|=-1$ and $|y_n|=0$. By the definition of cobar construction of coaugmented homotopy cooperads,   the action of differential $\partial$ on generators $x_n, y_n$ is given by the following formulas:
	for $n\geqslant 2$, \begin{eqnarray}\label{Eq: partial xn}
		\partial(x_n)&=&-\sum\limits_{j=2}^{n-1}x_{n-j+1}\{x_j\};\end{eqnarray}
and for $n\geqslant 1$,
		\begin{eqnarray}\label{Eq: partial yn} &&\\
		\nonumber	\ \ \partial(y_n)&=&-\sum_{k=2}^{n}\sum_{r_1+\dots+r_k=n,\atop r_1,\dots,r_k\geqslant 1}x_k\{y_{r_1},\dots,y_{r_k}\}+\sum_{2\leqslant p\leqslant n,\atop 1\leqslant q\leqslant p}\sum_{r_1+\dots+r_q+p-q=n,\atop r_1,\dots,r_q\geqslant1}\lambda^{p-q}y_{r_1}\big\{x_p\{y_{r_2},\dots,y_{r_{q}}\}\big\}.
	\end{eqnarray}
Note that \begin{eqnarray}\label{Eq: partial y1} \partial(y_1)=0.\end{eqnarray}

We just need to see that $\partial^2=0$ holds on generators $x_n,n\geqslant 2$ and $y_n,n\geqslant 1$, which can be checked by direct computations.
{   Firstly, note that the suboperad in $\Omega \mathscr{S}(\RB^\ac)$ generated by the collection $\{x_n\}_{n\geqslant 2}$ is exactly the  operadic suspension of the dg operad that governs $A_\infty$-algebras \cite{LV12}. {\rm Equation~\eqref{Eq: partial xn}} precisely represents the formulas for the differential on generators in this dg operad. Therefore, the equation $\partial^2(x_n) = 0$ holds true.}
% More explicitly, we have
%{
%\begin{eqnarray*}
%	\partial^2(x_n)&=&\partial\Big(-\sum\limits_{j=2}^{n-1}x_{n-j+1}\{x_j\}\Big)\\
%	&\stackrel{\eqref{Eq: partial xn}}{=}&-\sum\limits_{j=2}^{n-1}\partial(x_{n-j+1})\{x_j\}+\sum_{j=2}^{n-1}x_{n-j+1}\{\partial(x_j)\}\\
%	&\stackrel{\eqref{Eq: partial xn}}{=}&\sum_{i+j+k-2=n \atop
%	2\leqslant i,j,k\leqslant n-2}(x_i\{x_j\})\{x_k\}-\sum_{i+j+k-2=n \atop
%	2\leqslant i,j,k\leqslant n-2}x_i\big\{x_j\{x_k\}\big\}\\
%&\stackrel{\eqref{Eq: pre-jacobi1}}{=}&\sum_{i+j+k-2=n \atop
%	2\leqslant i,j,k\leqslant n-2}x_i\big\{x_j\{x_k\}\big\}+\sum_{i+j+k-2=n \atop
%	2\leqslant i,j,k\leqslant n-2}x_i\{x_j,x_k\}-\sum_{i+j+k-2=n \atop
%	2\leqslant i,j,k\leqslant n-2}x_i\{x_k,x_j\}-\sum_{i+j+k-2=n \atop
%	2\leqslant i,j,k\leqslant n-2}x_i\big\{x_j\{x_k\}\big\}\\
%&=&0.\end{eqnarray*}
%}

 We also  have
   \begin{align*}
&\partial^2(y_n)\\
\stackrel{\eqref{Eq: partial yn}}{=}&\partial\Big(-\sum_{k=2}^{n}\sum_{r_1+\dots+r_k=n,\atop r_1,\dots,r_k\geqslant 1}x_k\{y_{r_1},\dots,y_{r_k}\}+\sum_{2\leqslant p\leqslant n,\atop 1\leqslant q\leqslant p}\sum_{r_1+\dots+r_q+p-q=n,\atop r_1,\dots,r_q\geqslant1}\lambda^{p-q}y_{r_1}\big\{x_p\{y_{r_2},\dots,y_{r_{q}}\}\big\}\Big).\end{align*}
As $\partial$ is a derivation and by \eqref{Eq: partial y1},
\begin{align*}
&\partial^2(y_n)\\
=&-\sum_{k=2}^{n}\sum_{r_1+\dots+r_k=n,\atop r_1,\dots,r_k\geqslant 1}\partial(x_k)\{y_{r_1},\dots,y_{r_k}\}
+\sum_{k=2}^{n-1}\sum_{r_1+\dots+r_k=n,\atop r_1,\dots,r_k\geqslant 1}\sum_{j=1}^k x_k\{y_{r_1},\dots,\partial(y_{r_j}),\dots,y_{r_k}\}\\
&+\sum_{2\leqslant p\leqslant n-1,\atop 1\leqslant q\leqslant p}\sum_{r_1+\dots+r_q+p-q=n,\atop r_1,\dots,r_q\geqslant1}\lambda^{p-q}\partial(y_{r_1})\big\{x_p\{y_{r_2},\dots,y_{r_{q}}\}\big\}\\
&+\sum_{2\leqslant p\leqslant n,\atop 1\leqslant q\leqslant p}\sum_{r_1+\dots+r_q+p-q=n,\atop r_1,\dots,r_q\geqslant1}\lambda^{p-q}y_{r_1}\big\{\partial(x_p)\{y_{r_2},\dots,y_{r_{q}}\}\big\}\\
&-\sum_{2\leqslant p\leqslant n-1,\atop 1\leqslant q\leqslant p}\sum_{r_1+\dots+r_q+p-q=n,\atop r_1,\dots,r_q\geqslant1}\sum_{j=2}^q\lambda^{p-q}y_{r_1}\big\{x_p\{y_{r_2},\dots,\partial(y_{r_j}),\dots,y_{r_{q}}\}\big\}\\
\stackrel{\eqref{Eq: partial xn}\eqref{Eq: partial yn}}{=}
&\underbrace{\sum_{k=2}^{n}\sum_{r_1+\dots+r_k=n \atop r_1,\dots,r_k\geqslant 1}\sum_{i=2}^{k-1}\big(x_{k-i+1}\{x_i\}\big)\{y_{r_1},\dots,y_{r_k}\}}_{\mathrm{(I)}}\\
&- \underbrace{\sum_{k=2}^{n-1}\sum_{r_1+\dots+r_k=n \atop r_1,\dots,r_k\geqslant 1} \sum_{j=1}^k \sum_{u=2}^{r_j} \sum_{l_1+\dots+l_u=r_j,\atop l_1,\dots,l_u\geqslant 1}x_k\{y_{r_1},\dots,   x_u\{y_{l_1}, \dots, y_{l_u}\},\dots,y_{r_k}\}}_{\mathrm{(II)}}\\
&+\underbrace{\sum_{k=2}^{n-1}\sum_{r_1+\dots+r_k=n \atop r_1,\dots,r_k\geqslant 1} \sum_{j=1}^k\sum_{2\leqslant p\leqslant r_j,\atop 1\leqslant q\leqslant p}\sum_{l_1+\dots+l_q+p-q=r_j,\atop l_1,\dots,l_q\geqslant 1}\lambda^{p-q}       x_k\{y_{r_1},\dots,   y_{l_1}\big\{x_p\{y_{l_2},\dots,y_{l_{q}}\}\big\},\dots,y_{r_k}\}}_{\mathrm{(III)}}\\
&-\underbrace{\sum_{2\leqslant p\leqslant n-1,\atop 1\leqslant q\leqslant p}\sum_{r_1+\dots+r_q+p-q=n,\atop r_1,\dots,r_q\geqslant1} \sum_{k=2}^{r_1}\sum_{l_1+\dots+l_k=r_1 \atop  l_1, \dots, l_k\geqslant 1 } \lambda^{p-q}\Big(x_k\big\{y_{l_1},\dots,y_{l_k}\big\}\Big)\Big\{x_p\{y_{r_2},\dots,y_{r_{q}}\}\Big\}}_{\mathrm{(IV)}}\\
&+\tiny{\underbrace{\sum_{2\leqslant p\leqslant n-1,\atop 1\leqslant q\leqslant p}\!\sum_{r_1+\dots+r_q+p-q=n,\atop r_1,\dots,r_q\geqslant1}\! \sum_{r_1+\dots+r_j+k_1-j_1=r_1,\atop k_1\geqslant j_1\geqslant 1,k_1\geqslant 2}\! \sum_{2\leqslant k\leqslant r_1,\atop 1\leqslant j\leqslant k}\sum_{l_1+\dots+l_j+k-j=r_1,\atop l_1,\dots,l_j\geqslant 1}\!\!\!
\lambda^{p-q+k-j}\Big(y_{l_1}\big\{x_{k}\{y_{l_2},\dots, y_{l_{j}}\}\big\}\Big)\Big\{x_p\{y_{r_2},\dots,y_{r_{q}}\}\Big\}}_{\mathrm{(V)}}}\\
&-\underbrace{\sum_{2\leqslant p\leqslant n,\atop 1\leqslant q\leqslant p}\sum_{r_1+\dots+r_q+p-q=n,\atop r_1,\dots,r_q\geqslant1}\sum_{2\leqslant i\leqslant p-1}\lambda^{p-q}y_{r_1}\Big\{\big(x_{p-i+1}\big\{x_i\big\}\big)\big\{y_{r_2},\dots,y_{r_{q}}\big\}\Big\}}_{\mathrm{(VI)}} \\
&+\underbrace{\sum_{2\leqslant p\leqslant n-1\atop 1\leqslant q\leqslant p} \sum_{r_1+\dots+r_q+p-q=n,\atop r_1,\dots,r_q\geqslant1}\sum_{j=2}^q\sum_{k=2}^{r_j} \sum_{l_1+\dots+l_k=r_j\atop l_1,\dots,l_k\geqslant1}\lambda^{p-q}y_{r_1}\big\{x_p\{y_{r_2},\dots,x_{k}\{y_{l_1},\dots,y_{l_{k}}\},\dots,y_{r_{q}}\}\big\}}_{\mathrm{(VII)}}\\
&-\tiny{\underbrace{\sum_{2\leqslant p\leqslant n-1 \atop 1\leqslant q\leqslant p} \sum_{r_1+\dots+r_q+p-q=n \atop r_1,\dots,r_q\geqslant 1}\sum_{j=2}^q \sum_{2\leqslant k\leqslant r_j,\atop 1\leqslant s\leqslant k} \sum_{l_1+\dots+l_s+k-s=r_j\atop l_1,\dots,l_s\geqslant1}\lambda^{p-q+k-s} y_{r_1}\big\{x_p\{y_{r_2},\dots,y_{l_1}\{x_k\{y_{l_2},\dots,y_{l_s}\}\},\dots, y_{r_{q}}\}\big\}}_{\mathrm{(VIII)}}  }\\
 \end{align*}

By the pre-Jacobi identity \eqref{Eq: pre-jacobi}, we have
$$
	\mathrm{(I)}=\mathrm{(II)},
	\mathrm{(III)}=\mathrm{(IV)}\  \mathrm{and}\
	\mathrm{(V)+(VII)}=\mathrm{(VI)+(VIII)},
$$
 so we obtain   $\partial^2(y_n)=0$.

	\end{proof}

We will justify the following  definition by showing its cobar construction is exactly the minimal model of  $\RB$ in Section~\ref{Section: minimal model}, hence the name ``Koszul dual homotopy cooperad''.
\begin{defn}
	The homotopy cooperad $\mathscr{S}(\RB^\ac)\ot_{\mathrm{H}} \cals^{-1}$ is called the Koszul dual homotopy cooperad of $\RB$,  denoted by $\RB^\ac$.
\end{defn}

 Precisely, the underlying graded collection of ${\RB^\ac}$ is $${\RB^\ac}(n)=\bfk e_n\oplus \bfk o_n, n\geqslant 1$$ with $e_n=u_n\ot \varepsilon_n$ and $o_n=v_n\ot \varepsilon_n$, thus $|e_n|=n-1$ and $|o_n|=n$.
 The   defining operations $\{\Delta_T\}_{T\in \frakt}$  are given by the following formulae:
\begin{itemize}
	\item[(1)] For a tree $T$ of type $\mathrm{(I)}$ with $1\leqslant j\leqslant n, 1\leqslant i\leqslant n-j+1$,
	\begin{eqnarray*}
		\begin{tikzpicture}[scale=0.75,descr/.style={fill=white}]
			\tikzstyle{every node}=[thick,minimum size=5pt, inner sep=1pt]
			\node(r) at (0,-0.5)[minimum size=0pt,rectangle]{};
			\node(v-2) at(-1.8,0.5)[minimum size=0pt, label=left:{$\Delta_T(e_n)=(-1)^{(j-1)(n-i+1)}$}]{};
			\node(v0) at (-0.5,0)[draw,rectangle]{{\small $e_{n-j+1}$}};
			\node(v1-1) at (-2,1){};
			\node(v1-2) at(-0.5,1.3)[draw,rectangle]{\small$e_j$};
			\node(v1-3) at(1,1){};
			\node(v2-1)at (-1.5,2.3){};
			\node(v2-2) at(0.5,2.3){};
			\draw(v0)--(v1-1);
			\draw(v0)--(v1-3);
			\draw(v1-2)--(v2-1);
			\draw(v1-2)--(v2-2);
			\draw[dotted](-0.9,1.8)--(-0.1,1.8);
			\draw[dotted](-1,0.5)--(-0.5,0.5);
			\draw[dotted](-0.5,0.5)--(0,0.5);
			\path[-,font=\scriptsize]
			(v0) edge node[descr]{{\tiny$i$}} (v1-2);
		\end{tikzpicture}
	\end{eqnarray*}
	  We distinguish three cases when introducing $\Delta_T(o_n)$. For  $j=1$,
	\begin{eqnarray*}
		\begin{tikzpicture}[scale=0.75,descr/.style={fill=white}]
			\tikzstyle{every node}=[thick,minimum size=5pt, inner sep=1pt]
			\node(r) at (0,-0.5)[minimum size=0pt,rectangle]{};
			\node(v-1) at(-2,0.5)[minimum size=0pt, label=left:{$\Delta_T(o_n)=$}]{};
			\node(v0) at (-0.5,0)[draw,rectangle]{\small$o_n$};
			\node(v1-1) at (-1.8,1){};
			\node(v1-2) at(-0.5,1.3)[draw,rectangle]{\small$e_1$};
			\node(v1-3) at(0.8,1){};
			\node(v2-1)at (-0.5,2.1){};
			\draw(v0)--(v1-1);
			\draw(v0)--(v1-3);
			\draw(v1-2)--(v2-1);
			\draw[dotted](-1,0.5)--(-0.5,0.5);
			\draw[dotted](-0.5,0.5)--(0,0.5);
			\path[-,font=\scriptsize]
			(v0) edge node[descr]{{\tiny$i$}} (v1-2);
		\end{tikzpicture}
	\end{eqnarray*}
		for  $2\leqslant j\leqslant n-1$,
	\begin{eqnarray*}
		\begin{tikzpicture}[scale=0.75,descr/.style={fill=white}]
			\tikzstyle{every node}=[thick,minimum size=5pt, inner sep=1pt]
			\node(r) at (0,-0.5)[minimum size=0pt,rectangle]{};
			\node(v-2) at(-7.5,0.5)[minimum size=0pt, label=left:{$\Delta_T(o_n)=$}]{};
			\node(v-1) at(-3,0.5)[minimum size=0pt,label=left:{$(-1)^{(j-1)(n-i+1)}\lambda^{j-1}$}]{};
			\node(v0) at (-1.5,0)[draw, rectangle]{$o_{n-j+1}$};
			\node(v1-1) at (-3,1){};
			\node(v1-2) at(-1.5,1.2)[draw,rectangle]{\small $e_j$};
			\node(v1-3) at(0,1){};
			\node(v2-1)at (-2.5,2.2){};
			\node(v2-2) at(-0.5,2.2){};
			\draw(v0)--(v1-1);
			\draw(v0)--(v1-3);
			\draw(v1-2)--(v2-1);
			\draw(v1-2)--(v2-2);
			\draw[dotted](-1.9,1.7)--(-1.1,1.7);
			\draw[dotted](-2.1,0.5)--(-1.6,0.5);
			\draw[dotted](-1.4,0.5)--(-0.9,0.5);
			\path[-,font=\scriptsize]
			(v0) edge node[descr]{{\tiny$i$}} (v1-2);
		\end{tikzpicture}
	\end{eqnarray*}
	and for  $j=n$,
	\begin{eqnarray*}
		\begin{tikzpicture}[scale=0.75,descr/.style={fill=white}]
			\tikzstyle{every node}=[thick,minimum size=5pt, inner sep=1pt]
			\node(r) at (0,-0.5)[minimum size=0pt,rectangle]{};
			\node(va) at(-3.5,0.5)[minimum size=0pt, label=left:{$\Delta_T(o_n)=\ $}]{};
			\node(vb) at(-2.5,0.5)[minimum size=0pt,label=left:{$\lambda^{n-1}$}]{};
			\node(vc) at (-2,0)[draw, rectangle]{\small $o_1$};
			\node(v1) at(-2,1)[draw,rectangle]{\small $e_n$};
			\node(v2-1)at (-3,2){};
			\node(v2-2) at(-1,2){};
			\node(vd) at(-0.5,0.5)[minimum size=0, label=right:$+$]{};
			\node(ve) at (1.5,0)[draw, rectangle]{\small $e_1$};
			\node(ve1) at (1.5,1)[draw,rectangle]{\small $o_n$};
			\node(ve2-1) at(0.5,2){};
			\node(ve2-2) at(2.5,2){};
			\draw(v1)--(v2-1);
			\draw(v1)--(v2-2);
			\draw[dotted](-2.4,1.5)--(-1.6,1.5);
			\draw(vc)--(v1);
			\draw(ve)--(ve1);
			\draw(ve1)--(ve2-1);
			\draw(ve1)--(ve2-2);
			\draw[dotted](1.1,1.5)--(1.9,1.5);
		\end{tikzpicture}
	\end{eqnarray*}
	
	\item[(2)]  For a tree $T$ of type $\mathrm{(II)}$ with  $2\leqslant k\leqslant n,   r_1+\cdots +r_k=n, r_1, \dots, r_k\geqslant 1$,
		\begin{eqnarray*}
		\begin{tikzpicture}[scale=0.8,descr/.style={fill=white}]
			\tikzstyle{every node}=[thick,minimum size=5pt, inner sep=1pt]
			\node(v-2) at (-5,-0.5)[minimum size=0pt, label=left:{$\Delta_T(o_n)=$}]{};
			\node(v-1) at(-5,-0.4)[minimum size=0pt,label=right:{$(-1)^{\frac{k(k-1)}{2}}$}]{};
			\node(v0) at (-1,-1.5)[rectangle, draw]{\small $e_k$};
			\node(v1) at(-2.2,-0.3)[rectangle,draw]{\small $o_{r_1}$};
			\node(v1-1) at(-3,0.8){};
			\node(v1-2) at (-2,0.8){};
			\node(v3) at (0.2,-0.3)[rectangle, draw]{\small $o_{r_k}$};
			\node(v3-1) at (0,0.8){};
			\node(v3-2) at (1,0.8){};
			\node(v2-1) at (-1.3,-0.3) [rectangle,draw]{\small $o_{r_2}$};
			\node(v2-1-1) at (-1.8,0.8){};
			\node(v2-1-2) at (-0.8, 0.8){};
			%\draw [dotted,line width=1pt] (-0.7,-0.5)--(-0.2,-0.5);
			\draw [dotted,line width=0.5pt] (-0.9,-0.6)--(-0.3,-0.6);
			\draw [dotted,line width=0.5pt] (-1.5,0.5)--(-1.1,0.5);
			\draw [dotted,line width=0.5pt] (-2.6,0.5)--(-2.2, 0.5);
			\draw [dotted, line width=0.5pt] (0.2,0.5)--(0.6,0.5);
			\draw        (v0)--(v1);
			\draw         (v0)--(v3);
            \draw         (v0)--(v2-1);
			\draw         (v1)--(v1-1);
			\draw          (v1)--(v1-2);
			\draw        (v2-1)--(v2-1-1);
			\draw        (v2-1)--(v2-1-2);
			\draw        (v3)--(v3-1);
			\draw        (v3)--(v3-2);
			%\path[-,font=\scriptsize]
			%(v0) edge node[descr]{{\tiny$t$}} (v2-1);
		\end{tikzpicture}
	\end{eqnarray*}
	\item[(3)]  For a tree $T$ of type $\mathrm{(III)}$ with $2\leqslant q\leqslant p\leqslant n, 1\leqslant k_1<\dots<k_{q-1}\leqslant p, r_1+\cdots +r_q+p-q=n, 1\leqslant i\leqslant r_1, r_1, \dots, r_q\geqslant 1$,
	\begin{eqnarray*}
		\begin{tikzpicture}[scale=0.8,descr/.style={fill=white}]
			\tikzstyle{every node}=[thick,minimum size=5pt, inner sep=1pt]
			\node(v-2) at (-5,1.2)[minimum size=0pt, label=left:{$\Delta_T(o_n)=$}]{};
			\node(v-1) at(-5,1.2)[minimum size=0pt,label=right:{$(-1)^\gamma\lambda^{p-q}$}]{};
			\node(v0) at (-0.5,0)[rectangle,draw]{\small $o_{r_1}$};
			\node(v1-1) at (-2.5,1){};
			\node(v1-2) at(-0.5,1.2)[rectangle,draw]{\small $e_p$};
			\node(v1-3) at(1.5,1){};
			\node(v2-1) at(-2.4,2.45){};
			\node(v2-2) at (-1.4, 2.8)[rectangle,draw]{\small$o_{r_2}$};
			\node(v2-3) at (-0.5,2.9){};
			\node(v2-4) at(0.4,2.8)[rectangle,draw]{\small $o_{r_q}$};
			\node(v2-5) at(1.4,2.45){};
			\node(v3-1) at (-2,3.5){};
			\node(v3-2) at (-0.8,3.5){};
			\node(v3-3) at (-0.2,3.5){};
			\node(v3-4) at(1,3.5){};
			\draw(v0)--(v1-1);
			\draw(v0)--(v1-3);
			\path[-,font=\scriptsize]
			(v0) edge node[descr]{{\tiny$i$}} (v1-2);
			\draw(v1-2)--(v2-1);
			\draw(v1-2)--(v2-3);
			\draw(v1-2)--(v2-5);
			\path[-,font=\scriptsize]
			(v1-2) edge node[descr]{{\tiny$k_1$}} (v2-2)
			edge node[descr]{{\tiny$k_{q-1}$}} (v2-4);
			\draw(v2-2)--(v3-1);
			\draw(v2-2)--(v3-2);
			\draw(v2-4)--(v3-3);
			\draw(v2-4)--(v3-4);
			\draw[dotted](-1.5,0.7)--(-0.6,0.7);
			\draw[dotted](-0.5,0.7)--(0.5,0.7);
			\draw[dotted](-1,2.3)--(-0.5,2.3);
			\draw[dotted](0.7,2.3)--(1.1,2.3);
			\draw[dotted](-1.9,2.3)--(-1.3,2.3);
			\draw[dotted](-0.5,2.3)--(0.5,2.3);
			\draw[dotted](-1.6,3.2)--(-1.1,3.2);
			\draw[dotted](0.6,3.2)--(0.1,3.2);
		\end{tikzpicture}
	\end{eqnarray*}
	where \begin{eqnarray*}
		\gamma=\sum_{j=1}^{q-1}(q-j)r_j+(p-1)(q-1)+(\sum_{j=2}^qr_j+p-q)(r_1-i)+\sum_{j=2}^q(r_j-1)(p-k_{j-1})	\end{eqnarray*}
	\item[(4)]All other components of $\Delta_T, T\in \frakt$ vanish.
\end{itemize}

\section{The minimal model}\label{Section: minimal model}

In  the previous section, we   constructed a coaugmented homotopy cooperad ${\RB^\ac}$. Now, we will prove that  its cobar construction   $\Omega({\RB^\ac})$ is exactly the minimal model for the operad $\RB$.

\medskip

 Firstly, let's recall the notion of minimal model of operads.
 \begin{defn}[\cite{DCV13}] \label{Def: Minimal model of operads} A minimal model for an operad $\mathcal{P}$  is a quasi-free dg operad $ (\mathcal{F}(\calm),d)$ together with a surjective quasi-isomorphism of operads $(\mathcal{F}(\calm), \partial)\overset{\sim}{\twoheadrightarrow}\calp$, where the dg operad $(\mathcal{F}(\calm),  \partial)$  satisfies the following conditions:
 	\begin{itemize}
 		\item[(i)] the differential $\partial$ is decomposable, i.e. ,  $\partial$ takes $\calm$ to $\mathcal{F}(\calm)^{\geqslant 2}$, the subspace of $\mathcal{F}(\calm)$ consisting of elements with weight $\geqslant 2$;
 		\item[(ii)] the generating collection $\calm$ admits a decomposition $\calm=\bigoplus\limits_{i\geqslant 1}\calm_{(i)}$  such that $\partial(\calm_{(k+1)})\subset \mathcal{F}\Big(\bigoplus\limits_{i=1}^k\calm_{(i)}\Big)$ for any $k\geqslant 1$.  \end{itemize}
 \end{defn}
 \begin{thm}[\cite{DCV13}] When an operad $\calp$ admits a minimal model, it is unique up to isomorphisms.
 \end{thm}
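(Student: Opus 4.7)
The plan is to follow the classical obstruction-theoretic argument for uniqueness of minimal models, adapted from rational homotopy theory to the operadic setting. Suppose $(\mathcal{F}(M), \partial_M) \overset{p_M}{\twoheadrightarrow} P$ and $(\mathcal{F}(N), \partial_N) \overset{p_N}{\twoheadrightarrow} P$ are two minimal models. I would first construct a morphism of dg operads $\phi \colon \mathcal{F}(M) \to \mathcal{F}(N)$ lifting $p_M$ through $p_N$, then show that $\phi$ is a quasi-isomorphism, and finally show that it must be an isomorphism using the minimality conditions.

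For the construction of $\phi$, since $\mathcal{F}(M)$ is free as a graded operad, defining $\phi$ amounts to specifying a morphism of graded collections $M \to \mathcal{F}(N)$ which is compatible with the differentials and lifts $p_M|_M$ along $p_N$. I would build this inductively using the filtration $M = \bigoplus_{i \geqslant 1} M_{(i)}$ from Definition~\ref{Def: Minimal model of operads}(ii). At the $(k{+}1)$-st stage, condition (ii) guarantees that $\partial_M(M_{(k+1)})$ already lies in the sub-operad generated by $\bigoplus_{i \leqslant k} M_{(i)}$, on which $\phi$ is defined by induction, so the extension problem reduces to a lifting problem of chain maps against the surjective quasi-isomorphism $p_N$. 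Such lifts always exist because $p_N$ is a trivial fibration in the standard model structure on dg operads and $\mathcal{F}(M)$ is cofibrant; this is the operadic analogue of the classical Sullivan lifting lemma.

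To see that $\phi$ is a quasi-isomorphism, the relation $p_N \circ \phi = p_M$ together with the two-out-of-three property for quasi-isomorphisms does the job immediately. The real work is in showing that $\phi$ must be an isomorphism. Here one considers the map $\bar{\phi} \colon M \to N$ induced on indecomposables, obtained by composing $\phi|_M$ with the projection $\mathcal{F}(N) \twoheadrightarrow N$ that kills $\mathcal{F}(N)^{\geqslant 2}$. Because both differentials are decomposable, the differentials induced on the indecomposable quotients $M$ and $N$ vanish; hence on an associated graded piece of a suitable weight filtration on the two minimal models, the map induced by $\phi$ is precisely $\bar{\phi}$, and the quasi-isomorphism property forces $\bar{\phi}$ to be a bijection of graded collections. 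A standard induction on arity and on weight in $\mathcal{F}(M)$ then promotes this isomorphism of generators to an isomorphism $\phi$ of dg operads.

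The main obstacle is the last step: proving that any quasi-isomorphism between two minimal quasi-free dg operads is automatically an isomorphism. The proof requires simultaneously controlling the secondary filtration $M = \bigoplus_i M_{(i)}$ on the source and the weight filtration of the target, and checking that $\phi$ is compatible with both. This is where condition (ii) of Definition~\ref{Def: Minimal model of operads} is essential: without a filtration on the generators along which the differential is lower-triangular, one cannot guarantee that the induced differential on indecomposables vanishes, and the identification of $\bar{\phi}$ with a map computing cohomology on an associated graded breaks down. I would carry out this step by combining an induction on the filtration degree in $M$ with an induction on the internal weight in $\mathcal{F}(N)$, verifying injectivity and surjectivity of $\phi$ stage by stage.
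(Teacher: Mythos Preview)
The paper does not give its own proof of this theorem; it is stated with attribution to \cite{DCV13} and used as a black box. Your proposal is essentially the standard argument one finds in the cited reference (and, before that, in the rational-homotopy literature it generalises): lift one minimal model along the other using cofibrancy against the surjective quasi-isomorphism, deduce that the lift is a quasi-isomorphism by two-out-of-three, and then use decomposability of the differentials to see that the induced map on indecomposables computes the cohomology of an associated graded and is therefore an isomorphism, which one then promotes back to an isomorphism of dg operads by induction on weight. This is the expected route and your outline is sound; the only remark is that the compatibility of the lift $\phi$ with the secondary filtrations on $M$ and $N$ is not something you need to arrange in advance---the weight filtration on $\mathcal{F}(M)$ and $\mathcal{F}(N)$ (by number of generators) is already preserved by any operad morphism, and decomposability alone suffices to make the $E_1$-page of the associated spectral sequence equal to the generators with zero differential.
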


{
\begin{defn}  The differential graded operad $\Omega (\RB^\ac)$, denoted by $\RBinfty$, is called the operad of homotopy Rota-Baxter algebras of weight $\lambda$.
% We also denote the dg operad $\Omega (\mathscr{S}(\RB^\ac))$ by {\red $\mathcal{S}(\RBinfty)$}.
\end{defn}
}

Let us make this dg operad explicit.
The  dg operad $\RB_\infty$ is the free operad generated by the graded collection $s^{-1}\overline{{\RB^\ac}}$ endowed with the differential induced from the homotopy cooperad structure.
More precisely, $$s^{-1}\overline{{\RB^\ac}}(1)=\bfk s^{-1}o_1 \ \mathrm{and}\ s^{-1}\overline{{\RB^\ac}}(n)=\bfk s^{-1}e_n\oplus \bfk s^{-1}o_n, n\geqslant2.$$
Denote $m_n=s^{-1}e_n, n\geqslant 2$ and $T_n=s^{-1}o_n, n\geqslant 1$  respectively, so  $|m_n|=n-2, |T_n|=n-1$. The action of the  differential on these generators in $\RB_\infty$ is given by the following formulae:
\begin{eqnarray}\label{Eq: defining HRB 1} \forall n\geqslant 2,\quad
	\partial{m_n} =  \sum_{j=2}^{n-1}\sum_{i=1}^{n-j+1}(-1)^{i+j(n-i)}m_{n-j+1}\circ_i m_j
\end{eqnarray} and  for any $ n\geqslant 1$,
{\small
$$\begin{array}{ll} \label{Eq: defining HRB 2}   &  \\
	\notag  &     \partial T_n\\
\notag=&\sum\limits_{k=2}^n\sum_{l_1+\cdots+l_k=n\atop l_1, \dots, l_k\geqslant 1}(-1)^{\alpha}\Big(\cdots\big((m_k\circ_1 T_{l_1})\circ_{l_1+1}T_{l_2}\big)\cdots\Big)\circ_{l_1+\cdots+l_{k-1}+1}T_{l_k}+ \sum\limits_{{\small\substack{2\leqslant p\leqslant n \\ 1\leqslant q\leqslant p
	}}}\sum\limits_{\small\substack{ r_1+\dots+r_q+p-q=n\\r_1, \dots, r_q\geqslant 1\\1\leqslant i\leqslant r_1\\1\leqslant k_1<\dots< k_{q-1}\leqslant p }}\\
	\notag     & (-1)^{\beta}\lambda^{p-q}\Big(T_{r_1}\circ_i \Big(\big((\cdots(( m_p\circ_{k_1}T_{r_2})\circ_{k_2+r_2-1} T_{r_3}))\cdots \big)\circ_{k_{q-1}+r_2+\dots+r_{q-1}-q+2}T_{r_q}\Big)\Big),
\end{array}$$
}
where the signs $(-1)^{\alpha}$ and $(-1)^{\beta}$ are   respectively
\begin{eqnarray*}\label{Eq: sign   alpha'}
	\alpha&=&1+\frac{k(k-1)}{2}+\sum_{j=1}^k(k-j)l_j=1+\sum_{j=1}^k(k-j)(l_j-1),\\
	\label{Eq: sign   beta'}\beta &=& 1+i+\big(p+\sum\limits_{j=2}^q(r_j-1)\big)\big(r_1-i\big)+\sum\limits_{j=2}^q(r_j-1)(p-k_{j-1}).	\end{eqnarray*}

Let us  display  the elements in the dg operad $\RBinfty$ using labelled planar rooted trees.
The corolla with $n$ leaves and a black vertex   represents the  generator  $m_n, n\geqslant 2$, while the generators $T_n, n\geqslant 1$  by   the corolla with $n$ leaves and a white vertex:
\begin{eqnarray*}
	\begin{tikzpicture}[scale=0.4]
		\tikzstyle{every node}=[thick,minimum size=4pt, inner sep=1pt]
		%\node(a) at (-4,0.5){\begin{huge}$\partial$\end{huge}};
		\node[circle, fill=black, label=right:$m_n$] (b0) at (-2,-0.5)  {};
		%  \node (1a) at (-0.5,1.5);
		\node (b1) at (-3.5,1.5)  [minimum size=0pt,label=above:$1$]{};
		 \node (b2) at (-2,1.5)  [minimum size=0pt  ]{};
		\node (b3) at (-0.5,1.5)  [minimum size=0pt,label=above:$n$]{};
		\draw        (b0)--(b1);
		\draw        (b0)--(b2);
		\draw        (b0)--(b3);
		\draw [dotted,line width=0.5pt] (-3,1)--(-2.2,1);
		\draw [dotted,line width=0.5pt] (-1.8,1)--(-1,1);
	\end{tikzpicture}
	\hspace{8mm}
	\begin{tikzpicture}[scale=0.4]
		\tikzstyle{every node}=[thick,minimum size=4pt, inner sep=1pt]
	%	\node(a) at (-4,0.5){\begin{huge}$\partial$\end{huge}};
		\node[circle, draw, label=right:$T_n$] (b0) at (-2,-0.5)  {};
		%  \node (1a) at (-0.5,1.5);
		\node (b1) at (-3.5,1.5)  [minimum size=0pt,label=above:$1$]{};
		 \node (b2) at (-2,1.5)  [minimum size=0pt,label=above: ]{};
		\node (b3) at (-0.5,1.5)  [minimum size=0pt,label=above:$n$]{};
		\draw        (b0)--(b1);
		\draw        (b0)--(b2);
		\draw        (b0)--(b3);
		\draw [dotted,line width=0.5pt] (-3,1)--(-2.2,1);
		\draw [dotted,line width=0.5pt] (-1.8,1)--(-1,1);
	\end{tikzpicture}
\end{eqnarray*}
In this means, the action of the  differential operator $\partial$ on generators can be expressed by trees as follows:
\begin{eqnarray*}
	\begin{tikzpicture}[scale=0.6]
		\tikzstyle{every node}=[thick,minimum size=4pt, inner sep=1pt]
		\node(a) at (-4,0.25){\large{$\partial$}};
		\node[circle, fill=black, label=right:$m_n$] (b0) at (-2,-0.5)  {};
		%  \node (1a) at (-0.5,1.5);
		\node (b1) at (-3.5,1.5)  [minimum size=0pt,label=above:$1$]{};
		 \node (b2) at (-2,1.5)  [minimum size=0pt  ]{};
		\node (b3) at (-0.5,1.5)  [minimum size=0pt,label=above:$n$]{};
		\draw        (b0)--(b1);
		\draw        (b0)--(b2);
		\draw        (b0)--(b3);
		\draw [dotted,line width=0.5pt] (-3,1)--(-2.2,1);
		\draw [dotted,line width=0.5pt] (-1.8,1)--(-1,1);
	\end{tikzpicture}
	&
	\begin{tikzpicture}
		\node(0){{$=\sum\limits_{j=2}^{n-1} \sum\limits_{i=1}^{n-j+1}(-1)^{i+j(n-i)}$}};
	\end{tikzpicture}
	&
	\begin{tikzpicture}[scale=0.8]
		\tikzstyle{every node}=[thick,minimum size=4pt, inner sep=1pt]
		\node(e0) at (0,-1.5)[circle, fill=black,label=right:$\ m_{n-j+1}$]{};
		\node(e1) at(-1.5,0){{\tiny$1$}};
		\node(e2-0) at (0,-0.5){{\tiny$i$}};
		\node(e3) at (1.5,0){{\tiny{$n-j+1$}}};
		\node(e2-1) at (0,0.5) [circle,fill=black,label=right: $\ m_j$]{};
		\node(e2-1-1) at (-1,1.5){{\tiny$1$}};
		\node(e2-1-2) at (1, 1.5){{\tiny $j$}};
		\draw [dotted,line width=0.5pt] (-0.7,-0.5)--(-0.2,-0.5);
		\draw [dotted,line width=0.5pt] (0.3,-0.5)--(0.8,-0.5);
		\draw [dotted,line width=0.5pt] (-0.4,1)--(0.4,1);
		\draw        (e0)--(e1);
		\draw         (e0)--(e3);
		\draw         (e0)--(e2-0);
		\draw         (e2-0)--(e2-1);
		\draw        (e2-1)--(e2-1-1);
		\draw        (e2-1)--(e2-1-2);
	\end{tikzpicture}	
\end{eqnarray*}

\begin{eqnarray*}
	\begin{tikzpicture}[scale=0.6]
		\tikzstyle{every node}=[thick,minimum size=4pt, inner sep=1pt]
		\node(a) at (-4,0.25){\large{$\partial$}};
		\node[circle, draw, label=right:$T_n$] (b0) at (-2,-0.5)  {};
		%  \node (1a) at (-0.5,1.5);
		\node (b1) at (-3.5,1.5)  [minimum size=0pt,label=above:$1$]{};
		 \node (b2) at (-2,1.5)  [minimum size=0pt,label=above: ]{};
		\node (b3) at (-0.5,1.5)  [minimum size=0pt,label=above:$n$]{};
		\draw        (b0)--(b1);
		\draw        (b0)--(b2);
		\draw        (b0)--(b3);
		\draw [dotted,line width=0.5pt] (-3,1)--(-2.2,1);
		\draw [dotted,line width=0.5pt] (-1.8,1)--(-1,1);
	\end{tikzpicture}
	&
	\begin{tikzpicture}
		\node(0){{$= \sum\limits_{k=2}^n\sum\limits_{l_1+\cdots+l_k=n \atop l_1, \dots, l_k\geqslant 1}(-1)^{\alpha}$}};
	\end{tikzpicture}
	&
	\begin{tikzpicture}[scale=0.8]
		\tikzstyle{every node}=[thick,minimum size=4pt, inner sep=1pt]
		\node(e0) at (0,-1.5)[circle, fill=black,label=right:$m_{k}$]{};
		\node(e1) at(-1.2,-0.3)[circle, draw, label=left:$T_{l_1}$]{};
		\node(e1-1) at(-2,0.8){};
		\node(e1-2) at (-1,0.8){};
		% \node(e2-0) at (0,-0.7){{\tiny$2$}};
		\node(e3) at (1.2,-0.3)[draw, circle, label=right: $T_{l_k}$]{};
		\node(e3-1) at (1,0.8){};
		\node(e3-2) at (2,0.8){};
		\node(e2-1) at (-0.3,-0.3) [draw,circle,label=right: $T_{l_2}$]{};
		\node(e2-1-1) at (-0.5,1){};
		\node(e2-1-2) at (0.5, 1){};
		%\draw [dotted,line width=1pt] (-0.7,-0.5)--(-0.2,-0.5);
		\draw [dotted,line width=0.5pt] (0.1,-0.6)--(0.8,-0.6);
		\draw [dotted,line width=0.5pt] (-0.3,0.5)--(0.1,0.5);
		\draw [dotted,line width=0.5pt] (-1.6,0.5)--(-1.2, 0.5);
		\draw [dotted, line width=0.5pt] (1.2,0.5)--(1.6,0.5);
		\draw        (e0)--(e1);
		\draw         (e0)--(e3);
		\draw         (e1)--(e1-1);
		\draw          (e1)--(e1-2);
		\draw         (e0)--(e2-1);
		\draw        (e2-1)--(e2-1-1);
		\draw        (e2-1)--(e2-1-2);
		\draw        (e3)--(e3-1);
		\draw        (e3)--(e3-2);
		
	\end{tikzpicture}	\\
	&\begin{tikzpicture}
		\node(0){{$+\sum\limits_{{\small\substack{2\leqslant p\leqslant n \\ 1\leqslant q\leqslant p
				}}}\sum\limits_{\small\substack{ r_1+\dots+r_q+p-q=n\\r_1, \dots, r_q\geqslant 1\\1\leqslant i\leqslant r_1\\1\leqslant k_1<\dots< k_{q-1}\leqslant p }}(-1)^{\beta}\lambda^{p-q}$}};
	\end{tikzpicture}&
	\begin{tikzpicture}[scale=0.9]
		\tikzstyle{every node}=[thick,minimum size=4pt, inner sep=1pt]
		\node(e0) at (0,-1.5)[circle, draw,label=right:{\footnotesize $\ T_{r_1}$}]{};
		\node(e1) at(-1.8,-0.3){};
		\node(e1-1) at(-2,0.8){};
		\node(e1-2) at (-1,0.8){};
		\node(e2-0) at (0,-0.9){{\tiny$i$}};
		\node(e3) at (1.8,-0.3){};
		\node(e3-1) at (1,0.8){};
		\node(e3-2) at (2,0.8){};
		\node(e2-1) at (0,-0.3) [fill=black,draw,circle,label=right: {\scriptsize $\ \ m_p$}]{};
		\node(e2-1-1) at (-1.9,1){};
		\node(e2-1-2) at(-0.6,0.6){{\tiny$k_1$}};
		\node(e2-1-2-0) at (-1,1.2)[draw, circle,label=left:{\tiny $T_{r_2}$}]{};
		\node(e2-1-2-1) at (-1.4,1.9){};
		\node(e2-1-2-2) at (-0.6,1.9){};
		\node(e2-1-3) at(-0.4,1.5){};
		\node(e2-1-4) at (0.4,1.5){};
		\node(e2-1-5) at (0.6,0.6){{\tiny $k_{q-1}$}};
		\node(e2-1-5-0) at (1,1.2) [circle, draw,label=right: {\tiny $\ T_{r_q}$}]{};
		\node(e2-1-5-1) at (0.6,1.9){};
		\node(e2-1-5-2) at (1.4,1.9){};
		\node(e2-1-6) at (1.9, 1){};
		\draw [dotted,line width=0.5pt] (-0.7,-0.7)--(-0.2,-0.7);
		\draw [dotted,line width=0.5pt] (0.3,-0.7)--(0.8,-0.7);
		\draw [dotted, line width=0.5pt] (-1.3,0.9)--(1.3,0.9);
		\draw        (e0)--(e1);
		\draw [dotted, line width =0.5pt](-1.2,1.7)--(-0.7,1.7);
		\draw [dotted, line width=0.5pt](0.7,1.7)--(1.2,1.7);
		\draw         (e0)--(e3);
		\draw         (e0)--(e2-0);
		\draw         (e2-0)--(e2-1);
		\draw        (e2-1)--(e2-1-1);
		\draw      (e2-1-2)--(e2-1-2-0);
		\draw        (e2-1)--(e2-1-2);
		\draw        (e2-1)--(e2-1-3);
		\draw        (e2-1)--(e2-1-4);	
		\draw        (e2-1)--(e2-1-5);
		\draw         (e2-1-5)--(e2-1-5-0);
		\draw        (e2-1)--(e2-1-6);
		\draw (e2-1-2-0)--(e2-1-2-1);
		\draw (e2-1-2-0)--(e2-1-2-2);
		\draw (e2-1-5-0)--(e2-1-5-1);
		\draw (e2-1-5-0)--(e2-1-5-2);
	\end{tikzpicture}
\end{eqnarray*}

\begin{remark}
It should be noted that the collection spanned by the trees  which  only have black vertices is a dg suboperad of   $\RBinfty$, and this suboperad is exactly the $A_\infty$-operad.
\end{remark}

The following result is the main result of this section, whose proof occupies the rest of this section.
\begin{thm}\label{Thm: Minimal model}
	The dg operad $\RBinfty$ is the minimal model of the operad $\RB$.
\end{thm}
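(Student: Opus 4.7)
The plan is to construct a surjective dg-operad morphism $p:\RBinfty\to \RB$ and verify both that $\RBinfty$ satisfies the minimality conditions of Definition~\ref{Def: Minimal model of operads} and that $p$ is a quasi-isomorphism. I define $p$ on generators by $p(m_2)=\mu$, $p(T_1)=T$, and $p(m_n)=0=p(T_l)$ for $n\geqslant 3$ and $l\geqslant 2$. Since $\RB$ carries the trivial differential, it suffices to check $p\circ \partial=0$ on each generator. The cases $m_2,T_1$ are vacuous as $\partial m_2=0=\partial T_1$. Substituting $n=3$ into~\eqref{Eq: defining HRB 1} gives $\partial m_3=-m_2\circ_1 m_2+m_2\circ_2 m_2$, whose image under $p$ is the signed associativity relation. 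Substituting $n=2$ into~\eqref{Eq: defining HRB 2} reproduces, up to signs, the weight-$\lambda$ Rota-Baxter relation~\eqref{Eq: Rota-Baxter relation in terms of maps}. For $n\geqslant 4$ (resp.\ $n\geqslant 3$), every summand in $\partial m_n$ (resp.\ $\partial T_n$) carries a factor $m_k$ with $k\geqslant 3$ or $T_l$ with $l\geqslant 2$, hence is killed by $p$. Surjectivity is immediate as $\mu,T$ generate $\RB$; decomposability of $\partial$ is visible on the face of~\eqref{Eq: defining HRB 1}--\eqref{Eq: defining HRB 2}; and setting $M_{(i)}=\bfk m_{i+1}\oplus \bfk T_i$ (with $m_2,T_1\in M_{(1)}$) gives a filtration with $\partial(M_{(n)})\subseteq \calf(\bigoplus_{i\leqslant n-1}M_{(i)})$, since all generator indices appearing on the right-hand sides of~\eqref{Eq: defining HRB 1}--\eqref{Eq: defining HRB 2} are strictly less than $n$.

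The substantive step is showing $p$ is a quasi-isomorphism, and my plan follows the Gr\"obner-theoretic strategy inspired by Dotsenko--Khoroshkin~\cite{DK13} that is alluded to in the introduction. The operad $\RB$ admits a Gr\"obner basis whose leading monomials are $\mu\circ_2\mu$ and $(\mu\circ_1 T)\circ_2 T$; normal forms provide an explicit basis of $\RB(n)$ by planar trees decorated with $\mu,T$ avoiding both patterns. I will put a filtration on $\RBinfty(n)$ compatible with $\partial$ and with $p$, whose associated graded is the cobar construction of the homotopy cooperad of the corresponding \emph{monomial} operad (obtained by sending $\lambda\to 0$ and replacing the two defining relations by their leading terms). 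The three families (I)--(III) of trees governing $\Delta_T$ on $\RB^\ac$ are engineered precisely so that in this associated graded the cobar differential splits into (a) the $A_\infty$ differential on the dg suboperad generated by the $m_n$, which resolves $\mathbf{As}$, and (b) a complex governing the Rota-Baxter generators $T_n$ that is acyclic by a direct inclusion-exclusion over tree divisors, exactly as in~\cite{DK13}. Arity-wise collapse of the resulting spectral sequence then identifies $H_0(\RBinfty(n))$ with $\RB(n)$ via $p$ and shows vanishing of higher homology.

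The principal obstacle is this last comparison: verifying that the $\lambda$-dependent ``descent'' contributions (the type~(III) terms carrying the weight $\lambda^{p-q}$) and the associativity-type terms of type~(I) fit into the filtration so as not to contribute to the associated graded differential, while the remaining pieces assemble into the monomial cobar complex whose acyclicity is amenable to inclusion-exclusion. The bookkeeping of signs, shuffle indices $k_1<\cdots<k_{q-1}$ and the insertion index $i$ is delicate, but once this identification is in place the acyclicity is a standard computation and the quasi-isomorphism follows formally. An alternative, more direct route is indicated by Dotsenko's observation derived from~\cite{DK13}.
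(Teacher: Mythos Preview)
Your proposal is correct in outline but follows a different route from the paper's primary proof. The paper's main argument does \emph{not} use a filtration/spectral sequence comparison with the monomial operad: instead it constructs an explicit contracting homotopy $\mathfrak{H}:\RBinfty\to\RBinfty$ of degree $1$ satisfying $\partial\mathfrak{H}+\mathfrak{H}\partial=\mathrm{Id}$ in all positive degrees. This is done by introducing a graded path-lexicographic order on tree monomials, identifying the leading term $\widehat{\mathcal S}$ of each $\partial\mathcal S$ (namely $m_{n-1}\circ_1 m_2$ and $(T_{n-1}\circ_1 m_2)\circ_1 T_1$), defining the notion of an \emph{effective} tree monomial with its \emph{effective divisor}, and then building $\mathfrak{H}$ recursively by collapsing that divisor. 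The approach you sketch---filter $\RBinfty(n)$ so that the associated graded becomes the minimal model of the monomial quotient $\mathfrak{Tr}/\langle G\rangle$, invoke~\cite{DK13} for its acyclicity in positive degrees, and conclude by a spectral sequence---is precisely the alternative proof recorded in Remark~\ref{rem: dotsenko}, attributed there to Dotsenko.

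Both approaches work; the explicit homotopy is self-contained and gives a concrete contracting map, while your route is shorter once~\cite{DK13} is taken as input but requires pinning down the filtration. A few points where your sketch needs tightening: (i) the correct filtration is not obtained by ``sending $\lambda\to 0$'' but by the map $\omega:\RBinfty\to\mathfrak{Tr}$ of Remark~\ref{rem: dotsenko}, which records the underlying word in $\mu,T$ of each generator and then filters by a weight-then-path-lexicographic order on $\mathfrak{Tr}(n)$; (ii) the associated graded is ${}_m\RBinfty(n)$ as a whole, not a direct sum of an $A_\infty$ piece and a separate Rota-Baxter piece---the black and white vertices still interact in the monomial differential; (iii) your sentence ``all generator indices \dots\ are strictly less than $n$'' is literally false since $m_n$ appears in $\partial T_n$ (take $k=n$, all $l_j=1$), though your filtration claim survives because $m_n\in M_{(n-1)}$ under your indexing.
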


\bigskip

In order to prove Theorem~\ref{Thm: Minimal model},  we are going to construct a quasi-isomorphism of dg operads $\RBinfty\xrightarrow{~}\RB$, where $\RB$ is considered as a dg operad concentrated in degree 0.

Recall that the  operad   $\RB$  is generated by a unary operator $T$ and a binary operator $\mu$ with  operadic relations: $$ \mu\circ_1\mu-\mu\circ_2\mu\quad \mathrm{and}\quad (\mu\circ_1T)\circ_2T-(T\circ_1\mu)\circ_1T-(T\circ_1\mu)\circ_2T-\lambda T\circ_1\mu.$$
\begin{lem} There exists a  natural surjective  map $\phi: \RBinfty\twoheadrightarrow \RB$ of dg operads sending $m_2$ (resp. $T_1$, all other generators) to $\mu$ (resp. $T$, zero), which induces an isomorphism
  $\rmH_0(\RBinfty)  \cong \RB$.

\end{lem}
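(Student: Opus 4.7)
The plan is to define $\phi$ on generators by $\phi(m_2)=\mu$, $\phi(T_1)=T$ and $\phi(m_n)=0=\phi(T_n)$ for all other generators, then extend uniquely to a morphism of free graded operads $\RBinfty\to\RB$. Since $\RB$ is generated by $\mu$ and $T$, the map is automatically surjective, and since $\RB$ carries the zero differential, checking compatibility with $\partial$ reduces to showing $\phi\circ\partial=0$ on each generator $m_n$ and $T_n$.

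I would handle $\partial m_n$ first: the formula \eqref{Eq: defining HRB 1} is a sum of terms $m_{n-j+1}\circ_i m_j$ with $j\geqslant 2$ and $n-j+1\geqslant 2$, and a term survives $\phi$ only when both factors equal $m_2$, forcing $n=3$. A direct sign computation gives $\partial m_3=-m_2\circ_1 m_2+m_2\circ_2 m_2$, which $\phi$ sends to the negative of the associator in $\RB$. For $\partial T_n$, an analogous analysis of the two sums in \eqref{Eq: defining HRB 2} shows that no term survives $\phi$ unless every $m$-label equals $m_2$ and every $T$-label equals $T_1$; in the first sum this forces $k=2$ and $l_1=l_2=1$, in the second $p=q=2$ and $r_1=r_2=1$, and in both cases $n=2$. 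Evaluating the signs $(-1)^\alpha$ and $(-1)^\beta$ at these values, $\phi(\partial T_2)$ is the negative of the defining Rota-Baxter relation recalled in Remark~\ref{Remark: Koszul duality not applied to RB algebras}, hence vanishes in $\RB$; for $n\geqslant 3$ the entire sum $\partial T_n$ lies termwise in $\ker\phi$.

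For the $\rH_0$ statement, I note that $|m_n|=n-2\geqslant 0$ and $|T_n|=n-1\geqslant 0$, so $\RBinfty$ is concentrated in non-negative degrees and $\rH_0(\RBinfty)=\RBinfty_0/\partial(\RBinfty_1)$. The degree-zero part $\RBinfty_0$ is precisely the free operad $\mathcal{F}(m_2,T_1)$, identified by $\phi$ with the free operad on one binary and one unary generator. A tree basis element of $\RBinfty_1$ has a single vertex decorated by a degree-one generator ($m_3$ or $T_2$) and all other vertices decorated by $m_2$ or $T_1$; as $\partial m_2=\partial T_1=0$, the graded Leibniz rule forces $\partial$ to substitute $\partial m_3$ or $\partial T_2$ at that distinguished vertex. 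Hence $\partial(\RBinfty_1)$ equals the operadic ideal of $\mathcal{F}(m_2,T_1)$ generated by $\partial m_3$ and $\partial T_2$, which under $\phi$ becomes the ideal generated by the associativity and Rota-Baxter relations defining $\RB$. The induced map $\rH_0(\RBinfty)\to\RB$ is thus an isomorphism.

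The only genuine work is the two sign verifications for $\phi(\partial m_3)$ and $\phi(\partial T_2)$, which must recover exactly the defining relations of $\RB$; beyond that the argument is formal. No Koszul resolution is required at this stage, consistent with the fact that the lemma is strictly weaker than the quasi-isomorphism asserted in Theorem~\ref{Thm: Minimal model}.
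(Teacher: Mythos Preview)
Your approach is correct and is essentially the same as the paper's: identify $\RBinfty_0$ with the free operad on $m_2, T_1$, observe that $\partial(\RBinfty_1)$ is the operadic ideal generated by $\partial m_3$ and $\partial T_2$, and check that these two elements are (up to sign) the associativity and Rota-Baxter relations defining $\RB$. One small correction to your case analysis: in the second sum of $\partial T_n$ the case $p=2$, $q=1$, $r_1=1$ also survives under $\phi$ and contributes the term $\lambda\, T_1\circ_1 m_2$, so the surviving cases are not only $p=q=2$; you nonetheless state the correct conclusion that $\phi(\partial T_2)$ is the full Rota-Baxter relation, so this is merely an omission in the bookkeeping rather than a genuine gap.
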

\begin{proof}
  The degree zero part of $\RBinfty$ is the free graded operad  generated by $\{m_2, T_1\}$. The image of $\partial$  in this  degree zero part is the operadic ideal   generated by $\partial T_2$ and $\partial m_3$. By definition, we have
   \begin{eqnarray*}
 	\partial(m_3)&=&m_2\circ_1 m_2-m_2\circ_2m_2,\\
 	\partial (T_2)&=& T_1\circ_1(m_2\circ_1 T_1)+T_1\circ_1(m_2\circ_2 T_1)+\lambda T_1\circ_1 m_2-(m_2\circ_1T_1)\circ_2 T_1.
 	\end{eqnarray*}
Thus  the map  $\phi: \RBinfty\twoheadrightarrow \RB$ induces the isomorphism  $\rmH_0(\RBinfty)  \cong \RB$.
\end{proof}

\medskip

   To show that  $\phi: \RBinfty\twoheadrightarrow \RB$ is a quasi-isomorphism, we just need to prove that $\rmH_i(\RBinfty)=0$ for all $i\geqslant 1$. This will be achieved by constructing explicitly a homotopy. To this end, we  need to establish a graded  path-lexicographic ordering on $\RBinfty$.

  Each tree monomial gives rise to a path sequence; for details, see \cite[Chapter 3]{BD16}.  More precisely,
 to any tree monomial $\mathcal{T}$ with  $n$ leaves (written as $\mbox{arity}(\mathcal{T})=n$), we can associate   with a sequence   $(x_1, \dots, x_n)$  where  $x_i$ is the word formed by   generators of $\RBinfty$ corresponding to the vertices along the unique path from the root of $\mathcal{T}$  to its $i$-th leaf.

  For two graded tree monomials $\mathcal{T},\mathcal{T}'$, we compare $\mathcal{T},\mathcal{T}'$ in the following way:
 \begin{itemize}
 	\item[(i)] If $\mbox{arity}(\mathcal{T})>\mbox{arity}(\mathcal{T}')$, then  $\mathcal{T}>\mathcal{T}'$;
 	\item[(ii)] if $\mbox{arity}(\mathcal{T})=\mbox{arity}(\mathcal{T}')$, and $\deg(\mathcal{T})>\deg(\mathcal{T}')$, then $\mathcal{T}>\mathcal{T}'$, where $\deg(\mathcal{T})$ is the sum of the degrees of all generators of $\RBinfty$ appearing in    $\mathcal{T}$;
 	\item[(iii)] if $\mbox{arity}(\mathcal{T})=\mbox{arity}(\mathcal{T}')(=n), \deg(\mathcal{T})=\deg(\mathcal{T}')$, then $\mathcal{T}>\mathcal{T}'$ if the path sequences   $(x_1,\dots,x_n), (x'_1,\dots,x_n')$ associated to $\mathcal{T}, \mathcal{T}'$ satisfies $(x_1,\dots,x_n)>(x_1',\dots,x_n')$ with respect to the length-lexicographic order of words induced by $$T_1<m_2<T_2<m_3<\cdots<T_n<m_{n+1}<T_{n+1}<\cdots.$$

 \end{itemize}
 It is ready to see that this is a well order.
Under this order, the leading terms in the expansion of $\partial(m_n), \partial(T_n)$ are the following tree monomials respectively:
\begin{figure}[h]
	\begin{tikzpicture}[scale=0.8]
		\tikzstyle{every node}=[thick,minimum size=4pt, inner sep=1pt]
		\node(1) at (0,0) [draw, circle, fill=black, label=right:$\ m_{n-1}$]{};
		\node(2-1) at (-1,1) [draw, circle, fill=black, label=right: $\ m_2$]{};
		\node(3-1) at (-2,2){};
		\node(3-2) at (0,2){};
		\node(2-2) at (0,1){};
		\node(2-3) at (1,1){};
		\draw (1)--(2-1);
		\draw  (1)--(2-2);
		\draw  (1)--(2-3);
		\draw (2-1)--(3-1);
		\draw (2-1)--(3-2);
		\draw [dotted,line width=0.5pt](-0.4,0.5)--(0.4,0.5);
	\end{tikzpicture}
\hspace{8mm}
\begin{tikzpicture}[scale=0.75]
	\tikzstyle{every node}=[thick,minimum size=4pt, inner sep=1pt]
	\node(1) at (0,0) [draw, circle, label=right:$\ T_{n-1}$]{};
	\node(2-1) at (-1,1) [draw, circle, fill=black, label=right: $\ m_2$]{};
	\node(3-1) at (-2,2)[draw, circle, label=right: $T_1$]{};
	\node(3-2) at (0,2){};
	\node(2-2) at (0,1){};
	\node(2-3) at (1,1){};
	\node(4) at (-3,3){};
	\draw (1)--(2-1);
	\draw  (1)--(2-2);
	\draw  (1)--(2-3);
	\draw (2-1)--(3-1);
	\draw (2-1)--(3-2);
	\draw (3-1)--(4);
	\draw [dotted,line width=0.5pt](-0.4,0.5)--(0.4,0.5);
\end{tikzpicture}
\end{figure}

	Let $\mathcal{S}$ be a generator of degree $\geqslant 1$ in $ \RBinfty$. Denote the leading monomial of $\partial \mathcal{S}$ by $\widehat{\mathcal{S}}$ and the  coefficient of $\widehat{\mathcal{S}}$ in $\partial$ is written  as $l_\mathcal{S}$.  A tree monomial of the form $\widehat{\mathcal{S}}$ is called typical, so all typical tree {  monomials} are  of the form $$ m_{n-1}\circ_1 m_2  \ \ \mathrm{and}\ \ (T_{n-1}\circ_1 m_2)\circ_1 T_1, $$
which are illustrated above.
 It is  easily seen that the coefficients $l_\mathcal{S}$ are always $\pm 1$.

\medskip

 \begin{defn}\label{Def: effective tree monomials}  A tree monomial $\mathcal{T}$ in $\RBinfty$ is called effective if $\mathcal{T}$ satisfies the following conditions:
 	\begin{itemize}
 		\item[(i)] There exists a typical divisor $\mathcal{T}'=\widehat{\mathcal{S}}$ in $\mathcal{T}$ such that  on the path from the root of $\mathcal{T}'$ to the leftmost leaf $l$ of $\mathcal{T}$ above the root of $\mathcal{T}'$, there are no other typical divisors, and there are no {  vertices} of positive degree on this path except the root of $\mathcal{T}'$ possibly.
 		\item[(ii)] For any leaf $l'$ of $\mathcal{T}$ which lies on the left of $l$, there are no vertices of positive degree and no typical divisors  on the path from the root of $\mathcal{T}$ to $l'$.
 	\end{itemize}
The typical divisor $\mathcal{T}'$ is called the effective divisor of $\mathcal{T}$ and  the leaf $l$ is called the typical leaf of $\mathcal{T}$.
\end{defn}

Morally, the effective divisor of a tree monomial $\mathcal{T}$ is the left-upper-most typical divisor of $\mathcal{T}$.
It can be easily seen that for the effective divisor $\mathcal{T}'$ in $\mathcal{T}$ with effective leaf $l$, any vertex in $\mathcal{T}'$ doesn't belong to the path from root of $\mathcal{T}$ to any leaf $l'$ located on the left of $l$.

\begin{exam}Consider three tree monomials as follows:
\begin{eqnarray*}
\begin{tikzpicture}[scale=0.9]
	\tikzstyle{every node}=[thick,minimum size=4pt, inner sep=1pt]
	\node(1)at (0,0)[circle, draw, fill=black]{};
	\node(2-1) at (-0.5,0.5){};
	\node(2-2) at (0.5,0.5)[circle,draw, fill=black]{};
	\node(3-1) at (0,1)[circle, draw]{};
	\node(3-2) at (0.3,1){};
	\node(3-3) at (0.7,1){};
	\node(3-4) at (1,1){};
	\node(4-1) at (-0.5,1.5)[circle, draw,fill=black]{};
	\node (4-2) at(0,1.5){};
	\node(4-3) at (0.5,1.5){};
	\node(5-1) at (-1,2)[circle, draw]{};
	\node(5-2) at(0,2)[circle, draw, fill=black] {};
	\node (6-1) at (-1.5, 2.5)[circle,draw, fill=black]{};
	\node(6-2) at (-0.5, 2.5)[circle, draw, fill=black]{};
	\node(6-3) at(0,2.5){};
	\node(6-4) at (0.5,2.5){};
	\node(7-1) at (-2,3)[minimum size=0pt, label=above:$l$]{};
	\node(7-2) at (-1,3){};
	\node(7-3) at(-0.9,3){};
	\node(7-4) at (0,3){};
	\draw (1)--(2-1);
	\draw (1)--(2-2);
	\draw (2-2)--(3-1);
	\draw (2-2)--(3-2);
	\draw (2-2)--(3-3);
	\draw (2-2)--(3-4);
	\draw (3-1)--(4-1);
	\draw (3-1)--(4-2);
	\draw (3-1)--(4-3);
	\draw (4-1)--(5-1);
	\draw (4-1)--(5-2);
	\draw (5-1)--(6-1);
	\draw (6-1)--(7-1);
	\draw (6-1)--(7-2);
	\draw (5-2)--(6-2);
	\draw (5-2)--(6-3);
	\draw (5-2)--(6-4);
	\draw(6-2)--(7-3);
	\draw(6-2)--(7-4);
\draw[dashed,blue](-1.3,1.7) to [in=150, out=120] (-0.7,2.3) ;
\draw[dashed,blue](-1.3,1.7)--(-0.3,0.7);
\draw[dashed,blue] (0.3,1.3)to [in=-30, out=-60] (-0.3,0.7);
	\node[minimum size=0pt,inner sep=0pt,label=below:$(\mathcal{T}_1)$] (name) at (0,-0.3){};
	\draw[dashed, blue](0.3,1.3)--(-0.7,2.3);
\end{tikzpicture}
\hspace{4mm}
\begin{tikzpicture}[scale=0.9]
	\tikzstyle{every node}=[thick,minimum size=4pt, inner sep=1pt]
	\node(1)at (0,0)[circle, draw]{};
	\node(2-1) at (-0.5,0.5)[minimum size=0pt, label=left:$\red \times$]{};
	\node(2-2) at (0.5,0.5)[circle,draw, fill=black]{};
	\node(3-1) at (0,1)[circle, draw]{};
	\node(3-2) at (0.3,1){};
	\node(3-3) at (0.7,1){};
	\node(3-4) at (1,1){};
	\node(4-1) at (-0.5,1.5)[circle, draw,fill=black]{};
	\node (4-2) at(0,1.5){};
	\node(4-3) at (0.5,1.5){};
	\node(5-1) at (-1,2)[circle, draw]{};
	\node(5-2) at(0,2)[circle, draw, fill=black] {};
	\node (6-1) at (-1.5, 2.5)[circle,draw, fill=black]{};
	\node(6-2) at (-0.5, 2.5)[circle, draw, fill=black]{};
	\node(6-3) at(0,2.5){};
	\node(6-4) at (0.5,2.5){};
	\node(7-1) at (-2,3){};
	\node(7-2) at (-1,3){};
	\node(7-3) at (-0.9,3){};
	\node(7-4) at(0,3){};
	\draw(6-2)--(7-3);
	\draw(6-2)--(7-4);
	\draw (1)--(2-1);
	\draw (1)--(2-2);
	\draw (2-2)--(3-1);
	\draw (2-2)--(3-2);
	\draw (2-2)--(3-3);
	\draw (2-2)--(3-4);
	\draw (3-1)--(4-1);
	\draw (3-1)--(4-2);
	\draw (3-1)--(4-3);
	\draw (4-1)--(5-1);
	\draw (4-1)--(5-2);
	\draw (5-1)--(6-1);
	\draw (6-1)--(7-1);
	\draw (6-1)--(7-2);
	\draw (5-2)--(6-2);
	\draw (5-2)--(6-3);
	\draw (5-2)--(6-4);
	\node[minimum size=0pt,inner sep=0pt,label=below:$(\mathcal{T}_2)$] (name) at (0,-0.3){};
\end{tikzpicture}
\hspace{4mm}\begin{tikzpicture}[scale=0.9]
	\tikzstyle{every node}=[thick,minimum size=4pt, inner sep=1pt]
	\node(1)at (0,0)[circle, draw, fill=black]{};
	\node(2-1) at (-0.5,0.5){};
	\node(2-2) at (0.5,0.5)[circle,draw, fill=black]{};
	\node(3-1) at (0,1)[circle, draw]{};
	\node(3-2) at (0.3,1){};
	\node(3-3) at (0.7,1){};
	\node(3-4) at (1,1){};
	\node(4-1) at (-0.5,1.5)[circle, draw,fill=black]{};
	\node (4-2) at(0,1.5){};
	\node(4-3) at (0.5,1.5){};
	\node(5-1) at (-1,2)[circle, draw]{};
	\node(5-2) at(0,2)[circle, draw, fill=black] {};
	\node (6-1) at (-1.5, 2.5)[circle,draw, fill=black,label=left:$\red \times$]{};
	\node(6-2) at (-0.5, 2.5)[circle,draw,fill=black]{};
	\node(6-3) at(0,2.5){};
	\node(6-4) at (0.5,2.5){};
	\node(7-1) at (-2,3){};
	\node(7-2) at (-1.5,3){};
	\node(7-3) at(-1,3){};
	\node(7-4) at(-0.9,3){};
	\node(7-5) at(0,3){};
	\draw(6-2)--(7-4);
	\draw(6-2)--(7-5);
	\draw (1)--(2-1);
	\draw (1)--(2-2);
	\draw (2-2)--(3-1);
	\draw (2-2)--(3-2);
	\draw (2-2)--(3-3);
	\draw (2-2)--(3-4);
	\draw (3-1)--(4-1);
	\draw (3-1)--(4-2);
	\draw (3-1)--(4-3);
	\draw (4-1)--(5-1);
	\draw (4-1)--(5-2);
	\draw (5-1)--(6-1);
	\draw (6-1)--(7-1);
	\draw (6-1)--(7-2);
	\draw (6-1)--(7-3);
	\draw (5-2)--(6-2);
	\draw (5-2)--(6-3);
	\draw (5-2)--(6-4);
	\node[minimum size=0pt,inner sep=0pt,label=below:$(\mathcal{T}_3)$] (name) at (0,-0.3){};

\draw[dashed,blue](-1.3,1.7) to [in=150, out=120] (-0.7,2.3) ;
\draw[dashed,blue](-1.3,1.7)--(-0.3,0.7);
\draw[dashed,blue] (0.3,1.3)to [in=-30, out=-60] (-0.3,0.7);
	
	\draw[dashed, blue](0.3,1.3)--(-0.7,2.3);
\end{tikzpicture}
\end{eqnarray*}

For the three trees displayed above, each has two typical divisors.
\begin{itemize}
	\item $\mathcal{T}_1$ is effective and the divisor in the blue  dashed circle is its effective divisor and $l$ is its effective leaf.
	\item $\mathcal{T}_2$ is not effective, since the first leaf is incident  to a vertex of degree 1, say the root of $\mathcal{T}_2$, which violates Condition (ii) in Definition \ref{Def: effective tree monomials}.
	\item $\mathcal{T}_3$ is not effective since there is a vertex of degree 1 on the path from the root of the typical divisor in the blue  dashed circle to the leftmost leaf above it, which violates Condition (i) in Definition \ref{Def: effective tree monomials}.
\end{itemize}
\end{exam}

Now we are going to construct a homotopy map $\mathfrak{H}: \RBinfty\rightarrow \RBinfty$,  i.e., a graded map of degree $1$,  satisfying  $\partial \mathfrak{H}+\mathfrak{H}\partial =\mathrm{Id}$ in positive degrees.

\begin{defn}
Let $\mathcal{T}$ be an effective tree monomial in $\RBinfty$ and $\mathcal{T}'$ be its effective divisor. Assume that $\mathcal{T}'=\widehat{\mathcal{S}}$, where $\mathcal{S}$ is a generator of positive degree. Then define $$\overline{\mathfrak{H}}(\mathcal{T})=(-1)^\omega \frac{1}{l_\mathcal{S}}m_{\mathcal{T}', \mathcal{S}}(\mathcal{T}),$$ where $m_{\mathcal{T}',\mathcal{S}}(\mathcal{T})$ is the tree monomial obtained from $\mathcal{T}$ by replacing the effective divisor $\mathcal{T}'$ by $\mathcal{S}$, $\omega$ is the sum of degrees of all the vertices on the path from root of $\mathcal{T}'$ to the root of $\mathcal{T}$  (except the root vertex of $\mathcal{T}'$) and on the left of this path .
\end{defn}

Then we define a map $\mathfrak{H}$ of degree 1 on $\RBinfty$ as
\begin{itemize}
	\item[(i)] If $\mathcal{T}$ is not an effective tree monomial, then define $\mathfrak{H}(\mathcal{T})=0$;
	\item[(ii)] If $\mathcal{T}$ is effective, denote by $\overline{\mathcal{T}}$ is obtained from $\mathcal{T}$ by replacing $\mathcal{T}'$ by $\mathcal{T}'-\frac{1}{l_\mathcal{S}}\partial \mathcal{S}$ with $\mathcal{T}'$ being the leading term of $\partial \mathcal{S}$.  Define $\mathfrak{H}(\mathcal{T})=\overline{\mathfrak{H}}(\mathcal{T})+\mathfrak{H}(\overline{\mathcal{T}})$, where, since  each tree monomial in  $\overline{\mathcal{T}}$ is strictly smaller than $\mathcal{T}$, define $\mathfrak{H}(\overline{\mathcal{T}})$ by taking induction on leading terms (this can be done by Lemma~\ref{Lem: homotopy well defined}).
	\end{itemize}
Let's explain more on the definition of $\mathfrak{H}$. Denote $\mathcal{T}$ by $\mathcal{T}_1$. By definition above, $\mathfrak{H}(\mathcal{T})=\overline{\mathfrak{H}}(\mathcal{T}_1)+\mathfrak{H}(\overline{\mathcal{T}_1})$. Since $\mathfrak{H}$ vanishes on non-effective tree monomials, we have $\mathfrak{H}(\overline{\mathcal{T}}_1)=\mathfrak{H}(\sum_{i_2\in I_2} \mathcal{T}_{i_2})$ where $\{\mathcal{T}_{i_2}\}_{i_2\in I_2}$ is the set of effective tree monomials together with their coefficients appearing in the expansion of $\overline{\mathcal{T}_1}$. Then by definition of $\mathfrak{H}$, $\mathfrak{H}(\sum_{i_2\in I_2} \mathcal{T}_{i_2})=\overline{\mathfrak{H}}(\sum_{i_2\in I_2} \mathcal{T}_{i_2})+\mathfrak{H}(\sum_{i_2\in I_2}\overline{\mathcal{T}_{i_2}})$, we have
$$\mathfrak{H}(\mathcal{T})=\overline{\mathfrak{H}}(\mathcal{T}_1)+\overline{\mathfrak{H}}(\sum_{i_2\in I_2} \mathcal{T}_{i_2})+\mathfrak{H}(\sum_{i_2\in I_2}\overline{\mathcal{T}_{i_2}}).$$   Taking induction on leading terms,  $\mathfrak{H}(\mathcal{T})$ is the following series:
\begin{eqnarray}
	\label{Eq:Definition-of-homotopy}
	\mathfrak{H}(\mathcal{T})=\overline{\mathfrak{H}}(\mathcal{T}_1)+\overline{\mathfrak{H}}(\sum_{i_2\in I_2} \mathcal{T}_{i_2})+\overline{\mathfrak{H}}(\sum_{i_3\in I_3} \mathcal{T}_{i_3})+\dots+\overline{\mathfrak{H}}(\sum_{i_n\in I_n} \mathcal{T}_{i_n})+\dots,
	\end{eqnarray}
 where $\{\mathcal{T}_{i_n}\}_{i_n\in I_n}$ is the set of  the effective tree monomials with their nonzero coefficients  appearing in the expansion of $\sum_{i_{n-1}\in I_{n-1}} \overline{\mathcal{T}_{i_{n-1}}}$.
	
	\begin{lem}\label{Lem: homotopy well defined}  For any effective tree monomial $\mathcal{T}$, the expansion of $\mathfrak{H}(\mathcal{T})$ in Equation \eqref{Eq:Definition-of-homotopy} is always a finite sum, i.e, there exists some large integer $n$ such that all tree monomials in $\sum_{i_n\in I_n}\overline{\mathcal{T}_{i_n}}$ are not effective.
		\end{lem}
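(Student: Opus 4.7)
The plan is to prove the lemma by well-founded induction on the graded path-lexicographic order. The argument rests on two ingredients: (a) every tree monomial produced at any stage of the iteration lies in the same homogeneous component of $\RBinfty$ as $\mathcal{T}$, and (b) the graded path-lexicographic order, restricted to a fixed homogeneous component, is a well-order. For (a), I will note that for every generator $\mathcal{S}$ the differential $\partial\mathcal{S}$ is a $\bfk$-linear combination of tree monomials all of the same arity as $\mathcal{S}$ and of common degree $|\mathcal{S}|-1=|\widehat{\mathcal{S}}|$; hence substituting $\widehat{\mathcal{S}}$ by any term of $\partial\mathcal{S}-l_{\mathcal{S}}\widehat{\mathcal{S}}$ inside $\mathcal{T}$ preserves both total arity and total degree. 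A short induction on $n$ in \eqref{Eq:Definition-of-homotopy} then shows that each $\mathcal{T}_{i_n}$ lies in the same homogeneous piece as $\mathcal{T}$.

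For (b), the key observation is that any planar reduced tree of arity $n$ has only vertices of arity at most $n$, so only the finite alphabet $\{T_1,m_2,T_2,m_3,\ldots,T_n,m_n\}$ can decorate its vertices. The length-lexicographic order on words over a finite alphabet is a well-order, and the lexicographic order on length-$n$ sequences of such words --- which coincides with the path-lexicographic order on arity-$n$ tree monomials --- inherits that property. Crucially, this holds even though the homogeneous component itself is infinite (arbitrary stacking of degree-zero $T_1$ vertices is possible); what saves the argument is the finiteness of the label alphabet within a fixed arity class, not of the set of monomials itself.

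With both ingredients in place, the termination argument becomes routine. Using the strict-smallness claim already recorded in the construction of $\mathfrak{H}$, namely that every tree monomial appearing in $\overline{\mathcal{T}_{i_n}}$ is strictly smaller than $\mathcal{T}_{i_n}$ (this follows because $\widehat{\mathcal{S}}$ is by construction the leading term of $\partial\mathcal{S}$, while the effectiveness conditions (i) and (ii) force the leftmost distinguishing path of the full tree to be precisely the one passing through the effective divisor), the recursion tree whose nodes are the $\mathcal{T}_{i_n}$ is finitely branching --- each $\partial\mathcal{S}$ has only finitely many terms --- and has no infinite strictly descending branch by the well-foundedness established in (b). König's lemma then forces this tree to be finite, which means $I_n=\emptyset$ for some $n$ and the sum in \eqref{Eq:Definition-of-homotopy} stabilises after finitely many steps.

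The most delicate point I anticipate is the strict-decrease claim used implicitly in the last paragraph: verifying that a local decrease inside the replaced divisor really does descend to a global decrease in the path-lexicographic order on the whole tree. This is where the precise formulation of effectiveness --- the leftmost-uppermost position of the typical divisor, together with the absence of positive-degree vertices and other typical divisors on earlier paths --- does the essential work, by guaranteeing that the first path on which the old and new trees disagree is exactly the one traversing the divisor, so that smallness at the divisor level transfers directly to smallness at the tree level.
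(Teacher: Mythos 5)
Your proposal is correct and follows essentially the same route as the paper: the paper's proof simply observes that $\max\{\mathcal{T}_{i_k}\mid i_k\in I_k\}$ strictly decreases with $k$ and that the graded path-lexicographic order is a well-order, which is exactly your strict-decrease-plus-well-foundedness argument repackaged via K\H{o}nig's lemma on the finitely branching recursion tree. The extra details you supply --- that all monomials stay in one arity/degree component, that the label alphabet within a fixed arity is finite so the order is a well-order there, and that effectiveness makes the local decrease at the divisor a global decrease --- are points the paper asserts without proof, and your justifications of them are sound.
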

	\begin{proof}	 It is easy to see that $\max\{\mathcal{T}_{i_k}|i_k\in I_k\}>\max\{\mathcal{T}_{i_{k+1}}|i_{k+1}\in I_{k+1}\}$ for all   $k\geq 1$ (by convention, $i_1\in I_1=\{1\}$), so {  the right-hand side  of} Equation~\eqref{Eq:Definition-of-homotopy} cannot be an infinite sum, as $``>"$ is a well order.

\end{proof}

\begin{lem}{\label{Lem: Induction}}Let $\mathcal{T}$ be an effective tree monomial. Then $\partial \overline{\mathfrak{H}}(\mathcal{T})+\mathfrak{H}\partial(\mathcal{T}-\overline{\mathcal{T}})=\mathcal{T}-\overline{\mathcal{T}}$.
	\end{lem}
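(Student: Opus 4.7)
The plan is to compute $\partial \overline{\mathfrak H}(\mathcal T)$ directly via the Leibniz rule and compare it with $\mathfrak H\partial(\mathcal T-\overline{\mathcal T})$. Write $\mathcal T = O \circ_i \mathcal T'$, where $O$ is the outer context (a tree with a hole at position $i$) and $\mathcal T' = \widehat{\mathcal S}$ is the effective divisor. Then $\overline{\mathfrak H}(\mathcal T) = (-1)^\omega l_{\mathcal S}^{-1}\, O \circ_i \mathcal S$. Using the derivation property of $\partial$ on the free operad, together with the planar-order Koszul sign convention (so that the sign from moving $\partial$ across the vertices of $O$ preceding the hole is precisely $(-1)^\omega$, which matches the definition of $\omega$ since the only such vertices contributing to the parity are the ones recorded in $\omega$), one obtains
\[
\partial \overline{\mathfrak H}(\mathcal T) = (-1)^\omega l_{\mathcal S}^{-1} (\partial O)\circ_i \mathcal S + l_{\mathcal S}^{-1}\, O \circ_i \partial \mathcal S.
\]
The second summand is $\mathcal T - \overline{\mathcal T}$ by construction of $\overline{\mathcal T}$. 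Hence the lemma reduces to establishing $\mathfrak H\partial(\mathcal T - \overline{\mathcal T}) = -(-1)^\omega l_{\mathcal S}^{-1} (\partial O)\circ_i \mathcal S$.

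For the left-hand side, I will compute $\partial(\mathcal T - \overline{\mathcal T}) = l_{\mathcal S}^{-1} \partial(O \circ_i \partial \mathcal S)$. Applying Leibniz again, the "inner" contribution is $(-1)^\omega l_{\mathcal S}^{-1}\, O \circ_i \partial^2 \mathcal S$, which vanishes because $\partial^2 \mathcal S = 0$ in the dg operad $\RBinfty$. Therefore only the "outer" contribution survives, giving $\partial(\mathcal T - \overline{\mathcal T}) = l_{\mathcal S}^{-1} (\partial O)\circ_i \partial \mathcal S$. Consequently, the lemma is now equivalent to the single identity
\[
\mathfrak H\bigl((\partial O)\circ_i \partial \mathcal S\bigr) = -(-1)^\omega (\partial O)\circ_i \mathcal S.
\]

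To prove this identity I will expand $\partial O = \sum_k d_k O_k$, where each summand corresponds to replacing a single outer vertex $v_k$ of positive degree by a tree in $\partial v_k$, and $\partial \mathcal S = l_{\mathcal S} \mathcal T' + R$ with $R$ a combination of tree monomials strictly lower than $\mathcal T'$ in the length-lexicographic order. By Conditions (i) and (ii) of the definition of effectiveness of $\mathcal T$, the vertex $v_k$ must lie either strictly to the right of the spine from the root of $\mathcal T$ to the root of $\mathcal T'$, or on the spine itself with the spine child being its leftmost input; in either configuration the leading tree $O_k\circ_i \mathcal T'$ preserves $\mathcal T'$ as its effective divisor and preserves the effective leaf $l$. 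By the recursive formula \eqref{Eq:Definition-of-homotopy}, $\mathfrak H(O_k\circ_i \partial \mathcal S)$ begins with $\overline{\mathfrak H}(l_{\mathcal S}\, O_k\circ_i \mathcal T') = (-1)^{\omega_k}\, O_k \circ_i \mathcal S$ for the new Koszul exponent $\omega_k$, and the cascade of lower-order corrections produced by $O_k\circ_i R$ is absorbed by the further iterations of $\mathfrak H$ applied in \eqref{Eq:Definition-of-homotopy}, cancelling telescopically by induction on the length-lexicographic order.

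The main obstacle is the sign bookkeeping: one has to verify that $d_k \cdot (-1)^{\omega_k} = -(-1)^\omega$ uniformly in $k$, and that the induction on the order closes cleanly. This rests on the observation that replacing $v_k$ by a term of $\partial v_k$ shifts the planar-order count of vertices preceding the hole by exactly $|v_k|$ modulo $2$, which combines with the Leibniz sign $d_k$ attached to differentiating $v_k$ (itself dictated by the degrees of vertices planar-before $v_k$) to restore the overall sign $-(-1)^\omega$. A case split according to whether $v_k$ is on the spine (type~(a)) or strictly to the right of it (type~(c)) reduces the verification to a finite check using the explicit sign conventions in \eqref{Eq: defining HRB 1}--\eqref{Eq: defining HRB 2}. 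Once the signs are matched, the lemma follows.
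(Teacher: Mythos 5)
Your proposal follows essentially the same route as the paper's proof: split $\mathcal{T}$ into the effective divisor $\widehat{\mathcal{S}}$ and its outer context, apply the Leibniz rule together with $\partial^2\mathcal{S}=0$, observe that differentiating an outer vertex preserves effectiveness with the same effective divisor (so that $\mathfrak{H}$ collapses to $\overline{\mathfrak{H}}$ on those terms by its recursive definition), and match signs so that the outer-differential contributions cancel, leaving $\mathcal{T}-\overline{\mathcal{T}}$. The only difference is presentational: the paper writes the context as an explicit iterated composition $(\cdots(X_1\circ_{i_1}\cdots\circ_{i_{p-1}}X_p)\circ_{i_p}\widehat{\mathcal{S}})\circ_{j_1}Y_1\cdots\circ_{j_q}Y_q$ and verifies the sign cancellation term by term, which is precisely the bookkeeping you defer.
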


\begin{proof}
	We can write $\mathcal{T}$ as a compositions  in the following way:
	$$(\cdots (((((\cdots(X_1\circ_{i_1}X_2)\circ\cdots )\circ_{i_{p-1}}X_p)\circ_{i_p} \widehat{\mathcal{S}})\circ_{j_1}Y_1)\circ_{j_2}Y_2)\cdots)\circ_{j_q}Y_q,$$
	 where $\widehat{\mathcal{S}}$ is the effective divisor of $\mathcal{T}$ and $X_1,\dots, X_p$ are generators of $\RBinfty$ corresponding to the vertices which live on the path from root of $\mathcal{T}$ and root of $\widehat{\mathcal{S}}$ (except the root of $\widehat{\mathcal{S}}$) and on the left of this path in the underlying  tree of  $\mathcal{T}$.
	
	 By definition,
{\small
	 \begin{align*}
	 	\partial \overline{\mathfrak{H}}(\mathcal{T})=&\
\frac{1}{l_\mathcal{S}}(-1)^{\sum\limits_{j=1}^p|X_j|}\partial
\Big((\cdots ((((\cdots ((X_1\circ_{i_1}X_2)\circ_{i_2}\cdots )\circ_{i_{p-1}}X_p)\circ_{i_p} \mathcal{S})\circ_{j_1}Y_1)\circ_{j_2} \cdots)\circ_{j_q}Y_q\Big)\\
=&\ \frac{1}{l_\mathcal{S}}\Big(\sum_{k=1}^p(-1)^{\sum\limits_{j=1}^p|X_j|+\sum\limits_{j=1}^{k-1}|X_j|}\\
	 	&(\cdots ((((\cdots ((\cdots(X_1\circ_{i_1} X_2)\circ_{i_2} \cdots ) \circ_{i_{k-1}}\partial X_k )\circ_{i_k} \cdots)\circ_{i_{p-1}}X_p)\circ_{i_p} \mathcal{S})\circ_{j_1}Y_1)\circ_{j_2} \cdots)\circ_{j_q}Y_q\Big)\\
	 	&+\frac{1}{l_\mathcal{S}}\Big((\cdots ((( (\cdots\cdots(X_1\circ_{i_1}X_2)\circ_{i_2}\cdots )\circ_{i_{p-1}}X_p)\circ_{i_p} \partial \mathcal{S})\circ_{j_1}Y_1)\circ_{j_2} \cdots)\circ_{j_q}Y_q\Big)\\
	 	&+\frac{1}{l_\mathcal{S}}\Big(\sum_{k=1}^q(-1)^{|\mathcal{S}|+\sum\limits_{j=1}^{k-1}|Y_j|}\\
	 	&(\cdots ((\cdots((((\cdots(X_1\circ_{i_1}X_2)\circ\cdots )\circ_{i_{p-1}}X_p)\circ_{i_p} \mathcal{S})\circ_{j_1}Y_1)\circ_{j_2} \cdots ) \circ_{j_k}\partial Y_{k})\circ_{j_{k+1}}\cdots )\circ_{j_q}Y_q\Big)
	 	\end{align*}
	 }
	
 	 and
 	{\small
 	\begin{align*}
 	 	&\mathfrak{H}\partial (\mathcal{T}-\overline{\mathcal{T}})\\
 	 	=\ \ &\frac{1}{l_\mathcal{S}}\mathfrak{H}\partial \Big((\cdots((((\cdots(X_1\circ_{i_1}X_2)\circ_{i_2}\cdots )\circ_{i_{p-1}}X_p)\circ_{i_p} \partial \mathcal{S})\circ_{j_1}Y_1)\circ_{j_2} \cdots)\circ_{j_q}Y_q\Big)\\
 	 	=\ \ &\frac{1}{l_\mathcal{S}}\mathfrak{H}\Big(\sum\limits_{k=1}^p(-1)^{\sum\limits_{j=1}^{k-1}|X_j|}\!\!
 (\cdots((((\cdots  ((\cdots(X_1\circ_{i_1} \cdots )\circ_{i_{k-1}}\partial X_k)\circ_{i_k}\cdots)\circ_{i_{p-1}}X_p)\circ_{i_p} \partial \mathcal{S})\circ_{j_1}Y_1)\circ_{j_2} \cdots)\circ_{j_q}Y_q\Big)\\
 	 	&+\frac{1}{l_\mathcal{S}}\mathfrak{H}\Big(\sum_{k=1}^q(-1)^{\sum\limits_{j=1}^p|X_j|+|\mathcal{S}|-1+\sum_{j=1}^{k-1}|Y_j|}\\
 	 	&(\cdots((\cdots ((((\cdots(X_1\circ_{i_1}X_2)\circ_{i_2}\cdots )\circ_{i_{p-1}}X_p)\circ_{i_p} \partial \mathcal{S})\circ_{j_1}Y_1)\circ_{j_2} \cdots)\circ_{j_k}\partial Y_k)\circ_{j_{k+1}}\cdots\circ_{j_q}Y_q\Big).
 	 	\end{align*}
	}
By the definition of the effective divisor in an effective tree monomial, it can be easily seen that each tree monomial in the expansion of
 \begin{align*} (\cdots((((\cdots  ((\cdots(X_1\circ_{i_1} \cdots )\circ_{i_{k-1}}\partial X_k)\circ_{i_k}\cdots)\circ_{i_{p-1}}X_p)\circ_{i_p} \widehat{\mathcal{S}})\circ_{j_1}Y_1)\circ_{j_2} \cdots)\circ_{j_q}Y_q\end{align*}
 and of
 	\begin{align*}(\cdots((\cdots ((((\cdots(X_1\circ_{i_1}X_2)\circ_{i_2}\cdots )\circ_{i_{p-1}}X_p)\circ_{i_p}  \widehat{\mathcal{S}})\circ_{j_1}Y_1)\circ_{j_2} \cdots)\circ_{j_k}\partial Y_k)\circ_{j_{k+1}}\cdots\circ_{j_q}Y_q
 \end{align*}
 is still effective tree monomial whose  effective divisor is still $\widehat{\mathcal{S}}$.
 Thus we have
 {\small
 \begin{eqnarray*}
 	&&\mathfrak{H}\partial(\mathcal{T}-\overline{\mathcal{T}})\\
 	 	&=&\frac{1}{l_S}\Big(\sum\limits_{k=1}^p(-1)^{\sum\limits_{j=1}^{k-1}|X_j|+\sum\limits_{j=1}^p|X_j|-1}\\
 &&(\cdots((((\cdots  ((\cdots(X_1\circ_{i_1} \cdots )\circ_{i_{k-1}}\partial X_k)\circ_{i_k}\cdots)\circ_{i_{p-1}}X_p)\circ_{i_p} \mathcal{S})\circ_{j_1}Y_1)\circ_{j_2} \cdots)\circ_{j_q}Y_q\Big)\\
 	 	&&+ \frac{1}{l_S}\Big(\sum_{k=1}^q(-1)^{|S|-1+\sum\limits_{j=1}^{k-1}|Y_j|}\\
 	 	&&(\cdots((\cdots ((((\cdots(X_1\circ_{i_1}X_2)\circ_{i_2}\cdots )\circ_{i_{p-1}}X_p)\circ_{i_p}   \mathcal{S})\circ_{j_1}Y_1)\circ_{j_2} \cdots)\circ_{j_k}\partial Y_k)\circ_{j_{k+1}}\cdots\circ_{j_q}Y_q\Big).
 	\end{eqnarray*}}
Take sum of the above expansion, then we get $\partial \overline{\mathfrak{H}}(\mathcal{T})+\mathfrak{H}\partial(\mathcal{T}-\overline{\mathcal{T}})=\mathcal{T}-\overline{\mathcal{T}}$.
	\end{proof}

\begin{prop}The degree $1$ map $\mathfrak{H}$ defined above satisfies $\partial \mathfrak{H}+\mathfrak{H}\partial=\mathrm{Id}$ in all  positive degrees of $\RBinfty$.
	\end{prop}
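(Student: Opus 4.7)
The plan is to prove by strong induction on the well-order on tree monomials that $(\partial\mathfrak{H} + \mathfrak{H}\partial)(\mathcal{T}) = \mathcal{T}$ for every positive-degree tree monomial $\mathcal{T}$; by linearity this yields the identity on the full positive-degree part of $\RBinfty$. The well-foundedness of the order, already noted, ensures that the induction is legitimate, and smallest positive-degree cases (e.g.\ $T_2$, $m_3$) serve as the base.

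If $\mathcal{T}$ is effective, the very definition of $\mathfrak{H}$ gives $\mathfrak{H}(\mathcal{T}) = \overline{\mathfrak{H}}(\mathcal{T}) + \mathfrak{H}(\overline{\mathcal{T}})$. Applying $\partial$ and invoking the preceding Lemma to replace $\partial\overline{\mathfrak{H}}(\mathcal{T})$ by $(\mathcal{T} - \overline{\mathcal{T}}) - \mathfrak{H}\partial(\mathcal{T} - \overline{\mathcal{T}})$, a direct manipulation produces
\[
(\partial\mathfrak{H} + \mathfrak{H}\partial)(\mathcal{T}) = (\mathcal{T} - \overline{\mathcal{T}}) + (\partial\mathfrak{H} + \mathfrak{H}\partial)(\overline{\mathcal{T}}).
\]
Since $\overline{\mathcal{T}}$ is by construction a linear combination of tree monomials strictly smaller than $\mathcal{T}$ in the well-order, the induction hypothesis identifies $(\partial\mathfrak{H} + \mathfrak{H}\partial)(\overline{\mathcal{T}})$ with $\overline{\mathcal{T}}$, and the effective case is complete.

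The real work is the non-effective case: here $\mathfrak{H}(\mathcal{T}) = 0$, and the identity reduces to $\mathfrak{H}\partial(\mathcal{T}) = \mathcal{T}$. The strategy is to single out within $\mathcal{T}$ a distinguished positive-degree generator $\mathcal{S}^{\star}$ whose replacement by its leading image $\widehat{\mathcal{S}^{\star}}$ turns $\mathcal{T}$ into an effective tree monomial $\mathcal{T}^{\star}$ having $\widehat{\mathcal{S}^{\star}}$ as effective divisor; $\overline{\mathfrak{H}}(\mathcal{T}^{\star})$ is then a scalar multiple of $\mathcal{T}$, and after accounting for the leading coefficient $l_{\mathcal{S}^{\star}}$ from $\partial\mathcal{S}^{\star}$ together with the Koszul sign coming from the position of $\mathcal{S}^{\star}$ in $\mathcal{T}$, this contribution returns $\mathcal{T}$ exactly. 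The remaining summands of $\partial(\mathcal{T})$ fall into two classes: the subleading terms of $\partial\mathcal{S}^{\star}$ at the locus of $\mathcal{S}^{\star}$, and the terms arising by differentiating some other positive-degree vertex; in each class the resulting tree monomial is either non-effective (so $\mathfrak{H}$ kills it) or effective with a different effective divisor, whose contribution is controlled by the induction hypothesis applied to strictly smaller monomials.

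The principal obstacle is this non-effective case: identifying the vertex $\mathcal{S}^{\star}$ uniformly, controlling the sign $(-1)^{\omega}$ that enters $\overline{\mathfrak{H}}$ against the Koszul signs produced by $\partial$, and verifying that all the spurious effective contributions cancel. The Gröbner-basis-style reduction philosophy borrowed from Dotsenko--Khoroshkin provides the guiding heuristic for this bookkeeping, and it is precisely this cancellation check that makes the non-effective case the delicate part of the argument.
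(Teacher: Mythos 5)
Your overall architecture matches the paper's: induction on the well-order on tree monomials, the effective case dispatched by combining the recursive definition $\mathfrak{H}(\mathcal{T})=\overline{\mathfrak{H}}(\mathcal{T})+\mathfrak{H}(\overline{\mathcal{T}})$ with the preceding lemma and the induction hypothesis applied to $\overline{\mathcal{T}}$, and the non-effective case reduced to $\mathfrak{H}\partial(\mathcal{T})=\mathcal{T}$ via the left-upper-most positive-degree vertex $\mathcal{S}$. The effective case as you state it is correct and is exactly the paper's argument.

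The gap is in how you dispose of the remaining summands of $\partial(\mathcal{T})$ in the non-effective case. You assert that each such summand is ``either non-effective (so $\mathfrak{H}$ kills it) or effective with a different effective divisor, whose contribution is controlled by the induction hypothesis applied to strictly smaller monomials.'' The induction hypothesis only asserts $(\partial\mathfrak{H}+\mathfrak{H}\partial)(\mathcal{T}')=\mathcal{T}'$ for smaller $\mathcal{T}'$; it yields no formula for $\mathfrak{H}(\mathcal{T}')$ by itself, so it cannot be used to show that the $\mathfrak{H}$-images of effective summands cancel. The two classes in fact require different, and more specific, arguments. For the terms obtained by applying $\partial$ to a vertex other than $\mathcal{S}$, one must show that \emph{every} resulting tree monomial is non-effective --- this is where the non-effectiveness of $\mathcal{T}$ and the precise ``left-upper-most'' choice of $\mathcal{S}$ are used, and there is no surviving ``effective with a different divisor'' subcase. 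For the subleading terms of $\partial\mathcal{S}$, which genuinely can be effective, the cancellation comes not from induction but from the recursion inside the definition of $\mathfrak{H}$: writing $\mathcal{T}^{\star}$ for the tree with $\widehat{\mathcal{S}}$ substituted at the locus of $\mathcal{S}$, one has $\mathfrak{H}(\mathcal{T}^{\star})=\overline{\mathfrak{H}}(\mathcal{T}^{\star})+\mathfrak{H}(\overline{\mathcal{T}^{\star}})$, and $l_{\mathcal{S}}\,\overline{\mathcal{T}^{\star}}$ is by construction precisely the negative of the subleading part of the $\partial\mathcal{S}$ contribution, so these two $\mathfrak{H}$-terms cancel identically, leaving $l_{\mathcal{S}}\,\overline{\mathfrak{H}}(\mathcal{T}^{\star})=\mathcal{T}$. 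Without these two points the ``cancellation check'' you defer is not merely delicate bookkeeping: the mechanism you name for it would not carry it out.
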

\begin{proof}
Let $\mathcal{T}$  be an effective tree monomial.  Since the leading term of  $\overline{\mathcal{T}}$ is strictly smaller than $\mathcal{T}$, by induction, we have $$\mathfrak{H}\partial(\overline{\mathcal{T}})+\partial \mathfrak{H}(\overline{\mathcal{T}})=\overline{\mathcal{T}}.$$
	By the definition of $\mathfrak{H}$, $\mathfrak{H}(\mathcal{T})=\overline{\mathfrak{H}}(\mathcal{T})+\mathfrak{H}(\overline{\mathcal{T}})$ and we have $\partial \mathfrak{H}(\mathcal{T})=\partial \overline{\mathfrak{H}}(\mathcal{T})+\partial \mathfrak{H}(\overline{\mathcal{T}})$. Thus, $$\begin{array}{rcl} \partial \mathfrak{H}(\mathcal{T})+\mathfrak{H}\partial (\mathcal{T})&=&\partial \overline{\mathfrak{H}}(\mathcal{T})+\partial \mathfrak{H}(\overline{\mathcal{T}})+\mathfrak{H}\partial(\mathcal{T}-\overline{\mathcal{T}})+\mathfrak{H}\partial(\overline{\mathcal{T}})\\
  &=&\partial \overline{\mathfrak{H}}(\mathcal{T})+\mathfrak{H}\partial(\mathcal{T}-\overline{\mathcal{T}})+\partial \mathfrak{H}(\overline{\mathcal{T}})+\mathfrak{H}\partial(\overline{\mathcal{T}})\\
  &=&\mathcal{T}-\overline{\mathcal{T}}+\overline{\mathcal{T}}\\
  &=&\mathcal{T},\end{array}$$
  where in the third equality we have used the induction hypothesis and $$\partial \overline{\mathfrak{H}}(\mathcal{T})+\mathfrak{H}\partial(\mathcal{T}-\overline{\mathcal{T}})=\mathcal{T}-\overline{\mathcal{T}}$$ by Lemma \ref{Lem: Induction}.

  \medskip

	Next let's prove that for a non-effective tree monomial $\mathcal{T}$, the equation $\partial \mathfrak{H}(\mathcal{T})+\mathfrak{H}\partial(\mathcal{T})=\mathcal{T}$ holds.

By the definition of $\mathfrak{H}$, since $\mathcal{T}$ is not effective, $\mathfrak{H}(\mathcal{T})=0$, thus we just need to check that $\mathfrak{H}\partial(\mathcal{T})=\mathcal{T}$. Since $\mathcal{T}$ has   positive degree, there must exist at least one vertex of positive degree. Let's pick a special vertex $\mathcal{S}$ satisfying  the following conditions:
	\begin{itemize}
		\item[(i)] on the path from $\mathcal{S}$ to the leftmost leaf $l$ of $\mathcal{T}$ above $\mathcal{S}$, there are no other vertices of positive degree;
		\item[(ii)] for any leaf $l'$ of $\mathcal{T}$ located on the left of $l$, the vertices  on the path from the root of $\mathcal{T}$ to $l'$ are all of degree 0.
	\end{itemize}
It is easy to see such a vertex always exists in $\mathcal{T}$. Morally, this vertex is the ``left-upper-most'' vertex of positive degree.
	Then the tree monomial  $\mathcal{T}$ can be written as
	$$(\cdots((((\cdots(X_1\circ_{i_1}X_2)\circ_{i_2}\cdots )\circ_{i_{p-1}}X_p)\circ_{i_p} \mathcal{S})\circ_{j_1}Y_1)\circ_{j_2} \cdots)\circ_{j_q}Y_q,$$
	where $X_1,\dots,X_p$ correspond to the vertices located on the path from the root of $\mathcal{T}$ to $\mathcal{S}$ and on the left of this path in the plane.
	
	By definition,
	{\small	\begin{align*}
		&\mathfrak{H}\partial \mathcal{T}\\
		=&\mathfrak{H}\ {\Huge\{}\\
		&\sum_{k=1}^p(-1)^{\sum\limits_{t=1}^{k-1}|X_t|}
(\cdots(( ((\cdots((\cdots(X_1\circ_{i_1} \cdots)\circ_{i_k}\partial X_k)\circ_{i_{k+1}}\cdots )\circ_{i_{p-1}}X_p)\circ_{i_p} \mathcal{S})\circ_{j_1}Y_1)\circ_{j_2} \cdots)\circ_{j_q}Y_q\\
		&+(-1)^{\sum_{t=1}^p|X_t|}(\cdots((((\cdots(X_1\circ_{i_1}X_2)\circ\cdots )\circ_{i_{p-1}}X_p)\circ_{i_p} \partial \mathcal{S})\circ_{j_1}Y_1)\circ_{j_2} \cdots)\circ_{j_q}Y_q\\
		&+\sum_{k=1}^q(-1)^{\sum\limits_{t=1}^p|X_t|+|\mathcal{S}|+\sum\limits_{t=1}^{k-1}|Y_t|} (\cdots(\cdots((((\cdots(X_1\circ_{i_1}X_2)\circ_{i_2}\cdots )\circ_{i_{p-1}}X_p)\circ_{i_p} \mathcal{S})\circ_{j_1}Y_1)\circ_{j_2} \cdots)\circ_{j_k}\partial Y_{k})\cdots\circ_{j_q}Y_q\\
		&{\Huge \}}
	\end{align*}
}
	
	By the assumption, the divisor consisting of the path from $\mathcal{S}$ to $l$ must be one of the following forms
 \begin{eqnarray*}
		\begin{tikzpicture}[scale=0.5,descr/.style={fill=white}]
			\tikzstyle{every node}=[thick,minimum size=4pt, inner sep=1pt]
			\node(r) at(0,-0.5)[minimum size=0pt, label=below:$(A)$]{};
			\node(0) at(0,0)[circle, fill=black, label=right:$m_n(n\geqslant 3)$]{};
			\node(1-1) at(-2,2)[circle, draw]{};
			\node(1-2) at(0,2){};
			\node(1-3) at (2,2){};
			\node(2-1) at(-3,3)[circle,draw]{};
			\node(3-1) at(-4,4){};
			\draw(0)--(1-1);
			\draw(0)--(1-2);
			\draw(0)--(1-3);
			\draw[dotted, line width=0.5pt](-0.8,1)--(0.8,1);
			\draw[dotted, line width=0.5pt](1-1)--(2-1);
			\draw(2-1)--(3-1);
			\path[-,font=\scriptsize]
			(-1.8,1.2) edge [bend left=80] node[descr]{{\tiny$\sharp\geqslant 0$}} (-3.8,3.2);
			
		\end{tikzpicture}
		\hspace{4mm}
		\begin{tikzpicture}[scale=0.5,descr/.style={fill=white}]
			\tikzstyle{every node}=[thick,minimum size=4pt, inner sep=1pt]
			\node(r) at(0,-0.5)[minimum size=0pt, label=below:$(B)$]{};
			\node(0) at(0,0)[circle, fill=black, label=right:$m_n(n\geqslant 3)$]{};
			\node(1-1) at(-2,2)[circle, draw]{};
			\node(1-2) at(0,2){};
			\node(1-3) at (2,2){};
			\node(2-1) at(-3,3)[circle,draw]{};
			\node(3-1) at(-4,4)[circle, draw, fill=black]{};
			\node(4-1) at (-5,5){};
			\node(4-2) at(-3,5){};
			\draw(0)--(1-1);
			\draw(0)--(1-2);
			\draw(0)--(1-3);
			\draw[dotted, line width=0.5pt](-0.8,1)--(0.8,1);
			\draw[dotted, line width=0.5pt](1-1)--(2-1);
			\draw(2-1)--(3-1);
			\draw(3-1)--(4-1);
			\draw(3-1)--(4-2);
			\path[-,font=\scriptsize]
			(-1.8,1.2) edge [bend left=80] node[descr]{{\tiny$\sharp\geqslant 1$}} (-3.8,3.2);
		\end{tikzpicture}\\
		\begin{tikzpicture}[scale=0.5,descr/.style={fill=white}]
			\tikzstyle{every node}=[thick,minimum size=4pt, inner sep=1pt]
			\node(r) at(0,-0.5)[minimum size=0pt, label=below:$(C)$]{};
			\node(0) at(0,0)[circle, draw, label=right:$\ T_n(n\geqslant 2)$]{};
			\node(1-1) at(-2,2)[circle, draw]{};
			\node(1-2) at(0,2){};
			\node(1-3) at (2,2){};
			\node(2-1) at(-3,3)[circle,draw]{};
			\node(3-1) at(-4,4){};
			\draw(0)--(1-1);
			\draw(0)--(1-2);
			\draw(0)--(1-3);
			\draw[dotted, line width=0.5pt](-0.8,1)--(0.8,1);
			\draw[dotted, line width=0.5pt](1-1)--(2-1);
			\draw(2-1)--(3-1);
			\path[-,font=\scriptsize]
			(-1.8,1.2) edge [bend left=80] node[descr]{{\tiny$\sharp\geqslant 0$}} (-3.8,3.2);
		\end{tikzpicture}
		\hspace{4mm}
		\begin{tikzpicture}[scale=0.5,descr/.style={fill=white}]
			\tikzstyle{every node}=[thick,minimum size=4pt, inner sep=1pt]
			\node(r) at(0,-0.5)[minimum size=0pt, label=below:$(D)$]{};
			\node(0) at(0,0)[circle, draw, label=right:$\ T_n(n\geqslant 2)$]{};
			\node(1-1) at(-2,2)[circle, draw]{};
			\node(1-2) at(0,2){};
			\node(1-3) at (2,2){};
			\node(2-1) at(-3,3)[circle,draw]{};
			\node(3-1) at(-4,4)[circle, draw, fill=black]{};
			\node(4-1) at (-5,5){};
			\node(4-2) at(-3,5){};
			\draw(0)--(1-1);
			\draw(0)--(1-2);
			\draw(0)--(1-3);
			\draw[dotted, line width=0.5pt](-0.8,1)--(0.8,1);
			\draw[dotted, line width=0.5pt](1-1)--(2-1);
			\draw(2-1)--(3-1);
			\draw(3-1)--(4-1);
			\draw(3-1)--(4-2);
			\path[-,font=\scriptsize]
			(-1.8,1.2) edge [bend left=80] node[descr]{{\tiny$\sharp\geqslant 0$}} (-3.8,3.2);
		\end{tikzpicture}
	\end{eqnarray*}
	By the assumption that $\mathcal{T}$ is not effective and the speciality of the position of $\mathcal{S}$, one can see that the effective tree monomials in $\partial \mathcal{T}$ will only appear in the expansion of
	$$(-1)^{\sum_{t=1}^p |X_t| }(\cdots((((\cdots(X_1\circ_{i_1}X_2)\circ_{i_2}\cdots )\circ_{i_{p-1}}X_p)\circ_{i_p} \partial \mathcal{S})\circ_{j_1}Y_1)\circ_{j_2}  \cdots)\circ_{j_q}Y_q.$$
	
	Consider the tree monomial $$(\cdots((((\cdots(X_1\circ_{i_1}X_2)\circ_{i_2}\cdots )\circ_{i_{p-1}}X_p)\circ_{i_p} \widehat{ \mathcal{S}})\circ_{j_1}Y_1)\circ_{j_2} \cdots)\circ_{j_q}Y_q$$ in $\partial \mathcal{T}$. Then the path connecting root of $\widehat{\mathcal{S}}$ and $l$ must be one of the following forms:
	
		\begin{eqnarray*}
		%			\begin{tikzpicture}[scale=0.6]
		%		\tikzstyle{every node}=[thick,minimum size=4pt, inner sep=1pt]
		%		\node(r) at(0,-0.5)[minimum size=0pt,label=below:$(A)$]{};
		%		\node(1) at (0,0)[circle,draw,fill=black]{};
		%		\node(2-1) at(-1,1)[circle,draw,fill=black]{};
		%		\node(2-2) at (1,1) {};
		%		\node(3-1) at(-2,2){};
		%		\node(3-2) at(0,2){};
		%		\draw(1)--(2-1);
		%		\draw(1)--(2-2);
		%		\draw(2-1)--(3-1);
		%		\draw(2-1)--(3-2);
		%		\draw [dotted](-0.4,0.5)--(0.4,0.5);
		%			\end{tikzpicture}
		%		\hspace{5mm}
		\begin{tikzpicture}[scale=0.55,descr/.style={fill=white}]
			\tikzstyle{every node}=[thick,minimum size=4pt, inner sep=1pt]
			\node(r) at(0,-0.5)[minimum size=0pt,label=below:$(A)$]{};
			\node(1) at (0,0)[circle,draw,fill=black,label=right:$\ m_{n-1}$]{};
			\node(2-1) at(-1,1)[circle,draw,fill=black]{};
			\node(2-2) at (1,1) {};
			\node(3-2) at(0,2){};
			\node(3-1) at (-2,2)[circle,draw]{};
			\node(4-1) at (-3,3)[circle,draw]{};
			\node(5-1) at (-4,4){};
			\draw(1)--(2-1);
			\draw(1)--(2-2);
			\draw(2-1)--(3-1);
			\draw(2-1)--(3-2);
			\draw [dotted,line width=1pt](-0.4,0.5)--(0.4,0.5);
			\draw(2-1)--(3-1);
			\draw[dotted,line width=1pt] (3-1)--(4-1);
			\draw(4-1)--(5-1);
			\path[-,font=\scriptsize]
			(-1.8,1.2) edge [bend left=80] node[descr]{{\tiny$\sharp\geqslant 0$}} (-3.8,3.2);
		\end{tikzpicture}
		\hspace{10mm}
		\begin{tikzpicture}[scale=0.55,descr/.style={fill=white}]
			\tikzstyle{every node}=[thick,minimum size=4pt, inner sep=1pt]
			\node(r) at(0,-0.5)[minimum size=0pt,label=below:$(B)$]{};
			\node(1) at (0,0)[circle,draw,fill=black,label=right:$\ m_{n-1}$]{};
			\node(2-1) at(-1,1)[circle,draw,fill=black]{};
			\node(2-2) at (1,1) {};
			\node(3-2) at(0,2){};
			\node(3-1) at (-2,2)[circle,draw]{};
			\node(4-1) at (-3,3)[circle,draw]{};
			\node(5-1) at (-4,4)[circle,draw,fill=black]{};
			\node(6-1) at(-5,5){};
			\node(6-2) at (-3,5){};
			\draw(1)--(2-1);
			\draw(1)--(2-2);
			\draw(2-1)--(3-1);
			\draw(2-1)--(3-2);
			\draw [dotted,line width=1pt](-0.4,0.5)--(0.4,0.5);
			\draw(2-1)--(3-1);
			\draw[dotted,line width=1pt] (3-1)--(4-1);
			\draw(4-1)--(5-1);
			\draw(5-1)--(6-1);
			\draw(5-1)--(6-2);
			\path[-,font=\scriptsize]
			(-1.8,1.2) edge [bend left=80] node[descr]{{\tiny$\sharp\geqslant 1$}} (-3.8,3.2);
		\end{tikzpicture}\\
		\\
		\begin{tikzpicture}[scale=0.55,descr/.style={fill=white}]
			\tikzstyle{every node}=[thick,minimum size=4pt, inner sep=1pt]
			\node(r) at(0,-0.5)[minimum size=0pt,label=below:$(C)$]{};
			\node(1) at (0,0)[circle,draw,label=right:$\ T_{n-1}$]{};
			\node(2-1) at(-1,1)[circle,draw,fill=black]{};
			\node(2-2) at (1,1) {};
			\node(3-1) at(-2,2)[circle,draw]{};
			\node(3-2) at(0,2){};
			\node(4-1) at(-3,3)[circle,draw]{};
			\node(5-1) at (-4,4){};
			\draw(1)--(2-1);
			\draw(1)--(2-2);
			\draw(2-1)--(3-1);
			\draw(2-1)--(3-2);
			\draw[dotted,line width=1pt](3-1)--(4-1);
			\draw(4-1)--(5-1);
			\draw [dotted,line width=1pt](-0.4,0.5)--(0.4,0.5);
			\path[-,font=\scriptsize]
			(-1.8,1.2) edge [bend left=80] node[descr]{{\tiny$\sharp\geqslant 1$}} (-3.8,3.2);
		\end{tikzpicture}
		\hspace{10mm}
		\begin{tikzpicture}[scale=0.55,descr/.style={fill=white}]
			\tikzstyle{every node}=[thick,minimum size=4pt, inner sep=1pt]
			\node(r) at(0,-0.5)[minimum size=0pt,label=below:$(D)$]{};
			\node(1) at (0,0)[circle,draw,label=right:$\ T_{n-1}$]{};
			\node(2-1) at(-1,1)[circle,draw,fill=black]{};
			\node(2-2) at (1,1) {};
			\node(3-1) at(-2,2)[circle,draw]{};
			\node(3-2) at(0,2){};
			\node(4-1) at(-3,3)[circle,draw]{};
			\node(5-1) at (-4,4)[circle,draw,fill=black]{};
			\node(6-1) at(-5,5){};
			\node(6-2) at(-3,5){};
			\draw(1)--(2-1);
			\draw(1)--(2-2);
			\draw(2-1)--(3-1);
			\draw[dotted,line width=1pt](3-1)--(4-1);
			\draw(4-1)--(5-1);
			\draw(5-1)--(6-1);
			\draw(5-1)--(6-2);
			\draw(2-1)--(3-2);
			\draw [dotted,line width=1pt](-0.4,0.5)--(0.4,0.5);
			\path[-,font=\scriptsize]
			(-1.8,1.2) edge [bend left=80] node[descr]{{\tiny$\sharp\geqslant 1$}} (-3.8,3.2);
		\end{tikzpicture}
	\end{eqnarray*}
	So the tree monomial $$(\cdots((((\cdots(X_1\circ_{i_1}X_2)\circ_{i_2}\cdots )\circ_{i_{p-1}}X_p)\circ_{i_p} \widehat{ \mathcal{S}})\circ_{j_1}Y_1)\circ_{j_2} \cdots)\circ_{j_q}Y_q$$ is effective and its effective divisor is exactly $\widehat{\mathcal{S}}$ itself.
	% If the effective divisor of this tree monomial is not $\widehat{\mathcal{S}}$, it must be located on the path from root of $\widehat{\mathcal{S}}$ to the leftmost leaf above it, then one can see that $\mathcal{T}$ must be effective. It is a contradtion.
	Then we have
	{\small
	\begin{eqnarray*}&&\mathfrak{H}\partial \mathcal{T}\\
		&=&\mathfrak{H}((-1)^{\sum_{t=1}^p |X_t|}(\cdots((((\cdots(X_1\circ_{i_1}X_2)\circ\cdots )\circ_{i_{p-1}}X_p)\circ_{i_p} \partial \mathcal{S})\circ_{j_1}Y_1)\circ_{j_2}Y_2\cdots)\circ_{j_q}Y_q)\\
		&=&l_\mathcal{S}\mathfrak{H}\Big((-1)^{\sum_{t=1}^p |X_t|}(\cdots((((\cdots(X_1\circ_{i_1}X_2)\circ_{i_2}\cdots )\circ_{i_{p-1}}X_p)\circ_{i_p} \widehat{ \mathcal{S}})\circ_{j_1}Y_1)\circ_{j_2} \cdots)\circ_{j_q}Y_q\Big)\\
		&&+\mathfrak{H}\Big((-1)^{\sum_{t=1}^p |X_t|}(\cdots((((\cdots(X_1\circ_{i_1}X_2)\circ_{i_2}\cdots )\circ_{i_{p-1}}X_p)\circ_{i_p} (\partial \mathcal{S}-l_\mathcal{S}\widehat{\mathcal{S}}))\circ_{j_1}Y_1)\circ_{j_2} \cdots)\circ_{j_q}Y_q\Big)\\
		&=& l_\mathcal{S}\overline{\mathfrak{H}}\Big((-1)^{\sum_{t=1}^p |X_t|}(\cdots((((\cdots(X_1\circ_{i_1}X_2)\circ_{i_2}\cdots )\circ_{i_{p-1}}X_p)\circ_{i_p} \widehat{ \mathcal{S}})\circ_{j_1}Y_1)\circ_{j_2} \cdots)\circ_{j_q}Y_q\Big)\\
		&&+l_\mathcal{S}\mathfrak{H}\Big((-1)^{\sum_{t=1}^p |X_t|}(\cdots((((\cdots(X_1\circ_{i_1}X_2)\circ_{i_2}\cdots )\circ_{i_{p-1}}X_p)\circ_{i_p}  (\widehat{\mathcal{S}}-\frac{1}{l_\mathcal{S}}\partial \mathcal{S}))\circ_{j_1}Y_1)\circ_{j_2} \cdots)\circ_{j_q}Y_q\Big)\\
		&&+\mathfrak{H}\Big((-1)^{\sum_{t=1}^p |X_t|}(\cdots((((\cdots(X_1\circ_{i_1}X_2)\circ_{i_2}\cdots )\circ_{i_{p-1}}X_p)\circ_{i_p} (\partial \mathcal{S}-l_\mathcal{S}\widehat{\mathcal{S}}))\circ_{j_1}Y_1)\circ_{j_2} \cdots)\circ_{j_q}Y_q\Big)\\
		&=& l_\mathcal{S}\overline{\mathfrak{H}}\Big((-1)^{\sum_{t=1}^p |X_t|}(\cdots((((\cdots(X_1\circ_{i_1}X_2)\circ_{i_2}\cdots )\circ_{i_{p-1}}X_p)\circ_{i_p} \widehat{ \mathcal{S}})\circ_{j_1}Y_1)\circ_{j_2} \cdots)\circ_{j_q}Y_q\Big)\\
		&=&(\cdots((((\cdots(X_1\circ_{i_1}X_2)\circ_{i_2}\cdots )\circ_{i_{p-1}}X_p)\circ_{i_p} \mathcal{S})\circ_{j_1}Y_1)\circ_{j_2} \cdots)\circ_{j_q}Y_q\\
		&=& \mathcal{T} .
	\end{eqnarray*}
}
	This completes the proof.
  \end{proof}

\textbf{Proof of Theorem~\ref{Thm: Minimal model}:}

We have proved that the natural map $\phi: \RBinfty\twoheadrightarrow \RB$ is a surjective quasi-isomorphism, and  it can be easily seen that the differential $\partial$ on $\RBinfty$ satisfies the conditions $(1)$ and $(2)$ in Definition~\ref{Def: Minimal model of operads}.  We are done!

\medskip

\begin{remark}\label{rem: dotsenko} We are grateful to  Dotsenko who kindly pointed out an alternative proof of Theorem~\ref{Thm: Minimal model}.

Let $\mathfrak{Tr}$ be the free operad generated by a unary operation $T$ and a binary operation $\mu.$ Then $\RB\cong \mathfrak{Tr}/\langle\tilde{G}\rangle$ where $\tilde{G}$ is the defining relations for Rota-Baxter   algebras. In \cite{DK13}, the authors prove that $\tilde{G}$ is a Gr\"obner-Shirshov basis and they constructed the minimal model for the operad $\mathfrak{Tr}/\langle G\rangle$ where $G$ is the set of leading monomials in $\tilde{G}$. Denote this minimal model by ${}_m\RBinfty$.
	
	Now, let's introduce a new ordering $\prec$ on the the set $\mathfrak{Tr}(n)$ as follows: For two tree monomials $\mathcal{T, T'}$ in $\mathfrak{Tr}(n)$,
	\begin{itemize}
		\item[(1)]If $\mbox{weight}(\mathcal{T})<\mbox{weight}( \mathcal{T'})$, then $\mathcal{T}\prec\mathcal{T'}$.
		\item[(2)] If $\mbox{weight}(\mathcal{T})=\mbox{weight}( \mathcal{T'})$, compare $\mathcal{T}$ with $\mathcal{T'}$ via the natural path-lexicographic induced by setting $T\prec \mu.$
	\end{itemize}
With respect to the ordering $\prec$, the set $\mathfrak{Tr}(n)$ becomes a totally ordered set: $$\mathfrak{Tr}(n)=\{x_1\prec x_2\prec x_3\prec\dots\}\cong \mathbb{N}^+.$$ Now, we define a map $\omega:\RBinfty\rightarrow \mathfrak{Tr}$ by replacing vertices $m_n (n\geqslant 2), T_n(n\geqslant 1)$ by $\underbrace{\mu\circ_1\mu\circ_1\mu\dots\circ_1\mu}_{n-1}$ and $ \underbrace{T\circ_1\mu\circ_1T\circ_1\mu\circ_1\dots\circ_1T\circ_1\mu\circ_1T}_{2n-1}$ in $\mathfrak{Tr}$ respectively. Notice that different tree monomials in $\RBinfty$ may have same image in $\mathfrak{Tr}$ under the action of $\omega$.
 Define $\mathcal{F}^n_i $  to be the subspace of $\RBinfty(n)$ spanned by the tree monomials $\mathcal{R}$ with $\omega(\mathcal{R})$ smaller than or equal to $x_i$ with respect to  the ordering $\prec$ on $\mathfrak{Tr}(n)$.
Then we get a bounded below and exhaustive filtration for $\RBinfty(n)$ \[0=\mathcal{F}^n_0\subset \mathcal{F}^n_1\subset\mathcal{F}^n_2\subset \dots .\]
It  can be easily seen that the filtration is compatible with the differential $\partial$ on $\RBinfty(n)$. Moreover, one can prove that there is an isomorphism of complexes
$$\bigoplus_{i\geq0}\mathcal{F}^n_{i+1}/\mathcal{F}^n_i\cong {}_m\RBinfty(n).$$
Since all positive homologies of ${}_m\RBinfty(n)$   vanish, by classical spectral sequence argument, we have that all positive homologies of $\RBinfty(n)$ are trivial.

This provides another proof for   Theorem \ref{Thm: Minimal model}.

  According to the results in \cite{DK13}, the minimal model of the operad $\RB$ can be derived by employing a homotopical perturbation process on the minimal model $_m\RBinfty$ of the operad $_m\RB$. This process yields recursive formulas for both the differential and a homotopy within the minimal model of $\RB$. Our earlier constructions of the differential $\partial$ and homotopy $\mathfrak{H}$ on the differential graded operad $\RBinfty$ offer  explicit and concise formulas for these recursive expressions.

\end{remark}

By the general theory of minimal models of operads,{  the space spanned by} the generators of the minimal model for an operad is exactly the desuspension of its Quillen homology, i.e., the homology of the bar construction of the operad. So for the operad $\RB$,  denote by $\mathrm{B}(\RB)$  the bar construction of $\RB$ and   we have the following result.

\begin{cor} There exists   a quasi-isomorphism of homotopy cooperads between ${\RB^\ac}$ and $ \mathrm{B}(\RB),$  which induces  an isomorphism of
graded collections  $${\mathrm{H}}_\bullet(\mathrm{B}(\RB))\cong {\RB^\ac}.$$

\end{cor}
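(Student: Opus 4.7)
The plan is to derive this corollary from Theorem~\ref{Thm: Minimal model} via the bar--cobar adjunction. By Proposition~\ref{Prop: Linfinity give MC}, the quasi-isomorphism $\phi\colon\Omega(\RB^\ac)\xrightarrow{\sim}\RB$ supplied by Theorem~\ref{Thm: Minimal model} corresponds to a Maurer--Cartan element $\tau\in\mathbf{Hom}(\overline{\RB^\ac},\RB)^{\prod}_{-1}$ whose components are, essentially, the restrictions of $\phi$ to the generating collection $s^{-1}\overline{\RB^\ac}$. Since $\mathrm{B}(\RB)$ is cofreely generated as a graded cooperad by $s\overline{\RB}$ and its differential encodes exactly the twisting-morphism constraints, this Maurer--Cartan datum lifts canonically to a morphism of homotopy cooperads
\[
f_\tau\colon\RB^\ac\longrightarrow\mathrm{B}(\RB),
\]
whose component on a tree $T$ is obtained by iterating $\tau$ along $T$ through the cofree coproduct.

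To show $f_\tau$ is a quasi-isomorphism, I would compare it against the canonical bar--cobar counit $\epsilon\colon\Omega\mathrm{B}(\RB)\xrightarrow{\sim}\RB$, which is always a quasi-isomorphism of dg operads. The construction of $f_\tau$ gives a commutative triangle
\[
\Omega(\RB^\ac)\xrightarrow{\Omega(f_\tau)}\Omega\mathrm{B}(\RB)\xrightarrow{\epsilon}\RB
\]
with composite $\phi$. Applying two-out-of-three to $\phi$ and $\epsilon$ (both quasi-isomorphisms) forces $\Omega(f_\tau)$ to be a quasi-isomorphism.

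The remaining step, which I expect to be the technical heart of the proof, is to descend this conclusion to $f_\tau$ itself, since the cobar functor does not in general reflect quasi-isomorphisms. To handle this I would exploit the natural weight gradings on $\overline{\RB^\ac}$ (inherited from the explicit tree description in Section~\ref{Section: Koszul dual cooperad}) and on $s\overline{\RB}$: in each arity $n$ both sides are bounded in weight and finite-dimensional in each stratum, so the induced weight filtrations on $\Omega(\RB^\ac)$ and $\Omega\mathrm{B}(\RB)$ are bounded below, exhaustive, and arity-wise finite, yielding strongly convergent spectral sequences. The $E^1$-pages compute the cobar constructions of the associated graded cooperads, and under a standard identification $f_\tau$ is recovered on the generating-collection side; an isomorphism on $E^\infty$ coming from $\Omega(f_\tau)$ then forces an isomorphism on $E^1$, hence that $f_\tau$ is a quasi-isomorphism. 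Passing to homology yields the asserted isomorphism $\mathrm{H}_\bullet(\mathrm{B}(\RB))\cong\RB^\ac$ of graded collections. The principal obstacle is thus the convergence and identification of these weight spectral sequences, which rests squarely on the arity-wise finiteness visible in the tree formulas of Section~\ref{Section: Koszul dual cooperad}.
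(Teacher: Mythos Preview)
The paper does not actually prove this corollary in detail; it simply invokes, in the sentence immediately preceding the statement, the general principle that the generating collection of a minimal model of $\calp$ is the desuspension of the Quillen homology $\rmH_\bullet(\mathrm{B}(\calp))$. Your approach via the bar--cobar adjunction is a reasonable and more explicit route, and the steps through the commutative triangle and two-out-of-three for $\Omega(f_\tau)$ are sound (modulo the mild caveat that the twisting-morphism / adjunction formalism must be extended from dg cooperads to homotopy cooperads, which is standard but should be noted).

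There is, however, a genuine gap in your final step. You write that ``an isomorphism on $E^\infty$ coming from $\Omega(f_\tau)$ then forces an isomorphism on $E^1$''. Spectral sequences do not run backwards: an isomorphism on $E^\infty$ says nothing about $E^1$ in general, so this argument does not establish that $f_\tau$ is a quasi-isomorphism. The correct way to show that cobar reflects quasi-isomorphisms here is different: apply the bar functor $\mathrm{B}$ (which \emph{preserves} quasi-isomorphisms) to $\Omega(f_\tau)$, then use that the units $\RB^\ac\to\mathrm{B}\Omega(\RB^\ac)$ and $\mathrm{B}(\RB)\to\mathrm{B}\Omega\mathrm{B}(\RB)$ are quasi-isomorphisms, together with naturality and two-out-of-three. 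Alternatively, and closer to the paper's implicit logic, observe that $\RB^\ac$ has zero internal differential (all $\Delta_T$ have $\omega(T)\geqslant 2$), so the linear part of $\partial$ on the indecomposables of $\Omega(\RB^\ac)$ vanishes; comparing with the indecomposables of the bar--cobar resolution $\Omega\mathrm{B}(\RB)$, whose induced differential is exactly the bar differential, the homotopy equivalence of cofibrant resolutions of $\RB$ descends to a quasi-isomorphism on indecomposables, yielding $\overline{\RB^\ac}\cong\rmH_\bullet(\overline{\mathrm{B}(\RB)})$ directly.
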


\section{Homotopy Rota-Baxter   algebras}

Since we have found the operad of ``homotopy Rota-Baxter   algebras  of weight $\lambda$'',  we could now give the definition of homotopy Rota-Baxter   algebras.

\begin{defn}
	Let $(V,d_V)$ be a complex. Then a homotopy Rota-Baxter   algebra of weight $\lambda$ on $V$ is defined to be a morphism of dg operads from $\RBinfty$ to the endomorphism operad $\End_V$.
\end{defn}

Let $(V, d_V)$ be an algebra over the operad $\RBinfty$.
 Still denote by
$  m_n: V^{\ot n}\rightarrow V , n\geqslant 2$ (resp.$ T_n: V^{\ot n}\rightarrow V, n\geqslant 1$)  the image of $m_n\in \RBinfty$ (resp. $T_n\in \RBinfty$).
We also rewrite $m_1=d_V$. Then Equations~\eqref{Eq: defining HRB 1} and \eqref{Eq: defining HRB 2} give
\begin{eqnarray}\label{Eq: stasheff-id}
	\sum_{    i+j+k= n,\atop
i, k\geqslant 0, j\geqslant 1 } (-1)^{i+jk}m_{i+1+k}\circ\Big(\id^{\ot i}\ot m_j\ot \id^{\ot k}\Big)=0
\end{eqnarray} and
\begin{eqnarray} \label{Eq: homotopy RB-operator-version-2}
	   \sum\limits_{ l_1+\dots+l_k=n,\atop
 l_1, \dots, l_k\geqslant 1 } (-1)^{\delta}m_k\circ\Big(T_{l_1}\ot \cdots \ot T_{l_k}\Big)=\sum\limits_{1\leqslant q\leqslant p}\sum\limits_{  r_1+\dots+r_q+p-q=n,\atop
  r_1, \dots, r_q\geqslant 1 } \sum\limits_{ i+1+k=r_1,\atop
   i, k\geqslant 0 }\sum\limits_{  j_1+\dots+j_q+q-1=p,\atop
j_1, \dots, j_q\geqslant 0  }
\end{eqnarray}
	$$ \quad  \quad\quad\quad\quad\quad  (-1)^\eta\lambda^{p-q} T_{r_1}\circ\Big(\id^{\ot i}\ot m_p\circ(\id^{\ot j_1}\ot T_{r_2}\ot \id^{\ot j_2}\ot \cdots \ot T_{r_q}\ot \id^{\ot j_q})\ot \id^{\ot k}\Big),
$$
where
\begin{align*}\delta&=\frac{k(k-1)}{2}+\frac{n(n-1)}{2}+\sum_{j=1}^k(k-j)l_j,\\
	\eta&=\frac{p(p-1)}{2} +\sum_{j=1}^q\frac{r_j(r_j-1)}{2}+k+\sum_{l=2}^q\big(r_l-1\big)\big(i+\sum_{r=1}^{l-1}j_{r}+\sum_{t=2}^{l-1}r_t\big)+pi\\
	&=\frac{n(n-1)}{2}+i+(p+\sum\limits_{j=2}^q(r_j-1))k+\sum\limits_{l=2}^q(r_l-1)(\sum\limits_{r=l}^qj_r+q-l)
\end{align*}

We obtain thus an equivalent definition of  homotopy Rota-Baxter   algebras.
\begin{defn}
	Let $V$ be a graded space. A homotopy Rota-Baxter   algebra    of weight $\lambda$ on $V$  consists of  two families of graded maps
$  m_n: V^{\ot n}\rightarrow V , n\geqslant 1$  and $ T_n: V^{\ot n}\rightarrow V, n\geqslant 1$  with $|m_n|=n-2, |T_n|=n-1$,  subject to
Equations~\eqref{Eq: stasheff-id} and \eqref{Eq: homotopy RB-operator-version-2}.
\end{defn}

Equation~(\ref{Eq: stasheff-id}) is exactly the Stasheff identity   defining  $A_\infty$-algebras \cite{Sta63}. In particular,  the operator $m_1$ is a differential on $V$  and the operator $m_2$ induces an associative algebra structure on the homology   $\rmH_\bullet(V, m_1)$.

\begin{exam}
Expanding  Equation~\eqref{Eq: homotopy RB-operator-version-2}  for small  $n$'s gives the following:
\begin{itemize}
	\item[(i)]
	When $n=1$, $|T_1|=0$ and$$  m_1\circ T_1=T_1\circ m_1,$$
which implies that $T_1: (V, m_1)\to (V, m_1)$ is a chain map;

	\item[(ii)]
	when $n=2$,  $|T_2|=1$ and
 $$\begin{array}{ll}  &m_2\circ(T_1\ot T_1)-T_1\circ m_2\circ (\id\ot T_1)-T_1\circ m_2\circ (T_1\ot \id)-\lambda T_1\circ m_2  \\
	      =&-\partial(T_2)= -\big(m_1\circ T_2+T_2\circ (\id\ot m_1)+T_2\circ(m_1\ot \id)\big),
\end{array}$$
which 	shows that $T_1$ is a Rota-Baxter operator of weight $\lambda$ with respect to $m_2$, but only up to homotopy given by the operator $T_2$.

\end{itemize}
 Observe that for a homotopy Rota-Baxter   algebra $(V;  \{m_n\}_{n\geqslant 1}, \{T_n\}_{n\geqslant 1})$,  its homology $\rmH_\bullet(V, m_1)$ endowed with the operators induced by $m_2$ and $ T_1$ is
{    a } usual  Rota-Baxter   algebra.
\end{exam}

\bigskip

\section{From   the minimal model to the deformation complex and its   $L_\infty$-algebra structure}\label{Section: Linfinty algebras}

%\subsection{The $L_\infty$-algebra on the deformation complex of Rota-Baxter   algebras}

%In the last section, we construct the Koszul dual homotopy cooperad of the operad $\RB$ and we prove that its cobar construction is the minimal model of $\RB$.
In this section, we will use the minimal model $\RBinfty$, or more precisely the Koszul dual  homotopy cooperad ${\RB^{\ac}}$,  to determine the deformation complex as well as the $L_\infty$-algebra structure  on it for Rota-Baxter   algebras of arbitrary weight.

\begin{defn}Let $V$ be a graded space. Introduce an   $L_\infty$-algebra $\frakC_{\RBA}(V)$ associated to $V$ as $\frakC_{\RBA}(V):=\mathbf{Hom}({\RB^\ac}, \End_V)^{\prod}$.
	\end{defn}

Now, let's determine the $L_\infty$-algebra $\frakC_{\RBA}(V)$ explicitly. The sign rules in the homotopy cooperad ${\RB^\ac}$ are complicated, so we need some transformations. Notice that there is a natural isomorphism of operads $$\mathbf{Hom}(\cals, \End_{sV})\cong \End_V.$$
Explicitly, any $f\in \End_V(n)$ corresponds to an element $\tilde{f}\in\mathbf{Hom}(\cals, \End_{sV})(n)$ which is defined as $\big(\tilde{f}(\delta_n)\big)(sv_1\ot\cdots\ot sv_n)=(-1)^{\sum_{k=1}^{n-1}\sum_{j=1}^k|v_j|}(-1)^{(n-1)|f|}sf(v_1\ot \cdots \ot v_n)$ for any $v_1,\dots, v_n\in V$.

 Thus we have the following  isomorphisms of homotopy operads:
\begin{eqnarray*}\mathbf{Hom}\big({\RB^\ac}, \End_V\big)&\cong& \mathbf{Hom}\big({\RB^\ac}, \mathbf{Hom}(\cals, \End_{sV})\big)\\
	&\cong&\mathbf{Hom}\big({\RB^\ac}\ot_{\mathrm{H}}\cals,\End_{sV}\big)\\
	&=&\mathbf{Hom}\big({\mathscr{S}({\RB^\ac})}, \End_{sV}\big)
	\end{eqnarray*}
We obtain $$\frakC_{\RBA}(V)\cong \mathbf{Hom}\big({\mathscr{S}({\RB^\ac})}, \End_{sV}\big)^{\prod}.$$ Recall that ${\mathscr{S}({\RB^\ac})}(n)=\bfk u_n\oplus \bfk v_n$ with $|u_n|=0$ and $|v_n|=1$. By definition $$\mathbf{Hom}\big({\mathscr{S}({\RB^\ac})}, \mathrm{End}_{sV}\big)(n)=\Hom\big(\bfk u_n\oplus \bfk v_n, \Hom((sV)^{\ot n},sV)\big).$$ Each $f\in \Hom((sV)^{\ot n},sV)$ determines bijectively a map $\tilde{f}$ in $\Hom\big(\bfk u_n, \Hom((sV)^{\ot n},sV)\big)$ by imposing  $\tilde{f}(u_n)=f$, and each $g\in \Hom((sV)^{\ot n},V)$ is in bijection with a map $\hat{g}$ in $\Hom\big(\bfk v_n, \Hom((sV)^{\ot n},sV)\big)$ as $\hat{g}(v_n)=(-1)^{|g|}sg$.
Denote  $$\frakC_{\Alg}(V)=\prod\limits_{n\geqslant 1}\Hom((sV)^{\ot n},sV)\quad \mathrm{and}\quad  \frakC_{\RBO}(V)=\prod\limits_{n\geqslant 1}\Hom((sV)^{\ot n},V).$$
In this way, we identify $\frakC_{\RBA}(V)$ with $\frakC_{\Alg}(V)\oplus \frakC_{\RBO}(V)$.
 By the general theory recalled in Subsection~\ref{Subsection: Homotopy  (co)operads}, a direct computation gives the $L_\infty$-algebra structure on $\frakC_{\RBA}(V)$:
\begin{itemize}
	
	\item[(I)] For homogeneous elements $sf, sh\in \mathfrak{C}_{\Alg}(V)$, define $$l_2(sf\ot sh):= [sf, sh]_G\in\mathfrak{C}_{\Alg}(V).$$

	\item[(II)]
	\begin{itemize}	
		\item[(i)] Let $n\geqslant 1$.  For homogeneous elements $sh\in \Hom((sV)^{\ot n},sV)\subset \mathfrak{C}_{\Alg}(V)$ and $g_1,\dots, g_n\in \mathfrak{C}_{\RBO}(V)$,	define $$l_{n+1}(sh\ot g_1\ot \cdots \ot g_n)\in \mathfrak{C}_{\RBO}(V)$$  as :
		\begin{align*}&l_{n+1}(sh\ot g_1\ot \cdots \ot g_n)=\\
			&  \sum_{\sigma\in S_n}(-1)^{\eta}\Big(h\circ(sg_{\sigma(1)}\ot \cdots \ot sg_{\sigma(n)})-(-1)^{(|g_{\sigma(1)}|+1)(|h|+1)}s^{-1}(sg_{\sigma(1)})\big\{sh\big\{sg_{\sigma(2)},\dots,sg_{\sigma(n)}\big\}\big\}\Big),
		\end{align*}
		where $(-1)^{\eta}=\chi(\sigma; g_1,\dots,g_n)(-1)^{n(|h|+1)+\sum\limits_{k=1}^{n-1}\sum\limits_{j=1}^k|g_{\sigma(j)}|}$.

		\item[(ii)]  Let $n\geqslant 2$.  For homogeneous elements $sh\in \Hom((sV)^{\ot n},sV)\subset \mathfrak{C}_{\Alg}(V)$ and $g_1,\dots ,g_m\in \mathfrak{C}_{\RBO}(V)$ with $1\leqslant m\leqslant n-1$, define
		$$l_{m+1}(sh\ot g_1\ot \cdots\ot g_m)\in \mathfrak{C}_{\RBO}(V)$$ to be:
		\[l_{m+1}(sh\ot g_1\ot \cdots\ot g_m)=\sum_{\sigma\in S_m}(-1)^\xi\lambda^{n-m}  s^{-1} (sg_{\sigma(1)})\big\{sh\big\{sg_{\sigma(2)},\dots,sg_{\sigma(m)}\big\}\big\},\]
		where $(-1)^\xi=\chi(\sigma; g_1,\dots,g_m)(-1)^{1+m(|h|+1)+\sum\limits_{k=1}^{m-1}\sum\limits_{j=1}^k|g_{\sigma(j)}|+(|h|+1)(|g_{\sigma(1)}|+1)}$.
	\end{itemize}
	
	\smallskip
	
	\item[(III)]  Let $m\geqslant 1$.  For homogeneous elements $sh\in \Hom((sV)^{\ot n},sV)\subset \mathfrak{C}_{\Alg}(V), g_1,\dots,g_m\in \Hom(T^c(sV),V)\subset \mathfrak{C}_{\RBO}(V)$ with $1\leqslant m\leqslant n$, for $1\leqslant k\leqslant m$,  define $$l_{m+1}(g_1\ot \cdots\ot g_k\ot sh \ot g_{k+1}\ot \cdots\ot g_m)\in \mathfrak{C}_{\RBO}(V)$$ to be
	$$l_{m+1}(g_1\ot \cdots\ot g_k\ot sh \ot g_{k+1}\ot \cdots\ot g_m)=(-1)^{(|h|+1)(\sum\limits_{j=1}^k|g_j|)+k}l_{m+1}(sh\ot g_1\ot \cdots \ot g_m),$$
	where the RHS has been introduced in (III) (i) and (ii).

	\item[(IV)] All other  components of operators $\{l_n\}_{n\geqslant 1}$ vanish.
\end{itemize}

This is exactly the $L_\infty$-structure found in \cite{WZ21} by direct inspections.   We define a filtration $\mathcal{F}_1\supset \mathcal{F}_2\supset \cdots \supset \mathcal{F}_n\supset \cdots$ on $\mathfrak{C}_{\RBA}(V)$ by setting $$ \mathcal{F}_n=\mathfrak{C}_\Alg^{\geqslant n}(V)\oplus\mathfrak{C}_{\RBO}^{\geqslant n}(V), \forall n\geqslant 1,$$ where $$\mathfrak{C}_\Alg^{\geqslant n}(V)=\prod_{k\geqslant n}\Hom((sV)^{\ot k}, sV), \mathfrak{C}_{\RBO}^{\geqslant n}(V)=\prod_{k\geqslant n}\Hom((sV)^{\ot k}, V).$$ It is not difficult to see that the $L_\infty$-algebra $\mathfrak{C}_\RBA(V)$ is weakly filtered with respect to the filtration $\mathcal{F}_\bullet$.

Proposition~\ref{Prop: Linfinity give MC} gives immediately an alternative  definition of homotopy Rota-Baxter   algebras.
\begin{prop}
	A homotopy Rota-Baxter   algebra structure of weight $\lambda$ on a graded space is equivalent to a Maurer-Cartan element in the   weakly filtered  $L_\infty$-algebra $\frakC_{\RBA}(V)$. In particular, when $V$ is concentrated in degree $0$, a Maurer-Cartan element in $\frakC_{\RBA}(V)$ gives a Rota-Baxter   algebra structure of weight $\lambda$ on $V$.
\end{prop}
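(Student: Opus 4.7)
The plan is to deduce this statement as a direct corollary of Proposition~\ref{Prop: Linfinity give MC} applied to the coaugmented homotopy cooperad $\calc=\RB^\ac$ and the unital dg operad $\calp=\End_V$. By the definition of homotopy Rota-Baxter algebras, such a structure on $V$ is by definition a morphism of dg operads $\Omega(\RB^\ac)=\RBinfty\to \End_V$. The cited proposition then produces a natural bijection between such morphisms and the Maurer-Cartan set of the $L_\infty$-algebra $\mathbf{Hom}(\overline{\RB^\ac},\End_V)^{\prod}=\frakC_{\RBA}(V)$, which is precisely the first assertion. The content of the statement is therefore essentially a reformulation, once one agrees that the $L_\infty$-brackets on $\frakC_{\RBA}(V)$ are the ones listed in items (I)--(IV) before the proposition; this identification is performed by transporting the computation through the operadic suspension isomorphism $\mathbf{Hom}(\cals,\End_{sV})\cong \End_V$, under which $\frakC_{\RBA}(V)\cong \mathbf{Hom}(\overline{\mathscr{S}(\RB^\ac)},\End_{sV})^{\prod}$, so that only the sign conventions of $\mathscr{S}(\RB^\ac)$ (not those of $\RB^\ac$) have to be tracked.

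With this identification in hand, the operations $\{l_n\}$ come from the anti-symmetrization of the homotopy operad structure on $\mathbf{Hom}(\overline{\mathscr{S}(\RB^\ac)},\End_{sV})$, whose defining operations are dual to the cooperations $\Delta_T$ on $\mathscr{S}(\RB^\ac)$ of types (I), (II) and (III) constructed in Section~\ref{Section: Koszul dual cooperad}. Trees of type (I) contribute the Gerstenhaber bracket on $\frakC_{\Alg}(V)$ (item~(I)) together with the $\lambda^{j-1}$-part of the action of $\frakC_{\Alg}(V)$ on $\frakC_{\RBO}(V)$; trees of type (II) give the plain brace terms $h\circ(sg_{\sigma(1)}\otimes\cdots\otimes sg_{\sigma(n)})$ of item (II)(i); and trees of type (III) provide the nested brace terms $s^{-1}(sg_{\sigma(1)})\{sh\{sg_{\sigma(2)},\dots,sg_{\sigma(m)}\}\}$ weighted by $\lambda^{p-q}$ appearing in (II)(i) and (II)(ii), while item~(III) is just the anti-symmetrization rule. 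The only real labour is bookkeeping of Koszul signs: the signs $\eta$ and $\xi$ listed in (II) must match the combined contributions from the cocomposition signs $(j-1)(n-i+1)$, $\frac{k(k-1)}{2}$ and $\gamma$ of Section~\ref{Section: Koszul dual cooperad}, the anti-symmetrization character $\chi(\sigma;\,\cdot)$, and the sign twist produced by $\mathbf{Hom}(\cals,\End_{sV})\cong\End_V$. I would check the match on one representative tree of each of the three types; the remaining cases then follow by the general shape of the Koszul rule.

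For the second assertion, assume $V$ is concentrated in degree $0$, so $sV$ is concentrated in degree $1$. A Maurer-Cartan element $\alpha\in\frakC_{\RBA}(V)_{-1}$ can have nontrivial components only in degrees where $\Hom((sV)^{\otimes n},sV)$ or $\Hom((sV)^{\otimes n},V)$ has a degree $-1$ part; inspection shows this forces $\alpha=(sm,T)$ with $sm\in\Hom((sV)^{\otimes 2},sV)$ corresponding to a binary product $m\colon V^{\otimes 2}\to V$, and $T\in\Hom(sV,V)$ corresponding to a unary operator on $V$, all other components vanishing. Substituting $\alpha$ into the Maurer-Cartan equation \eqref{Eq: mc-equation}, only finitely many brackets survive on each component: the $\frakC_{\Alg}(V)$-component reduces to $\tfrac12[sm,sm]_G=0$, i.e.\ associativity of $m$; the $\frakC_{\RBO}(V)$-component collects the contributions of $l_2(sm\otimes T)$, $l_3(sm\otimes T\otimes T)$ and the $\lambda$-weighted bracket coming from trees of type (III) with $p=2,q=1$, and an elementary rewriting shows that this is exactly \eqref{Eq: Rota-Baxter relation in terms of maps}. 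The main obstacle in the whole argument is therefore nothing conceptual but the sign verification in the second paragraph; once this is done, both assertions are immediate.
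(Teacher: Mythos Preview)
Your proposal is correct and follows exactly the approach indicated in the paper: the paper simply remarks that ``Proposition~\ref{Prop: Linfinity give MC} gives immediately an alternative definition of homotopy Rota-Baxter algebras'' and states the proposition without further argument. Your write-up spells out the details that the paper leaves implicit, namely the transport through the isomorphism $\mathbf{Hom}(\cals,\End_{sV})\cong\End_V$ to reduce to $\mathscr{S}(\RB^\ac)$ and the tree-by-tree identification of the brackets, as well as the degree count in the second assertion; none of this adds anything new, but it makes the one-line proof honest.
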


 \bigskip
		
	\section{Cohomology theory of Rota-Baxter   algebras}\
\label{Cohomology theory of Rota-Baxter   algebras}

Now we introduce the cochain complex of a   Rota-Baxter   algebra   with coefficients in a Rota-Baxter bimodule. We will see that this is exactly the underlying  complex of Rota-Baxter   algebras in Section~\ref{Section: Linfinty algebras}. An explicit example is also included.

\subsection{Cohomology theory}\

Let $(A, \mu)$ be an associative  algebra and $M$ be a bimodule over it. Recall that the Hochschild cochain complex of $A$ with coefficients in $M$ is $$\C^\bullet_{\mathrm{Alg}}(A,M):=\bigoplus\limits_{n=0}^\infty \C^n_{\mathrm{Alg}}(A,M),$$ where $\C^n_{\mathrm{Alg}}(A,M)=\Hom(A^{\ot n},M)$ and the differential $\delta^n: \C^n_{\mathrm{Alg}}(A,M)\rightarrow \C^{n+1}_{\mathrm{Alg}}(A,M)$ is defined as:
$$\delta^n(f)(a_{1, n+1}  )=  (-1)^{n+1} a_1f(a_{2, n+1})+\sum\limits_{i=1}^n(-1)^{n-i+1}f(a_{1, i-1}\ot a_i\cdot a_{i+1}\ot  a_{i+2, n+1})\\
	 + f(a_{1, n})a_{n+1}
$$
for all $f\in \C^n_{\mathrm{Alg}}(A,M), a_1,\dots,a_{n+1}\in A$.
 The cohomology of the Hochschild cochain complex $\C^\bullet_{\mathrm{Alg}}(A,M)$ is called the Hochschild cohomology of $A$ with coefficients in $M$,  denoted by $\mathrm{HH}^\bullet(A,M)$.
When the bimodule $M$ is the regular bimodule $A$ itself, we just denote $\C^\bullet_{\mathrm{Alg}}(A,A)$ by $\C^\bullet_{\mathrm{Alg}}(A)$ and call it the Hochschild cochain complex of associative algebra $(A, \mu)$.  Denote the cohomology $\mathrm{HH}^\bullet(A, A)$ by $\mathrm{HH}^\bullet(A)$, called the Hochschild cohomology of associative algebra $(A,\mu)$.

Let $(A, \mu, T)$ be a
Rota-Baxter   algebra and $(M,T_M)$ be a Rota-Baxter bimodule over it. Recall that
Proposition~\ref{Prop: new RB algebra} and Proposition~\ref{Prop: new-bimodule}  give a new
associative algebra  $A_\star $ and
  a new   Rota-Baxter bimodule  $_\rhd M_\lhd$ over $A_\star $.
 Consider the Hochschild cochain complex of $A_\star $ with
 coefficients in $_\rhd M_\lhd$:
 $$\C^\bullet_{\mathrm{Alg}}(A_\star , {_\rhd
 	M_\lhd})=\bigoplus\limits_{n=0}^\infty \C^n_{\mathrm{Alg}}(A_\star , {_\rhd
 	M_\lhd}).$$
  More precisely,  for $n\geqslant 0$,  $ \C^n_{\mathrm{Alg}}(A_\star , {_\rhd M_\lhd})=\Hom  (A^{\ot n},M)$ and its differential $$\partial^n:
 \C^n_{\mathrm{Alg}}(A_\star ,\  _\rhd M_\lhd)\rightarrow  \C^{n+1}_{\mathrm{Alg}}(A_\star , {_\rhd M_\lhd}) $$ is defined as:
 \begin{align*}&\partial^n(f)(a_{1, n+1}) \\
 =&(-1)^{n+1} a_1\rhd f(a_{2, n+1})+\sum_{i=1}^n(-1)^{n-i+1}f(a_{1, i-1}\ot a_{i}\star  a_{i+1} \ot   a_{i+2, n+1})
  +f(a_{1, n})\lhd a_{n+1}\\
 =&(-1)^{n+1}\Big(T(a_1)f(a_{2, n+1})-T_M\big(a_1f(a_{2, n+1})\big)\Big)\\
&+\sum_{i=1}^n(-1)^{n-i+1}\Big(f(a_{1, i-1}\ot a_iT(a_{i+1})\ot   a_{i+2, n+1})+f(a_{1, i-1} \ot T(a_i)a_{i+1}\ot   a_{i+2, n+1})
\\ &\quad  +\lambda f(a_{1, i-1} \ot a_ia_{i+1}\ot   a_{i+2, n+1})\Big)\\
&+ \Big(f(a_{1, n})T(a_{n+1})-T_M\big(f(a_{1,n})a_{n+1}\big)\Big)
 \end{align*}
 for any $f\in  \C^n_{\Alg}(A_\star ,\  _\rhd M_\lhd)$ and $a_1,\dots,a_{n+1}\in A$.

 \smallskip

 \begin{defn}\label{Def: Cohomology theory of Rota-Baxter operators}
 	Let $A=(A,\mu,T)$ be a Rota-Baxter   algebra of weight $\lambda$ and $M=(M,T_M)$ be a Rota-Baxter bimodule over it. Then the cochain complex $(\C^\bullet_\Alg(A_\star, {_\rhd M_\lhd}),\partial)$ is called the cochain complex of Rota-Baxter operator $T$ with coefficients in $(M, T_M)$,  denoted by $C_{\RBO}^\bullet(A, M)$. The cohomology of $C_{\RBO}^\bullet(A,M)$, denoted by $\mathrm{H}_{\RBO}^\bullet(A,M)$, {  is} called the cohomology of Rota-Baxter operator $T$ with coefficients in $(M, T_M)$.
 	
 	 When $(M,T_M)$ is the regular Rota-Baxter bimodule $ (A,T)$, we denote $\C^\bullet_{\RBO}(A,A)$ by $\C^\bullet_{\RBO}(A)$ and call it the cochain complex of  Rota-Baxter operator $T$, and denote $\rmH^\bullet_{\RBO}(A,A)$ by $\rmH^\bullet_{\RBO}(A)$ and call it the cohomology of Rota-Baxter operator $T$.
 \end{defn}

Let $M=(M,T_M)$ be a  Rota-Baxter bimodule over a Rota-Baxter   algebra of weight $\lambda$ $A=(A,\mu,T)$. Now, let's construct a chain map   $$\Phi^\bullet:\C^\bullet_{\Alg}(A,M) \rightarrow C_{\RBO}^\bullet(A,M),$$ i.e., the following commutative diagram:
\[\xymatrix{
		\C^0_{\Alg}(A,M)\ar[r]^-{\delta^0}\ar[d]^-{\Phi^0}& \C^1_{\Alg}(A,M)\ar@{.}[r]\ar[d]^-{\Phi^1}&\C^n_{\Alg}(A,M)\ar[r]^-{\delta^n}\ar[d]^-{\Phi^n}&\C^{n+1}_{\Alg}(A,M)\ar[d]^{\Phi^{n+1}}\ar@{.}[r]&\\
		\C^0_{\RBO}(A,M)\ar[r]^-{\partial^0}&\C^1_{\RBO}(A,M)\ar@{.}[r]& \C^n_{\RBO}(A,M)\ar[r]^-{\partial^n}&\C^{n+1}_{\RBO}(A,M)\ar@{.}[r]&
.}\]

Define $\Phi^0=\Id_{\Hom(k,M)}=\Id_M$, and for  $n\geqslant 1$ and $ f\in \C^n_{\Alg}(A,M)$,  define $\Phi^n(f)\in \C^n_{\RBO}(A,M)$ as:
\begin{align*}
 &\Phi^n(f)(a_1\ot\cdots \ot a_n) \\
=&f(T(a_1)\ot \cdots \ot T(a_n))\\
&-\sum_{k=0}^{n-1}\lambda^{n-k-1}\sum_{1\leqslant i_1<i_2<\dots<i_k\leqslant n}T_M\circ f(a_{1, i_1-1} \ot T(a_{i_1})\ot a_{i_1+1, i_2-1}\ot T(a_{i_2})\ot \cdots \ot T(a_{i_k})\ot   a_{i_k+1, n}).
\end{align*}

\smallskip

\begin{prop}\label{Prop: Chain map Phi}
	The map $\Phi^\bullet: \C^\bullet_\Alg(A,M)\rightarrow \C^\bullet_{\RBO}(A,M)$ is a chain map.
\end{prop}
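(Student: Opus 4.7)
The plan is to verify the chain-map identity $\partial^n \circ \Phi^n = \Phi^{n+1} \circ \delta^n$ by direct expansion of both sides, organized as sums indexed by the subsets $S \subseteq \{1,\ldots,n+1\}$ of positions at which the Rota-Baxter operator $T$ is inserted. First I would rewrite $\Phi^n(f)$ uniformly as
\[
 \Phi^n(f)(a_1 \otimes \cdots \otimes a_n) = \sum_{S \subseteq \{1,\ldots,n\}} c^n_S\; \varepsilon_S\!\bigl(f(b^S_1 \otimes \cdots \otimes b^S_n)\bigr),
\]
where $b^S_i = T(a_i)$ for $i \in S$ and $b^S_i = a_i$ otherwise, $\varepsilon_S = \mathrm{id}$ if $S = \{1,\ldots,n\}$ and $\varepsilon_S = -T_M$ otherwise, and $c^n_S = 1$ when $S$ is full while $c^n_S = \lambda^{n-|S|-1}$ when $S$ is proper. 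The degree $0$ case $\Phi^0 = \mathrm{id}_M$ is then a direct check from the definitions of $\rhd$ and $\lhd$.

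Next I would expand $\Phi^{n+1}(\delta^n f)(a_1 \otimes \cdots \otimes a_{n+1})$: the Hochschild differential $\delta^n f$ splits as two boundary terms (left action of $a_1$, right action of $a_{n+1}$) and $n$ interior terms (involving products $a_i \cdot a_{i+1}$), and applying $\Phi^{n+1}$ distributes each over subsets of insertion positions in $\{1,\ldots,n+1\}$. In parallel I would expand $\partial^n(\Phi^n f)(a_1 \otimes \cdots \otimes a_{n+1})$: here the interior of $\partial^n$ uses the star product $a_i \star a_{i+1} = a_i T(a_{i+1}) + T(a_i) a_{i+1} + \lambda\, a_i a_{i+1}$, while the two boundary pieces decompose via $a_1 \rhd {-} = T(a_1)\,{-}\, - \, T_M(a_1\,{-})$ and its right analogue.

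The matching then proceeds term by term. The crucial algebraic input is that whenever a term of $\partial^n \Phi^n(f)$ produces two adjacent factors $T(a_i)T(a_{i+1})$ sitting inside $f$, the Rota-Baxter identity
\[
 T(a)T(b) = T\bigl(aT(b)\bigr) + T\bigl(T(a)b\bigr) + \lambda\, T(ab)
\]
together with its bimodule analogues
\[
 T(a) T_M(m) = T_M\bigl(aT_M(m) + T(a)m + \lambda am\bigr), \qquad T_M(m)T(a) = T_M\bigl(mT(a) + T_M(m)a + \lambda ma\bigr)
\]
rewrites this pair as exactly the three summands produced by $\Phi^{n+1}$ applied to the corresponding interior summand $a_i \star a_{i+1}$ of $\delta^n f$. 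The two exterior boundaries match similarly via the definitions of $\rhd$, $\lhd$ together with the defining relations of a Rota-Baxter bimodule.

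The main obstacle is the combinatorial bookkeeping: for each fixed subset $S$ the contributions come from several sources (interior vs.\ boundary, full vs.\ proper, on each of the two sides), and one must verify that signs and powers $\lambda^{n-|S|}$ balance. The cleanest way is to fix $S \subseteq \{1,\ldots,n+1\}$ together with an adjacent pair $\{i,i+1\} \subseteq S$ (a ``collision site'') and show that the single $\partial^n \Phi^n$-term indexed by $S$, which contains the factor $T(a_i)T(a_{i+1})$, balances against the three $\Phi^{n+1} \delta^n$-terms indexed respectively by $S \setminus \{i\}$, $S \setminus \{i+1\}$, and $S \setminus \{i,i+1\}$ via the Rota-Baxter identity, with the analogous argument at the two outer positions absorbing the boundary contributions. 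A final inclusion-exclusion check then confirms that every $S$ appears with the correct total coefficient on both sides.
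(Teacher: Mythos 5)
Your direct verification is sound, but it is a genuinely different route from the paper's. The paper gives no term-by-term computation at all: it observes that the chain-map identity $\partial^n\circ\Phi^n=\Phi^{n+1}\circ\delta^n$ is exactly the condition that the mapping cone $\C^\bullet_{\RBA}(A,M)$ of $\Phi^\bullet$ (Definition~\ref{Def: Cohomology of RB algebras}) squares to zero, and then deduces this from Proposition~\ref{Prop: cohomlogy complex as underlying complex of L infinity algebra}, which identifies that cone with the underlying complex of the $L_\infty$-algebra $\frakC_{\RBA}(A)$ twisted by the Maurer--Cartan element encoding $(\mu,T)$; the differential of a twisted $L_\infty$-algebra automatically squares to zero, so $\Phi^\bullet$ is a chain map for free. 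Your approach is the elementary one (essentially the ``direct inspection'' the authors relegate to the earlier version \cite{WZ21}): it requires no operadic machinery and makes visible exactly where the Rota--Baxter identity and its bimodule analogues enter, at the cost of the combinatorial bookkeeping over subsets $S$, signs, and powers of $\lambda$, which you correctly identify as the main burden and which does close up (e.g.\ a proper $S$ of size $k$ in $\C^{n+1}$ carries $\lambda^{n-k}$, matching the merged-slot term $\lambda^{n-(k-1)-1}$ on the other side). One small imprecision: the literal product $T(a_i)T(a_{i+1})$ arises on the $\Phi^{n+1}\delta^n$ side when $\{i,i+1\}\subseteq S$, whereas the $\partial^n\Phi^n$ side produces $T(a_i\star a_{i+1})$ when the merged slot is a $T$-position (these two are equated by the Rota--Baxter relation), and the three-way splitting occurs when the merged slot is \emph{not} a $T$-position; your description swaps which side carries the single term, but the matching mechanism is the same and the argument goes through once this is stated carefully.
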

This result is equivalent to the fact that the  cochain complex $(\C^\bullet_{\RBA}(A,M), d^\bullet)$  of Rota-Baxter   algebra $(A,\mu,T)$ with coefficients in $(M,T_M)$ in the following definition is a cochain complex, so it follows from Proposition~\ref{Prop: cohomlogy complex as underlying complex of L infinity algebra}.

%Multiplying $\Phi^n$ by $(-1)^n$, we can make the above commutative diagram into a bicomplex, denote it by $\C^{\bullet,\bullet}_{\RBA}(A,M)$.
\begin{defn}\label{Def: Cohomology of RB algebras}
 Let $M=(M,T_M)$ be a  Rota-Baxter bimodule over a Rota-Baxter   algebra of weight $\lambda$ $A=(A,\mu,T)$.  We define the  cochain complex $(\C^\bullet_{\RBA}(A,M), d^\bullet)$  of Rota-Baxter   algebra $(A,\mu,T)$ with coefficients in $(M,T_M)$ to {  be} the negative shift of the mapping cone of $\Phi^\bullet$, that is,   let
\[\C^0_{\RBA}(A,M)=\C^0_\Alg(A,M)  \quad  \mathrm{and}\quad   \C^n_{\RBA}(A,M)=\C^n_\Alg(A,M)\oplus \C^{n-1}_{\RBO}(A,M), \forall n\geqslant 1,\]
 and the differential $d^n: \C^n_{\RBA}(A,M)\rightarrow \C^{n+1}_{\RBA}(A,M)$ is given by \[d^n(f,g)= (\delta^n(f), -\partial^{n-1}(g)  -\Phi^n(f))\]
 for any $f\in \C^n_\Alg(A,M)$ and $g\in \C^{n-1}_{\RBO}(A,M)$.
The  cohomology of $(\C^\bullet_{\RBA}(A,M), d^\bullet)$, denoted by $\rmH_{\RBA}^\bullet(A,M)$,  is called the cohomology of the Rota-Baxter   algebra $(A,\mu,T)$ with coefficients in $(M,T_M)$.
When $(M,T_M)=(A,T)$, we just denote $\C^\bullet_{\RBA}(A,A), \rmH^\bullet_{\RBA}(A,A)$   by $\C^\bullet_{\RBA}(A),  \rmH_{\RBA}^\bullet(A)$ respectively, and call  them the cochain complex, the cohomology of Rota-Baxter   algebra $(A,\mu,T)$ respectively.
\end{defn}
{
It is easy to see that the following proposition holds:
\begin{prop}\label{Prop: Exact sequence}
There is a short exact sequence of complexes:
\begin{eqnarray}\label{Seq of complexes} 0\to s^{-1}\C^\bullet_{\RBO}(A,M)\xrightarrow{i} \C^\bullet_{\RBA}(A,M)\xrightarrow{p} \C^\bullet_{\Alg}(A,M)\to 0\end{eqnarray}
where $i,p$ are the natural inclusion and projection respectively. Therefore we have a long exact sequence of cohomology groups
$$0\to \rmH^{0}_{\RBA}(A, M)\to\mathrm{HH}^0(A, M)\to\rmH^0_{\RBO}(A, M) \to \rmH^{1}_{\RBA}(A, M)\to\mathrm{HH}^1(A, M)\to\cdots$$
$$\cdots\to \mathrm{HH}^p(A, M)\to \rmH^p_{\RBO}(A, M)\to \rmH^{p+1}_{\RBA}(A, M)\to \mathrm{HH}^{p+1}(A, M)\to \cdots.$$
\end{prop}
}

%We will show that this is  the right deformation cohomology theory in Section~\ref{Section: Linfinty algebras}.

\subsection{Maurer-Cartan characterisation of   Rota-Baxter   algebras}\

Now we verify easily by direct computation that the cohomology theory introduced above is exactly the deformation cohomology of Rota-Baxter   algebras of arbitrary weight.
 \begin{prop}\label{Prop: cohomlogy complex as underlying complex of L infinity algebra}
	Let $(A,\mu,T)$ be a Rota-Baxter   algebra of weight $\lambda$. Twist the $L_\infty$-algebra $\mathfrak{C}_{\RBA}(A)$ by the Maurer-Cartan element corresponding to the Rota-Baxter   algebra structure $(A,\mu,T)$, then its  underlying complex is exactly $s\C^\bullet_{\RBA}(A)$, the shift of the cochain complex of Rota-Baxter   algebra $(A, \mu, T)$, introduced in Definition~\ref{Def: Cohomology of RB algebras}.
\end{prop}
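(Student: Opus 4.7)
The plan is to verify by direct expansion of the twisting formula \eqref{Eq: twisted L infinity algebra} that the differential $l_1^\alpha$ on $\frakC_{\RBA}(A)$ coincides, under the identifications established in Section~\ref{Section: Linfinty algebras}, with the differential $d^\bullet$ of Definition~\ref{Def: Cohomology of RB algebras}. The Maurer-Cartan element corresponding to $(A,\mu,T)$ decomposes as $\alpha = \alpha_\mu + \alpha_T$ where $\alpha_\mu \in \frakC_{\Alg}(A)_{-1}$ is the image of $\mu$ and $\alpha_T \in \frakC_{\RBO}(A)_{-1}$ is the image of $T$. A key observation is that the brackets~(I)--(III) defined in Section~\ref{Section: Linfinty algebras} admit at most one $\frakC_{\Alg}$-argument, so the infinite sum defining $l_1^\alpha$ collapses to a finite one: for $x \in \frakC_{\RBA}(A)$, only brackets with a single algebra input taken from $\{\alpha_\mu\}\cup(\{x\}\cap \frakC_{\Alg})$ together with some number of copies of $\alpha_T$ can contribute, and the arity of the algebra input caps the permissible number of $\alpha_T$'s.

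First I would compute the $\frakC_{\Alg}$-component of $l_1^\alpha(f,g)$ for $(f,g)\in \C^n_{\Alg}(A)\oplus \C^{n-1}_{\RBO}(A)$. Because (II) and (III) are $\frakC_{\RBO}$-valued, only the Gerstenhaber bracket of~(I) is relevant; the contribution is $\pm l_2(\alpha_\mu,f)$, which under the natural isomorphism recovers the Hochschild coboundary $\delta^n f$. The operator input $g$ alone cannot produce a $\frakC_{\Alg}$-output, since no bracket has $\frakC_{\Alg}$-target with only $\frakC_{\RBO}$-inputs.

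Next I would analyse the $\frakC_{\RBO}$-component, which splits into two families. The first consists of brackets $l_{k+1}(\alpha_\mu, \alpha_T^{\otimes(k-1)}, g)$, with $g$ treated as a $\frakC_{\RBO}$-entry. Since $\alpha_\mu$ has arity two, only $k=1$ and $k=2$ contribute. The case $k=1$ uses formula~(II)(ii) with $m=1$, $n=2$, producing the $\lambda$-weighted insertion $\lambda\,g(\cdots a_i a_{i+1}\cdots)$. The case $k=2$ uses~(II)(i) with $n=m=2$; summing over $\sigma\in S_2$, the leading term $\mu\circ(sg_{\sigma(1)}\ot sg_{\sigma(2)})$ yields the outer $T$-multiplication $T(a_1)g(\cdots)$ and its right analogue, while the correction term $s^{-1}(sg_{\sigma(1)})\{s\mu\{sg_{\sigma(2)}\}\}$ yields $-T(a_1 g(\cdots))$ and the inner $\star$-pieces $g(\cdots a_i T(a_{i+1})\cdots)+g(\cdots T(a_i)a_{i+1}\cdots)$, assembling exactly $\partial^{n-1}g$ (with the global sign $-1$ prescribed in $d^n$). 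The second family consists of brackets $l_{m+1}(f,\alpha_T^{\otimes m})$ for $1\leq m\leq n$, where $f$ plays the role of $sh$. The $m!$-fold symmetrisation cancels the $1/m!$ from the twisting formula; formula~(II)(i) with $m=n$ produces $f(T(a_1),\ldots,T(a_n))$, while~(II)(ii) for $1\leq m\leq n-1$ produces, with prefactor $\lambda^{n-m}$, the $T_M$-corrections of $\Phi^n(f)$, reproducing $-\Phi^n(f)$ in total.

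The main obstacle is sign bookkeeping: one must reconcile the Koszul factor $\chi(\sigma;\cdot)$, the prefactor $(-1)^{i+i(i-1)/2}/i!$ from~\eqref{Eq: twisted L infinity algebra}, and the suspension signs implicit in the identification $\frakC_{\RBA}(A)\cong \mathbf{Hom}(\mathscr{S}(\RB^\ac),\End_{sA})^{\prod}$, and then match them with the alternating signs in $\delta^n$, $\partial^{n-1}$ and $\Phi^n$. The cleanest strategy is to index each term on both sides by the same combinatorial datum --- the position of the new argument introduced by the differential, together with the pattern of $T$-insertions --- and verify the sign on a single representative of each pattern. Once these representative signs agree, the full identification of $l_1^\alpha$ with $sd^\bullet$ follows from the bijection of summands, completing the proof.
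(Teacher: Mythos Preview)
Your proposal is correct and carries out precisely the ``direct computation'' that the paper asserts but does not actually write out. The paper gives no proof of this proposition beyond the sentence preceding it; your decomposition of $l_1^\alpha$ according to which of $\alpha_\mu$, $\alpha_T$, $f$, $g$ occupies the unique $\frakC_{\Alg}$-slot in each bracket, and the subsequent identification of the three families of terms with $\delta^n f$, $-\partial^{n-1}g$, and $-\Phi^n f$, is exactly what that computation amounts to. One small correction: your claim that the brackets (I)--(III) ``admit at most one $\frakC_{\Alg}$-argument'' is literally false for (I), which is the Gerstenhaber bracket on \emph{two} elements of $\frakC_{\Alg}$; but since (I) is binary, the only contribution it makes to $l_1^\alpha$ is $l_2(\alpha_\mu,f)$, so your conclusion that the twisting sum is finite remains valid.
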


Although $\mathfrak{C}_{\RBA}(A)$ is an $L_\infty$-algebra, the next result shows that once the associative algebra structure $\mu$ over $A$ is fixed, the graded space  $\mathfrak{C}_{\RBO}(A)$, which, after twisting procedure, controls deformations of Rota-Baxter operators, is a genuine differential graded Lie algebra.

\begin{prop}\label{Prop: Cochain of operators is DGLA}
	Let $(A,\mu)$ be an associative algebra. Then the graded space  $\mathfrak{C}_{\RBO}(A)$ can be endowed with a   dg Lie algebra structure, and the set of its Maurer-Cartan elements is in bijection with the set of Rota-Baxter operators of weight $\lambda$ on $(A,\mu)$.  Given a Rota-Baxter operator $T$ on associative algebra $(A, \mu)$, the underlying complex of the twisted dg Lie algebra $\mathfrak{C}_{\RBO}(A)$ by the corresponding Maurer-Cartan element   is exactly   the cochain complex of Rota-Baxter operator $C_{\RBO}^\bullet(A)$.
\end{prop}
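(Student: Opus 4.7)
\medskip

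\noindent\textbf{Proof proposal.} The plan is to derive all three assertions from the single $L_\infty$-structure on $\mathfrak{C}_{\RBA}(A)$ established in Section~\ref{Section: Linfinty algebras}, together with general facts about twisting by Maurer-Cartan elements. First I would examine the explicit formulas (I)--(IV) for the operators $\{l_n\}$ and extract two structural features: (a) for any $n\geqslant 1$ the restriction of $l_n$ to $\mathfrak{C}_{\RBO}(A)^{\otimes n}$ vanishes (no rule produces such a term), and (b) every nonvanishing operation involving elements from $\mathfrak{C}_{\RBO}(A)$ contains \emph{exactly one} factor from $\mathfrak{C}_{\Alg}(A)$, namely an element $sh$ with $h$ of some arity $n\geqslant 1$; moreover the total number of $\mathfrak{C}_{\RBO}$-inputs is at most $n$ (by (II)(i),(II)(ii),(III)).

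Using (a) and (b) I would then analyse the twisted operations
\[l_k^{\mu}(g_1\otimes\cdots\otimes g_k)=\sum_{i\geqslant 0}\tfrac{1}{i!}(-1)^{ki+\frac{i(i-1)}{2}}\,l_{k+i}(\mu^{\otimes i}\otimes g_1\otimes\cdots\otimes g_k).\]
Since $\mu$ has arity $2$ as an element of $\mathfrak{C}_{\Alg}(A)$, feature (a) forces the terms with $i\geqslant 2$ to vanish (two or more $\mathfrak{C}_{\Alg}$-inputs together with at least one $\mathfrak{C}_{\RBO}$-input are not covered by any rule), the $i=0$ term vanishes by (a) again, and feature (b) forces the $i=1$ term to vanish whenever $k\geqslant 3$ because of the arity mismatch $k>\operatorname{arity}(\mu)=2$. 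Hence $l_k^{\mu}=0$ for all $k\geqslant 3$, and what survives is precisely $l_1^{\mu}=-l_2(s\mu,\,-\,)$ coming from rule (II)(ii) with $m=1$, and $l_2^{\mu}=l_3(s\mu,\,-\,,-)$ coming from rule (II)(i) with $n=2$. The generalised Jacobi identities for $\{l_n^{\mu}\}$, which follow automatically from the $L_\infty$-axioms of $\mathfrak{C}_{\RBA}(A)$ and the fact that $\mu$ is Maurer-Cartan in $\mathfrak{C}_{\Alg}(A)$, then collapse to the three dg Lie axioms for $(\mathfrak{C}_{\RBO}(A),l_1^{\mu},l_2^{\mu})$.

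For the Maurer-Cartan characterisation I would use the general compatibility of twisting with MC elements: $\alpha\in\mathfrak{C}_{\RBO}(A)_{-1}$ solves the MC equation in the twisted algebra if and only if $s\mu+\alpha$ solves it in the original $L_\infty$-algebra $\mathfrak{C}_{\RBA}(A)$. Combined with the proposition at the end of Section~\ref{Section: Linfinty algebras} identifying MC elements of $\mathfrak{C}_{\RBA}(A)$ with Rota-Baxter algebra structures on $A$, this immediately shows that MC elements of $(\mathfrak{C}_{\RBO}(A),l_1^{\mu},l_2^{\mu})$ correspond bijectively to Rota-Baxter operators of weight $\lambda$ on the fixed algebra $(A,\mu)$. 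The last statement follows similarly: twisting $(\mathfrak{C}_{\RBO}(A),l_1^{\mu},l_2^{\mu})$ by an MC element $T$ is the same as twisting $\mathfrak{C}_{\RBA}(A)$ by $s\mu+T$ and restricting to the $\mathfrak{C}_{\RBO}$-summand, which by Proposition~\ref{Prop: cohomlogy complex as underlying complex of L infinity algebra} is exactly $sC^\bullet_{\RBO}(A)$.

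The only nontrivial obstacle is the bookkeeping in the first paragraph: verifying that no rule of (I)--(III) produces a term $l_N(\mu^{\otimes i},g_1,\dots,g_k)$ with $i\geqslant 2$ or $(i,k)=(1,k\geqslant 3)$. This is a finite case check against the four clauses of the definition, and once completed the remaining assertions are essentially formal consequences of the $L_\infty$-twisting formalism and of the previous proposition. I would not need to re-expand the Maurer-Cartan equation by hand to recover the Rota-Baxter identity, as that identification has already been carried out implicitly in the identification $\mathcal{MC}(\mathfrak{C}_{\RBA}(A))\cong\{\text{RB-structures on }A\}$.
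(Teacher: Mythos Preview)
Your proposal is correct and follows essentially the same strategy as the paper: both twist the ambient $L_\infty$-algebra $\mathfrak{C}_{\RBA}(A)$ by the Maurer--Cartan element $(m,0)$ corresponding to the fixed associative product, observe that $\mathfrak{C}_{\RBO}(A)$ is closed under the twisted operations, and use the arity constraint ($m$ has arity $2$) to kill all $l_k^\mu$ with $k\geqslant 3$.

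The difference lies only in how the remaining two assertions are handled. The paper proceeds by \emph{explicit computation}: it writes out $l_1^\alpha(f)$ and $l_2^\alpha(f\otimes g)$ in full, expands the Maurer--Cartan equation $l_1^\alpha(\tau)-\tfrac{1}{2}l_2^\alpha(\tau\otimes\tau)=0$ by hand to recover the Rota--Baxter identity for $T=\tau\circ s$, and then computes $(l_1^\alpha)^\beta(f)$ term by term to match it with $\partial^n$. You instead invoke two general facts about the twisting procedure---that $\gamma$ is Maurer--Cartan in the $\alpha$-twist if and only if $\alpha+\gamma$ is Maurer--Cartan in the original, and that $(l^\alpha)^\beta=l^{\alpha+\beta}$---together with the identification $\mathcal{MC}(\mathfrak{C}_{\RBA}(A))\cong\{\text{RB-structures on }A\}$ and Proposition~\ref{Prop: cohomlogy complex as underlying complex of L infinity algebra}. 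Your route is shorter and avoids redundant sign-chasing, at the cost of relying on results proved elsewhere in the paper; the paper's route is more self-contained and makes the formulas for $l_1^\alpha$, $l_2^\alpha$ visible, which may be useful for a reader who wants them explicitly. Both are valid, and the structural case-check you flag as the only nontrivial obstacle is exactly the observation the paper makes in one sentence (``since the arity of $m$ is $2$, the restriction of $l_n^\alpha$ on $\mathfrak{C}_{\RBO}(A)$ is $0$ for $n\geqslant 3$'').
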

\begin{proof}Consider $A$ as graded space concentrated in degree 0. Define $m=- s\circ \mu\circ (s^{-1}\ot s^{-1}): (sA)^{\ot 2}\rightarrow sA$. Then  $\alpha=(m,0)$ is naturally a Maurer-Cartan element in $L_\infty$-algebra $\mathfrak{C}_{\RBA}(A)$.
	By the construction of   $l_n$ on $\mathfrak{C}_{\RBA}(A)$,    the graded subspace $\mathfrak{C}_{\RBO}(A)$ is closed under the action of operators $\{l_n^\alpha\}_{n\geqslant 1}$. Since the arity of $m$ is 2,  the restriction of $l_n^\alpha$ on $\mathfrak{C}_{\RBO}(A)$ is $0$ for $n\geqslant 3$. Thus $(\mathfrak{C}_{\RBO},\{l_n^\alpha\}_{n=1,2})$ forms a dg Lie algebra.
	More explicitly, for $f\in \Hom((sA)^{\ot n},A),g\in \Hom((sA)^{\ot k},A)$,
	\begin{align*}
		l_1^\alpha(f)=&-l_2(m\ot f)=-(-1)^{|f|+1}\lambda f\big\{ m\big\} =(-1)^n\lambda f\big\{ m\big\}\\
		l_2^\alpha(f\ot g)=&l_3(m\ot f\ot g)\\
		=&(-1)^{|f|}\Big(s^{-1}m\circ (sf\ot sg)-(-1)^{|f|+1}f \big\{m \big\{sg\big\} \big\}\Big)\\
		&+(-1)^{|f||g|+1+|g|}\Big(s^{-1}m\circ(sg\ot sf)-(-1)^{|g|+1}g\big\{m\big\{ sf\big\} \big\}\Big)\\
		=&(-1)^n s^{-1}m\circ (sf\ot sg)+ f \big\{m\big\{ sg\big\} \big\} \\
		&+(-1)^{nk+1+k} s^{-1}m\circ(sg\ot sf)-(-1)^{nk}g\big\{m\big\{ sf\big\}\big\} .
	\end{align*}
	
	Since $A$ is concentrated in degree 0, we have $\mathfrak{C}_{\RBO}(A)_{-1}=\Hom(sA,A)$. Take an element  $\tau\in \Hom(sA,A)_{-1}$.  Then $\tau$
	satisfies the  Maurer-Cartan equation:
	$$l_1^\alpha(\tau)-\frac{1}{2}l_2^{\alpha}(\tau\ot \tau)=0,$$
	if and only if
	$$-\lambda \tau\circ m+s^{-1}m\circ (s\tau\ot s\tau)-\tau\circ(m\big\{s\tau\big\})=0.$$
	Define $T=\tau\circ s:A\rightarrow A$. The above equation is exactly the statement  that $T$ is a Rota-Baxter operator of weight $\lambda$ on associative algebra $(A,\mu)$.

Now let $T$ be a Rota-Baxter operator on    associative algebra $(A, \mu)$. By the first statement, it corresponds to a Maurer-Cartan element $\beta$ in the dg Lie algebra $(\mathfrak{C}_{\RBA}(A), l_1^\alpha, l_2^\alpha)$. More precisely, $\beta\in \mathfrak{C}_{\RBO}(A)_{-1}=\Hom(sA,A)$ is defined to be
$\beta=T\circ s^{-1}$.
 For $f\in   \Hom((sA)^{\ot n}, A)$, we compute $(l_1^\alpha)^\beta(f)$.  In fact,
 $$\begin{array}{rcl} (l_1^\alpha)^\beta(f)&=& l_1^\alpha(f)-l_2^\alpha(\beta\ot f)\\
 &=& (-1)^n \lambda f\{m\}+s^{-1} m \circ (sf\ot s\beta)-\beta\{m\{sf\}\}\\
 &&+s^{-1}m \circ (sf\ot s\beta)+(-1)^n f\{m\{s\beta\}\}, \end{array}$$
which  corresponds to  $ \partial^{n}(\hat{f})$ as defined in Definition~\ref{Def: Cohomology theory of Rota-Baxter operators}. So
 the underlying complex of the twisted dg Lie algebra $\mathfrak{C}_{\RBO}(A)$ by the corresponding Maurer-Cartan element $\beta$   is exactly   the cochain complex of Rota-Baxter operator $C_{\RBO}^\bullet(A)$.
	
\end{proof}

\begin{remark}      The defining  equation   \eqref{Eq: Rota-Baxter relation in terms of maps} of a Rota-Baxter operator    is quadratic-linear  in the Rota-Baxter operator if we consider the associative product as part of the underlying structure. This fact seems to suggest that Rota-Baxter operators are ``relatively Koszul'' with respect to the associative product, which  may explain why the deformation complex of a Rota-Baxter operator  in  Proposition~\ref{Prop: Cochain of operators is DGLA} is a   differential graded Lie algebra instead of an $L_\infty$-algebra. However, to the best of our knowledge, there does  not exist a theory  of relative Koszul duality for operads in the literature and we are working on the project to develop such theory.  While it  is still out of reach, we give a computational proof of Proposition~\ref{Prop: Cochain of operators is DGLA} instead of a conceptual one.

\end{remark}

\medskip

\subsection{A curious example}\

We conclude this section and also this paper by an   example of Rota-Baxter algebra whose cohomology is explicitly computed.

\begin{exam}Let $A$ be the polynomial ring $\bfk[x]$ in one variable and let   $T$ be the indefinite integral operator on  $A$, i.e., $T(x^{n})=\frac{1}{n+1}x^{n+1}$ for any $n\geqslant 0$. Then the algebra $A$ endowed with the operator $T$ is a Rota-Baxter algebra of weight $0$. We will show that
\[\rH^n_{\mathrm{RBA}}(A)=0, \forall n\geqslant 0.\]

By Proposition \ref{Prop: new RB algebra}, the Rota-Baxter operator $T$ on $A$ induces a new multiplication $\star$:
	\[x^m\star x^n=\big(\frac{1}{m+1}+\frac{1}{n+1}\big)x^{m+n+1}, \forall m, n\geqslant 0.\]
	Actually, we have an isomorphism of non-unital associative algebras $$A_\star\cong t\bfk[t], x^n\mapsto \frac{1}{n+1}t^{n+1}$$
where $t\bfk[t]$ is the set of  all polynomials in a variable $t$ without    constant terms, considered as a non-unital associative algebra via the usual multiplication. %To see this, one just needs to verify the map $\phi: A_\star\rightarrow t\bfk[t], x^n\mapsto \frac{1}{n+1}t^{n+1}$ is an isomorphism of nonunital associative algebras.
 Thus   $\bfk\oplus A_\star\cong A$ as unital associative algebras. In this way, the cochain complex $\C^\bullet_{\RBO}(A)$ of the Rota-Baxter operator is exactly the normalized Hochschild cochain complex of $A$ with coefficients in the  bimodule ${}_\rhd
	A_\lhd$.  Since $A$ is of global dimension $1$,   we have
	$$\rH_{\mathrm{RBO}}^n(A)=\mathrm{H}^n_\Alg(A_\star,{_\rhd A_\lhd})\cong\left\{\begin{array}{ll} A & n=0,1\\
		0 & n\geqslant 2, \end{array}\right.$$
We need to realise these isomorphisms on the complex $\C^\bullet_{\RBO}(A)$:
$$0\to  \Hom(\bfk, {}_\rhd
	A_\lhd)\stackrel{\partial^0}{\to} \Hom(A, {}_\rhd
	A_\lhd)\stackrel{\partial^1}{\to} \Hom(A^{\otimes 2}, {}_\rhd
	A_\lhd)\to \cdots.$$
It is easy to see that  $\partial^0$ vanishes, so $\rH^0_{\mathrm{RBO}}(A)=\Hom(\bfk, {}_\rhd
	A_\lhd)\cong A$ and $\rH^1_{\mathrm{RBO}}(A)=\Ker(\partial^1)$. Let's make the isomorphism $\rH^1_{\mathrm{RBO}}(A)\cong A$ explicit.
In fact,
for any $f\in \C_{\mathrm{RBO}}^1(A)$, $\partial^1(f)=0$ if and only if $\partial(f)(x^m\ot x^n)=0$ for any $m,n\geqslant 0$. Then   $f$ must fulfill the equation
\[T(x^m)f(x^n)-T(x^mf(x^n))-f(x^{m}\star x^n)+f(x^m)T(x^n)-T(f(x^m)x^n)=0, \forall m,n\geqslant 0.\]
Taking $m=0$, we have
\[f(x^{n+1})=\frac{n+1}{n+2}\Big(xf(x^n)-T(f(x^n))+\frac{1}{n+1}f(1)x^{n+1}-T(f(1)x^n)\Big)\]
So  the map $f$ is uniquely determined by $f(1)\in A$. Conversely, starting from any element $a\in A$ and using the above inductive formula, we can define a map $f_a:A\rightarrow A$ belonging to $\Ker(\partial_1)$ via
\[   f_a(1)=  a, f_a(x^n)=x^na-nT\big(x^{n-1}a\big), \forall n\geqslant 1.\]
%Then one can check that $f_a\in \Ker(\partial^1)$.

Now we can compute the cohomology of the Rota-Baxter algebra $(A,\mu,T)$. Recall  that $\Phi^0=\Id$, so   $\rH_{\mathrm{RBA}}^0(A)=0$.
  According to {\rm Proposition~\ref{Prop: Exact sequence}}, we have the long exact sequence:
{\small \[0=\rH^0_{\mathrm{RBA}}(A)\rightarrow \mathrm{H}_\Alg^0(A)\stackrel{\Id}{\rightarrow} \rH^0_{\mathrm{RBO}}(A)\rightarrow \rH^1_{\mathrm{RBA}}(A)\rightarrow \mathrm{HH}^1(A)\stackrel{\overline{\Phi^1}}{\rightarrow} \rH^1_{\mathrm{RBO}}(A)\rightarrow \rH^2_{\mathrm{RBA}}(A)\rightarrow \mathrm{H}_\Alg^2(A)=0 \]}
where $\overline{\Phi^1}:\mathrm{HH}^1(A)\rightarrow \rH^1_{\mathrm{RBO}}(A)$ is the induced map by $\Phi^1$,
 and $\mathrm{H}_\Alg^i(A)=\rH_{\mathrm{RBO}}^i(A)=0$ for all $i\geqslant 2$, we have
$$\rH_{\mathrm{RBA}}^n(A)=\left\{\begin{array}{ll} 0& n=0\ \mbox{or}\  n\geqslant 3,\\
	\Ker\overline{\Phi^1},& n=1, \\
\mathrm{Coker}(\overline{\Phi^1}),&n=2
\end{array}\right.$$
  Let's describe $\Ker(\overline{\Phi^1})$ and $\cok(\overline{\Phi^1})$ more precisely. Recall that there is an isomorphism  \[\mathrm{H}_\Alg^1(A)=\mathrm{Der}(A)\cong A,\]
where $\mathrm{Der}(A)$ is the space of derivations on $A$. Given a derivation $d$ on $A$, it is uniquely determined by $d(x)\in A$, providing the above isomorphism $\mathrm{Der}(A)\cong A$. Given a derivation $d$ on $A$,
\[\Phi^1(d)=d\circ T-T\circ d\in \Ker(\partial^1),\]
the corresponding element in $A$ for this cocycle is
$$(d\circ T-T\circ d)(1)=d\circ T(1),$$ as $d(1)=0$. If $d\in \Ker(\overline{\Phi^1})$, we must have $d\circ T(1)=d(x)=0$, which implies that $d=0$. So $\overline{\Phi^1}$ is injective and $\rH^1_\RBA(A)=\Ker\overline{\Phi^1}=0$. And $d(x)$ can be any element of  $A$, so $\overline{\Phi^1}$ is also full and $\rH_{\mathrm{RBA}}^2(A)=\mathrm{Coker}(\overline{\Phi^1})=0$.

Therefore,  we have proven that
\[\rH^n_{\mathrm{RBA}}(A)=0,\forall n\geqslant 0.\]

Our computation shows that this Rota-Baxter algebra has no nontrivial deformations when deforming simultaneously both the associative algebra structure and the Rota-Baxter operator, although there does exist nontrivial deformations when deforming only the Rota-Baxter operator.
\end{exam}

\bigskip

 \textbf{Acknowledgements:}  This work was   supported  by the National Natural Science Foundation of China (No.  12071137), by  Key Laboratory of Ministry of Education,  by  Shanghai Key Laboratory of PMMP  (No.  22DZ2229014),   and by Fundamental Research Funds for the Central Universities.

   The authors are grateful to Jun Chen, Xiaojun Chen, Vladimir Dotsenko,  Li Guo, Yunnan Li,  Zihao Qi, Martin Markl,  Yunhe Sheng, Rong Tang etc       for many useful comments.  Apurba Das draw our attention to his papers \cite{Das21, DM20}.
   Vladimir   Dotsenko kindly pointed out  an alternative proof of Theorem~\ref{Thm: Minimal model} which is reproduced in Remark~\ref{rem: dotsenko}.  Li Guo  read carefully  part of the previous version \cite{WZ21} and gave  very detailed suggestions and  kind encouragements. Yunhe Sheng kindly provided some comments about the organisation of this paper.
  We are very grateful to these researchers for their interests and comments.

  We would like to express our sincere gratitude to an anonymous referee for her/his comments which   led  to a substantial revision of   the paper.
  
 Monsieur Le Professeur Alexander Zimmermann a eu 60 ans au d\'ebut de l'ann\'ee 2024. Le deuxi\`eme auteur est tr\`es heureux d'avoir cette opportunit\'e de lui exprimer sa plus sinc\`ere gratitude pour  sauver sa carri\`ere math\'ematique il y a 19 ans. On soulahite bonne chant\'e et bonne continuation dans les ann\'ees \`a venir pour Monsieur Zimmermann.

 %\medskip

%\textbf{Conflict of Interest}

%None of the authors has any conflict of interest in the conceptualization or publication of this
%work.

%\textbf{Data availability}

%Data sharing is not applicable to this article as no new data were created or analyzed in this study.

\bigskip

\end{document}